\DeclareMathAlphabet{\mathpzc}{OT1}{pzc}{m}{it}
\numberwithin{equation}{section}
\def\eqnarray{\stepcounter{equation}\let\@currentlabel=\theequation
\global\@eqnswtrue
\tabskip\@centering\let\\=\@eqncr
$$\halign to \displaywidth\bgroup\hfil\global\@eqcnt\z@
  $\displaystyle\tabskip\z@{##}$&\global\@eqcnt\@ne
  \hfil$\displaystyle{{}##{}}$\hfil
  &\global\@eqcnt\tw@ $\displaystyle{##}$\hfil
  \tabskip\@centering&\llap{##}\tabskip\z@\cr}
\def\endeqnarray{\@@eqncr\egroup
      \global\advance\c@equation\m@ne$$\global\@ignoretrue}
\newtheorem{theorem}{Theorem}[section]
\newtheorem{definition}[theorem]{Definition}
\newtheorem{lemma}[theorem]{Lemma}
\newtheorem{proposition}[theorem]{Proposition}
\newtheorem{assumption}[theorem]{Assumption}
\newtheorem{remark}[theorem]{Remark}
\numberwithin{equation}{section}
\def\Omc{\mathbb{R}^N\setminus\Omega}
\def\RR{{\mathbb{R}}}
\def\NN{{\mathbb{N}}}
\def\Om{\Omega}
\def\bOm{\overline{\Om}}
\def\pOm{\partial\Omega}
\title{External optimal control of fractional parabolic PDEs}
\author{Harbir Antil}
\address{Department of Mathematical Sciences, George Mason University, Fairfax, VA 22030, USA.}
\email{hantil@gmu.edu}
\author{Deepanshu Verma}
\address{Department of Mathematical Sciences, George Mason University, Fairfax, VA 22030, USA.}
\email{dverma2@gmu.edu}
\author{Mahamadi Warma}
\address{University of Puerto Rico  (Rio Piedras Campus), College of Natural Sciences,
Department of Mathematics, PO Box 70377 San Juan PR
00936-8377 (USA). }
\email{mahamadi.warma1@upr.edu, mjwarma@gmail.com}
\thanks{The first and second authors are partially supported by NSF grants DMS-1521590, DMS-1818772 and the Air Force Office of Scientific Research under Award NO: FA9550-19-1-0036. 
The third author is partially supported by the Air Force Office of Scientific Research 
under Award NO:  FA9550-18-1-0242}
\keywords{Parabolic PDEs, Fractional Laplacian, weak and very-weak solutions, Dirichlet, Neumann, and Robin external control problems.}
\subjclass[2010]{49J20, 49K20, 35S15, 65R20, 65N30}
\begin{document}

\begin{abstract}
In this paper we introduce a new notion of optimal control, or source identification in inverse, problems with fractional parabolic PDEs as constraints. This new notion allows a  source/control placement outside the domain where the PDE is fulfilled. We tackle the Dirichlet, the Neumann and the Robin cases. For the fractional elliptic PDEs this has been recently investigated by the authors in \cite{HAntil_RKhatri_MWarma_2018a}. 
The need for these novel optimal control concepts stems  from the fact that the classical PDE models only allow placing the source/control either on the boundary or in the interior where the PDE is satisfied. However, the nonlocal behavior of the fractional operator now allows placing the control in the exterior. We introduce the notions of weak and very-weak solutions to the parabolic Dirichlet problem. We present an approach on how to approximate the parabolic  Dirichlet solutions by the parabolic Robin solutions (with convergence rates). A complete analysis for the Dirichlet and Robin optimal control problems has been discussed. The numerical examples confirm our theoretical findings and further illustrate the potential 
benefits of nonlocal models over the local ones. 
\end{abstract}

\maketitle

\section{Introduction}

Let $\Om \subset \RR^N$, $N \ge 1$, be a bounded open set with boundary $\pOm$. Consider the Banach spaces $(Z_D,U_D)$ and $(Z_{R},U_R)$, where the subscripts $D$ and $R$ denote Dirichlet and Robin. The goal of this paper is to study the following parabolic external optimal control 
(or source identification) problems: 
\begin{itemize}
 \item \emph{\bf Fractional parabolic Dirichlet exterior control (source identification) problem:} Given $\xi \ge 0$
 a constant penalty parameter we consider the minimization problem:
 \begin{subequations}\label{eq:dcp}
 \begin{equation}\label{eq:Jd}
    \min_{(u,z)\in (U_D,Z_D)} J(u) + \frac{\xi}{2} \|z\|^2_{Z_D} ,
 \end{equation}
 subject to the fractional parabolic Dirichlet exterior value problem: Find $u \in U_D$ solving 
 \begin{equation}\label{eq:Sd}
 \begin{cases}
    \partial_t u + (-\Delta)^s u = 0 \quad &\mbox{in } Q:=(0,T) \times \Om, \\
                u = z           \quad &\mbox{in } \Sigma := (0,T) \times (\Omc),  \\
                u(0,\cdot)  = 0 \quad &\mbox{in } \Om , 
 \end{cases}                
 \end{equation} 
 and the control constraints 
 \begin{equation}\label{eq:Zd}
    z \in Z_{ad,D} ,
 \end{equation}
 \end{subequations}
 with $Z_{ad,D} \subset Z_D$ being a closed and convex subset. \\

\item \emph{\bf Fractional parabolic Robin exterior control (source identification) problem:} Given $\xi \ge 0$
 a constant penalty parameter we consider the minimization problem
 \begin{subequations}\label{eq:ncp}
 \begin{equation}\label{eq:Jn}
    \min_{(u,z)\in (U_R, Z_R)} J(u) + \frac{\xi}{2} \|z\|^2_{Z_R} ,
 \end{equation}
 subject to the fractional parabolic Robin exterior value problem: Find $u \in U_R$ solving 
 \begin{equation}\label{eq:Sn}
 \begin{cases}
    \partial_t u + (-\Delta)^s u = 0 \quad &\mbox{in } Q, \\
    \mathcal{N}_s u  +\kappa u = \kappa z \quad &\mbox{in } \Sigma, \\
    u(0,\cdot) = 0 \quad &\mbox{in } \Om ,
 \end{cases}                
 \end{equation} 
 and the control constraints 
 \begin{equation}\label{eq:Zn}
    z \in Z_{ad,R} ,
 \end{equation}
 \end{subequations}
 with $Z_{ad,R} \subset Z_R$ being a closed and convex subset. In \eqref{eq:Sn}, $\mathcal N_su$ denotes the interaction operator and 
is given in \eqref{NLND} below, $\kappa\in L^1(\Omc)\cap L^\infty(\Omc)$ and is non-negative. We notice that the latter assumption is not a restriction since otherwise we can replace $\kappa$ throughout by $|\kappa|$.
 \end{itemize}
Notice that \eqref{eq:Sn} is a generalized exterior value problem and all the details (with minor modifications) transfer to the case when instead of 
$\mathcal{N}_s u  +\kappa u = \kappa z \quad \mbox{in } \Sigma$ we consider 
$\mathcal{N}_s u  =  \kappa z \quad \mbox{in } \Sigma$, where $z$ denotes the control/source. The resulting optimal control problem is the parabolic Neumann exterior control problem. We mention that,  we can also deal with the following more general system:
\begin{equation*}
 \begin{cases}
    \partial_t u + (-\Delta)^s u = f \quad &\mbox{in } Q, \\
    \mathcal{N}_s u  +\kappa u = \kappa z \quad &\mbox{in } \Sigma, \\
    u(0,\cdot) = u_0 \quad &\mbox{in } \Om . 
 \end{cases}                
 \end{equation*} 
 In fact, one has to decompose the solution $u$ of the above system as $u=u_1+u_2$, where $u_1$ satisfies  \eqref{eq:Sn} and $u_2$ solves the system
\begin{equation*}
 \begin{cases}
    \partial_t u_2 + (-\Delta)^s u_2 = f \quad &\mbox{in } Q, \\
    \mathcal{N}_s u_2  +\kappa u_2 =0 \quad &\mbox{in } \Sigma, \\
    u_2(0,\cdot) = u_0 \quad &\mbox{in } \Om ,
 \end{cases}                
 \end{equation*} 
 and use some semigroups method (since in that case $u_2$ is given by  a semigroup).

The classical parabolic models, such as diffusion equation, are too restrictive. They 
only allow a source/control placement either inside the domain $\Om$ or on the boundary of 
the domain $\partial\Om$. Notice that in both \eqref{eq:Sd} and \eqref{eq:Sn} the 
source/control 
$z$ is placed in the exterior domain $\Omc$, disjoint from $\Om$. This is not possible
using the classical models. The authors in \cite{HAntil_RKhatri_MWarma_2018a} have
recently introduced the notion of exterior optimal control with elliptic fractional PDEs as 
constraints. The current paper develops a complete theoretical framework for the parabolic case. 
The paper \cite{HAntil_RKhatri_MWarma_2018a} has been inspired by the work of M. Warma \cite{warma2018approximate} where the author has shown that the classical notion of 
controllability (for fractional PDEs) from the boundary does not make sense and therefore 
it must be replaced by a control that is localized outside the open set where the PDE is
solved. For completeness, we would like to mention that the authors have recently considered 
the case where the source/control is located in the interior \cite{antil2017optimal}, see also \cite{antil2018b,antil2016optimal} for the case when the source/control is the diffusion coefficient. We also mention the works on the interior control in case of the so-called spectral fractional Laplacian \cite{AnPfWa2017, antil2017optimal} and for boundary control see \cite{antil2017fractional}. We also mention some interesting but (not directly related) works on fractional 
Calder\'on type inverse problems \cite{GLX17,LL17,RS17}. Notice that fractional operators
further provide flexibility to approximate arbitrary functions \cite{CDV18, DSV16, Grub, Kry18}.
 
The key difficulties and novelties of this paper are as follows:
\begin{enumerate}[(i)]
\item {\bf Nonlocal diffusion operator and exterior conditions.} The fractional Laplacian 
      $(-\Delta)^s$ is a   
      nonlocal operator and its evaluation at a point requires information over the entire        
      $\RR^N$. In addition, $(-\Delta)^s u$ may be nonsmooth even if $u$ is smooth
      (see e.g. \cite[Remark 7.2]{RS-DP}). Moreover, we do not have the notion of boundary conditions, 
      but the exterior conditions on $\Omc$. 

\item {\bf Nonlocal normal derivative.} 
      $\mathcal{N}_s u$ is the nonlocal normal derivative
      of $u$. This can be thought of as a restricted fractional Laplacian in $\Omc$. It is a very 
      difficult object to handle both at the continuous and at the discrete levels. Indeed, 
      the best known regularity result for $\mathcal{N}_s$ is given in Lemma~\ref{lem:Nmap}
      which says that globally $\mathcal{N}_s u \in L^2(\Omc)$ for $u \in W^{s,2}(\RR^N)$. Higher
      regularity results are currently unknown.

          \item {\bf Approximation of Dirichlet problem by Robin.}
                In case of the parabolic Dirichlet problem \eqref{eq:dcp}, it is 
                imperative to deal 
      with $\mathcal{N}_s$. Indeed, we need to approximate the very-weak solution to the 
      parabolic Dirichlet problem \eqref{eq:Sd} which requires computing $\mathcal{N}_s$
      of the test functions (see \eqref{eq:vw_d}). Moreover, the optimality system 
      for the parabolic Dirichlet control problem  \eqref{eq:dcp} requires an approximation of
      the $\mathcal{N}_s$ of the adjoint variable (see \eqref{eq:dirfOC}).
      We circumvent the first difficulty by approximating the parabolic Dirichlet problem
      \eqref{eq:Sd} by a parabolic Robin problem. We also prove a rate of convergence for
      this approximation. Under this new setup, the first order optimality conditions do
      not require an approximation of the $\mathcal{N}_s$ of the adjoint variable.

\item {\bf Weak and very-weak solutions.} We study the notion of weak-solutions to the 
      parabolic Dirichlet problem \eqref{eq:Sd} which require a higher regularity on the 
      datum $z \in H^1((0,T);W^{s,2}(\Omc))$. 
      Since for the control problem \eqref{eq:dcp} we only assume that 
      $Z_D := L^2((0,T);L^2(\Omc))$, therefore we also develop an even weaker notion of 
      solutions to \eqref{eq:Sd}. We call it very-weak solutions.       
      We also develop the notion of weak-solutions to the
      Robin problem \eqref{eq:Sn} and prove their existence and uniqueness. 
      
\item {\bf Optimal control problems.} We establish the well-posedness of solutions to 
      both parabolic Dirichlet and the parabolic Robin control problems. 
\end{enumerate}

Models with fractional derivatives are becoming increasing popular which can be attributed to their role in many applications. These models appear in (but not limited to) image denoising, image segmentation and phase field modeling \cite{antil2017spectral,HAntil_SBartels_GDogan_2019a,HAntil_CNRautenberg_2018b}; data analysis and fractional diffusion maps \cite{antil2018fractional}; magnetotellurics (geophysics) \cite{CWeiss_BvBWaanders_HAntil_2018a}. 

In many realistic applications, the source/control is placed outside the domain where a PDE is fulfilled. Some examples of problems where this \emph{may be of relevance} are:
(a) Magnetic drug delivery: the drug with ferromagnetic particles is injected in the
body and external magnetic field is used to steer it to a desired location \cite{HAntil_RHNochetto_PVenegas_2018a, HAntil_RHNochetto_PVenegas_2018b,lubbe1996clinical}; 
(b) Acoustic testing: the aerospace structures are subjected to sound from the loudspeakers
\cite{larkin1999direct}.

The rest of the paper is organized as follows. We begin with Section~\ref{s:not} which introduces the notations and some preliminary results. The content of this section is well-known. Our main work starts from Section~\ref{s:state} where we first study the notion of weak and very weak solutions to the parabolic Dirichlet problem in Section~\ref{s:dbvp}. This is followed by the notion of weak solution to the Robin problem in Section~\ref{s:rbvp}. The emphasis of Section~\ref{s:exctrp} is on the parabolic Dirichlet and the parabolic Robin optimal control problems. In Section~\ref{s:extapp}, we discuss the approximation of the parabolic Dirichlet problem and parabolic Dirichlet control problem by the parabolic Robin ones. Finally, 
in Section~\ref{s:numerics} we discuss the numerical approximations of all the problems. The numerical experiments confirm our theoretical estimates. The experiments on the control/source identification problem illustrate the strength of nonlocal approach over the local ones.

\section{Notation and Preliminaries}\label{s:not}

The purpose of this section is to introduce the notations and some
preliminary results. The results of this section are well-known. 
We follow the notation from \cite{HAntil_RKhatri_MWarma_2018a, warma2018approximate}.
Unless otherwise stated, $\Om \subset\RR^N$ ($N \ge 1$) is a bounded  open set and $0 < s < 1$. Let 
 \[
    W^{s,2}(\Om) := \left\{ u \in L^2(\Om) \;:\; 
            \int_\Om\int_\Om \frac{|u(x)-u(y)|^2}{|x-y|^{N+2s}}\;dxdy < \infty \right\} ,
 \]
and we endow it with the norm defined by

 \[
    \|u\|_{W^{s,2}(\Om)} := \left(\int_\Om |u|^2\;dx 
        + \int_\Om\int_\Om  \frac{|u(x)-u(y)|^2}{|x-y|^{N+2s}}\;dxdy \right)^{\frac12}.      
 \]
 
In order to study the Dirichlet problem \eqref{eq:Sd} we also need to define 
 \[
    W^{s,2}_0(\overline\Om) := \left\{ u \in W^{s,2}(\RR^N) \;:\; u = 0 \mbox{ in } 
            \RR^N\setminus\Om \right\} . 
 \]
In this case 
\begin{align*}
\|u\|_{W_0^{s,2}(\bOm)}:=\left(\int_{\RR^N}\int_{\RR^N}\frac{|u(x)-u(y)|^2}{|x-y|^{N+2s}}\;dxdy\right)^{\frac 12}
\end{align*}
defines an equivalent norm on $W_0^{s,2}(\bOm)$. 

The dual spaces of $W^{s,2}(\RR^N)$ and $W_0^{s,2}(\overline\Om)$ are denoted by $W^{-s,2}(\RR^N)$ and $W^{-s,2}(\overline\Om)$, respectively. 
Moreover, $\langle\cdot,\cdot\rangle$ shall denote their duality pairing whenever it is clear from the context. 
  
The local fractional order Sobolev space is defined as
 \begin{equation}\label{eq:Ws2loc}
    W^{s,2}_{\rm loc}(\Omc) := \left\{ u \in L^2(\Omc) \;:\; u\varphi \in W^{s,2}(\Omc), 
         \ \forall \ \varphi \in \mathcal{D}(\Omc) \right\}.  
 \end{equation}
 
 If $s=1$, then we shall denote $W^{1,2}(\Omega)$ by $H^1(\Omega)$.

Finally, we are ready to introduce the fractional Laplace operator. 
We set 
\begin{equation*}
\mathbb{L}_s^{1}(\RR^N):=\left\{u:\RR^N\rightarrow
\mathbb{R}\;\mbox{
measurable, }\;\int_{\RR^N}\frac{|u(x)|}{(1+|x|)^{N+2s}}%
\;dx<\infty \right\} ,
\end{equation*}%
and for $u\in \mathbb{L}_s^{1}(\RR^N)$ and $\varepsilon >0$, we let
\begin{equation*}
(-\Delta )_{\varepsilon }^{s}u(x)=C_{N,s}\int_{\{y\in \RR^N,|y-x|>\varepsilon \}}
\frac{u(x)-u(y)}{|x-y|^{N+2s}}dy,\;\;x\in\RR^N,
\end{equation*}%
where the normalized constant $C_{N,s}$ is given by
\begin{equation}\label{CN}
C_{N,s}:=\frac{s2^{2s}\Gamma\left(\frac{2s+N}{2}\right)}{\pi^{\frac
N2}\Gamma(1-s)},
\end{equation}%
and $\Gamma $ is the usual Euler Gamma function (see, e.g. \cite%
{BCF,Caf3,Caf1,Caf2,NPV,War-DN1,War}). Then the {\bf fractional Laplacian} 
$(-\Delta )^{s}$ is defined for $u\in \mathbb{L}_s^{1}(\RR^N)$  by the formula

\begin{align}
(-\Delta )^{s}u(x)=C_{N,s}\mbox{P.V.}\int_{\RR^N}\frac{u(x)-u(y)}{|x-y|^{N+2s}}dy 
=\lim_{\varepsilon \downarrow 0}(-\Delta )_{\varepsilon
}^{s}u(x),\;\;x\in\RR^N,\label{eq11}
\end{align}%
provided that the limit exists. We remark that it has been shown in \cite[Proposition 2.2]{BPS} that for $u \in \mathcal{D}(\Om)$, we have 
that
 \[
    \lim_{s\uparrow 1^-}\int_{\RR^N} u (-\Delta)^su\;dx 
        = \int_{\RR^N} |\nabla u|^2 dx 
        = - \int_{\RR^N} u \Delta u\;dx 
        = - \int_{\Om} u \Delta u\;dx.
 \]
This is where the constant $C_{N,s}$ plays a crucial role.

Now, we define the operator $(-\Delta)_D^s$ in $L^2(\Omega)$ as follows. 

\begin{equation}
D((-\Delta)_D^s)=\Big\{u|_{\Om},\; u\in W_0^{s,2}(\bOm):\; (-\Delta)^su\in L^2(\Omega)\Big\},\;\;(-\Delta)_D^s(u|_{\Om})=(-\Delta)^su\;\mbox{ in }\;\Omega.
\end{equation} 
Then $(-\Delta)_D^s$ is the realization in $L^2(\Om)$ of the fractional Laplace operator $(-\Delta)^s$ with the Dirichlet exterior condition $u=0$ in $\RR^N\setminus\Omega$. The following result is well-known (see e.g. \cite{BWZ,SV2}).  

\begin{proposition}\label{semigroup}
The operator $(-\Delta)_D^s$ has compact resolvent and $-(-\Delta)_D^s$ generates  a strongly continuous semigroup $(e^{-t(-\Delta)_D^s})_{t\ge 0}$ on $L^2(\Omega)$.
\end{proposition}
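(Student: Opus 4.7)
The plan is to identify $(-\Delta)_D^s$ as the self-adjoint operator associated with a closed, densely defined, symmetric, non-negative bilinear form on $L^2(\Omega)$, and then read off the semigroup and compact-resolvent properties from standard form theory.

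First, I would introduce the quadratic form
\begin{equation*}
\mathcal{E}(u,v) := \frac{C_{N,s}}{2}\int_{\RR^N}\int_{\RR^N}\frac{(u(x)-u(y))(v(x)-v(y))}{|x-y|^{N+2s}}\,dx\,dy,
\end{equation*}
with domain $D(\mathcal{E}) = W_0^{s,2}(\overline{\Omega})$. Since $\mathcal{D}(\Omega) \subset W_0^{s,2}(\overline{\Omega})$ and $\mathcal{D}(\Omega)$ is dense in $L^2(\Omega)$, $\mathcal{E}$ is densely defined. The form is manifestly symmetric and non-negative, and its closedness is equivalent to the fact that $W_0^{s,2}(\overline{\Omega})$, endowed with the equivalent norm recalled above, is a Hilbert space. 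By the Kato representation theorem for closed, densely defined, symmetric, non-negative forms, there is a unique self-adjoint non-negative operator $A$ on $L^2(\Omega)$ whose form is $\mathcal{E}$.

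Next, I would verify that $A = (-\Delta)_D^s$. For $u\in D(A)$ with $Au = f\in L^2(\Omega)$, the identity $\mathcal{E}(u,\varphi) = \int_\Omega f\varphi\,dx$ for all $\varphi \in \mathcal{D}(\Omega)$ together with the pointwise distributional identity $\mathcal{E}(u,\varphi) = \langle (-\Delta)^s u,\varphi\rangle$ (obtainable after a symmetrisation argument, which holds since $u$ is extended by zero outside $\Omega$ and thus lies in $\mathbb{L}_s^1(\RR^N)$) implies $(-\Delta)^s u = f$ in $\mathcal{D}'(\Omega)$, and hence in $L^2(\Omega)$, so $u \in D((-\Delta)_D^s)$. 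The reverse inclusion is analogous. With $A = (-\Delta)_D^s$, the theory of analytic semigroups generated by non-negative self-adjoint operators (equivalently, the spectral theorem) immediately gives that $-(-\Delta)_D^s$ generates the strongly continuous (in fact analytic) contraction semigroup $(e^{-t(-\Delta)_D^s})_{t\ge 0}$ on $L^2(\Omega)$.

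Finally, for the compact resolvent, I would use that for $\Omega$ bounded the embedding $W_0^{s,2}(\overline{\Omega}) \hookrightarrow L^2(\Omega)$ is compact (a standard Rellich-type theorem for fractional Sobolev spaces, which follows from the usual Rellich embedding on a bounded open set containing $\Omega$). For $\lambda > 0$, the resolvent $(\lambda I + (-\Delta)_D^s)^{-1}$ maps $L^2(\Omega)$ continuously into $D((-\Delta)_D^s) \hookrightarrow W_0^{s,2}(\overline{\Omega})$, and composing with the compact embedding back into $L^2(\Omega)$ yields compactness of the resolvent.

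The main technical point is the identification step $A=(-\Delta)_D^s$: one has to justify, via symmetrisation of the double integral together with the zero extension outside $\Omega$, that the form action coincides with the distributional action of the singular integral $(-\Delta)^s$ appearing in \eqref{eq11}. The semigroup and the compact resolvent conclusions are then immediate consequences of the Hilbert space structure and the compact Sobolev embedding.
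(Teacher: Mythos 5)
The paper does not actually prove this proposition; it simply declares it well-known and cites \cite{BWZ,SV2}. Your argument is the standard one those references rely on, and it is correct: you realize $(-\Delta)_D^s$ as the self-adjoint operator associated with the closed, densely defined, symmetric, non-negative form $\mathcal{E}$ on $W_0^{s,2}(\overline\Omega)$, obtain the strongly continuous (indeed analytic, contractive) semigroup from the spectral theorem, and get compactness of the resolvent from the compact embedding $W_0^{s,2}(\overline\Omega)\hookrightarrow L^2(\Omega)$ (which holds for bounded $\Omega$ with no boundary regularity, since functions in $W_0^{s,2}(\overline\Omega)$ extend by zero to $W^{s,2}$ of a larger ball). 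The one step you flag as the ``main technical point''---identifying the form operator with $(-\Delta)_D^s$ as defined in the paper---is exactly what the paper's integration-by-parts formula, Proposition~\ref{prop:prop}, delivers: for $u,v\in W_0^{s,2}(\overline\Omega)$ the region $(\RR^N\setminus\Omega)^2$ contributes nothing to the double integral and the boundary term $\int_{\Omc}v\,\mathcal{N}_s u\,dx$ vanishes, so $\mathcal{E}(u,v)=\int_\Omega v(-\Delta)^s u\,dx$ whenever $(-\Delta)^s u\in L^2(\Omega)$, and the converse follows by testing with $\varphi\in\mathcal{D}(\Omega)$. So your proof is complete and matches the expected route.
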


Next, for $u \in W^{s,2}(\RR^N)$ we define the nonlocal normal derivative $\mathcal{N}_s$ as follows:
 \begin{align}\label{NLND}
    \mathcal{N}_s u(x) := C_{N,s} \int_\Om \frac{u(x)-u(y)}{|x-y|^{N+2s}}\;dy, 
            \quad x \in \RR^N \setminus \overline\Om . 
 \end{align}
 We shall call $\mathcal N_s$ the {\em interaction operator}. Notice that the origin of the  
 term ``interaction" goes back to \cite{QDu_MGunzburger_RBLehoucq_KZhou_2013a}.
Clearly $\mathcal{N}_s$ is a nonlocal operator and it is well defined on $W^{s,2}(\RR^N)$
as we discuss next.

 \begin{lemma}\label{lem:Nmap}
  The interaction operator $\mathcal{N}_s$ maps continuously $W^{s,2}(\RR^N)$ into
 $ W^{s,2}_{\rm loc}(\RR^N\setminus\Om)$. 
  As a result, if  $u \in W^{s,2}(\RR^N)$, then $\mathcal{N}_s u \in L^2(\RR^N\setminus\Om)$.
 \end{lemma}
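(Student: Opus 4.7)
The plan is to split the proof into two layers: the local $W^{s,2}$-continuity statement (which verifies the seminorm condition in the definition of $W^{s,2}_{\loc}$) and the global $L^2(\mathbb R^N\setminus\Omega)$-bound (which is the remaining condition). Both rest on the elementary decomposition, for $x\in\mathbb R^N\setminus\overline\Omega$,
\begin{equation*}
\mathcal N_s u(x)=C_{N,s}\bigl(A(x)u(x)-Bu(x)\bigr),\quad A(x):=\int_\Omega\frac{dy}{|x-y|^{N+2s}},\quad Bu(x):=\int_\Omega\frac{u(y)\,dy}{|x-y|^{N+2s}}.
\end{equation*}

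For the local part, I would fix a test function $\varphi\in\mathcal D(\mathbb R^N\setminus\overline\Omega)$, set $K:=\operatorname{supp}\varphi$ and $d:=\operatorname{dist}(K,\Omega)>0$. Because $|x-y|\geq d$ on $K\times\Omega$, the kernel $|x-y|^{-(N+2s)}$ together with all of its $x$-derivatives is uniformly bounded there. Differentiation under the integral sign then yields $A\in C^\infty$ in a neighborhood of $K$ with bounds depending only on $d$ and $|\Omega|$, while the linear map $u\mapsto Bu$ is continuous from $L^2(\Omega)$ into $C^\infty$ of such a neighborhood; in particular $\|Bu\|_{W^{s,2}(K)}\leq C(d,K,\Omega)\|u\|_{L^2(\Omega)}$. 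The standard multiplier estimate (smooth bounded function times a $W^{s,2}$ function) yields $\|Au\|_{W^{s,2}(K)}\leq C(d,K,\Omega)\|u\|_{W^{s,2}(K)}$. Combining the two gives $\|\mathcal N_s u\|_{W^{s,2}(K)}\leq C\|u\|_{W^{s,2}(\mathbb R^N)}$, and multiplying by $\varphi$ followed by extension by zero delivers $\varphi\,\mathcal N_s u\in W^{s,2}(\mathbb R^N\setminus\Omega)$ with a continuous bound on $\|u\|_{W^{s,2}(\mathbb R^N)}$.

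For the global $L^2(\mathbb R^N\setminus\Omega)$-bound, I would exploit the nonlocal Green-type identity
\begin{equation*}
\int_{\mathbb R^N\setminus\Omega}v\,\mathcal N_s u\,dx=\frac{C_{N,s}}{2}\iint_{\mathbb R^{2N}\setminus(\mathbb R^N\setminus\Omega)^2}\frac{(u(x)-u(y))(v(x)-v(y))}{|x-y|^{N+2s}}\,dxdy,
\end{equation*}
first derived for $u,v\in C_c^\infty(\mathbb R^N)$ by direct Fubini and then extended by density to $u,v\in W^{s,2}(\mathbb R^N)$ with $v\equiv 0$ on $\Omega$. Cauchy--Schwarz controls the right-hand side by $C\|u\|_{W^{s,2}(\mathbb R^N)}\|v\|_{W^{s,2}(\mathbb R^N)}$; testing against suitable $W^{s,2}(\mathbb R^N)$-liftings of compactly supported $L^2(\mathbb R^N\setminus\Omega)$-functions (that vanish on $\Omega$) and carefully handling the boundary layer near $\partial\Omega$ upgrades this duality inequality to the desired $L^2$-estimate. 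This global bound is the main obstacle: near $\partial\Omega$, $A(x)\sim\operatorname{dist}(x,\partial\Omega)^{-2s}$, so neither $Au$ nor $Bu$ is individually in $L^2$, and the $L^2$-integrability of $\mathcal N_s u$ rests precisely on the cancellation in $Au-Bu$ that the Green identity encodes; away from $\partial\Omega$, all of the estimates above follow routinely from the smoothness of the kernel.
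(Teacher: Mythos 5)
Your local argument is sound: the decomposition $\mathcal N_s u = C_{N,s}(Au - Bu)$, differentiation under the integral sign to see that $A$ and $y\mapsto Bu(y)$ are smooth with bounds depending only on $\operatorname{dist}(K,\Omega)$ and $|\Omega|$ on any compact $K\subset\RR^N\setminus\overline\Omega$, and the Lipschitz-multiplier property of $W^{s,2}$ together yield $\varphi\,\mathcal N_su\in W^{s,2}(\Omc)$ for every $\varphi\in\mathcal D(\RR^N\setminus\overline\Omega)$, with the $W^{s,2}$-norm controlled by $\|u\|_{W^{s,2}(\RR^N)}$. This correctly handles the local seminorm condition away from $\partial\Omega$. (The paper itself gives no proof of this lemma, so there is no authorial route to compare against.)

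The global $L^2(\Omc)$ estimate, which the second sentence of the lemma --- and in fact the paper's own definition of $W^{s,2}_{\rm loc}(\Omc)$, which requires $L^2(\Omc)$ membership --- demands, is not established. You correctly locate all of the difficulty in the cancellation between $Au$ and $Bu$ near $\partial\Omega$, but the duality route you sketch cannot close. From the Green identity and Cauchy--Schwarz one obtains
\begin{equation*}
\left|\int_{\Omc} v\,\mathcal N_s u\,dx\right|\le C\,\|u\|_{W^{s,2}(\RR^N)}\,\|v\|_{W^{s,2}(\RR^N)}
\end{equation*}
only for $v\in W^{s,2}(\RR^N)$ with $v\equiv 0$ on $\Omega$. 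To convert this into $\|\mathcal N_s u\|_{L^2(\Omc)}\le C\|u\|_{W^{s,2}(\RR^N)}$ by duality you would need the restrictions $v|_{\Omc}$ of such $v$ to exhaust $L^2(\Omc)$ while satisfying $\|v\|_{W^{s,2}(\RR^N)}\le C\|v|_{\Omc}\|_{L^2(\Omc)}$. That is impossible: the extension of a generic $g\in L^2(\Omc)$ by zero across $\partial\Omega$ does not belong to $W^{s,2}(\RR^N)$ (zero extension already fails for $s\ge 1/2$, and even for $s<1/2$ the $W^{s,2}$-norm of the extension is not controlled by $\|g\|_{L^2}$). What the duality actually gives is that $\mathcal N_s u$ lies in the dual of the trace space $\{v|_{\Omc} : v\in W^{s,2}(\RR^N),\ v|_{\Omega}=0\}$, which strictly contains $L^2(\Omc)$. ``Carefully handling the boundary layer'' is exactly what is missing and it is the whole content of the hard half of the lemma: since $A(x)\sim\operatorname{dist}(x,\partial\Omega)^{-2s}$, the claimed $L^2$-membership rests on a quantitative pointwise or weighted-Hardy-type estimate on $u(x)-u(y)$ for $x$ near $\partial\Omega$ and $y\in\Omega$, which the Green identity alone does not produce. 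As written, the harder half of the statement is asserted rather than proved.
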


Despite the fact that $\mathcal{N}_s$ is defined on $\RR^N \setminus \Om$, it
is still known as the ``normal" derivative. This is due to its similarity with the classical normal derivative \cite[Proposition~2.2]{HAntil_RKhatri_MWarma_2018a}. We conclude this section by stating the integration by parts formula for the fractional Laplacian (see e.g. \cite{SDipierro_XRosOton_EValdinoci_2017a}). 

 \begin{proposition}[\bf The integration by parts formula for $(-\Delta)^s$]
 \label{prop:prop}
 Let $u \in W^{s,2}(\RR^N)$ be such that $(-\Delta)^su \in L^2(\Omega)$. 
 Then for every $v\in W^{s,2}(\RR^N)$ we have that
      \begin{align}\label{Int-Part}
       \frac{C_{N,s}}{2} 
        \int\int_{\RR^{2N}\setminus(\RR^N\setminus\Om)^2} 
         \frac{(u(x)-u(y))(v(x)-v(y))}{|x-y|^{N+2s}} \;dxdy 
        = \int_\Om v(-\Delta)^s u\;dx + \int_{\RR^N\setminus\Om} v\mathcal{N}_s u\;dx ,
      \end{align}
      where $\RR^{2N}\setminus(\RR^N\setminus\Om)^2
       := (\Om\times\Om)\cup(\Om\times(\RR^N\setminus\Om))\cup((\RR^N\setminus\Om)\times\Om)$.       
 \end{proposition}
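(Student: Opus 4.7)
The plan is to start from the right-hand side, unfold the definitions of $(-\Delta)^s$ and $\mathcal{N}_s$, and then recombine the resulting three iterated integrals by renaming variables so as to produce the symmetrized Gagliardo-type integrand on the left. Writing $\Omega^c := \mathbb{R}^N\setminus\Omega$ for brevity, I would begin by decomposing the principal-value integral defining $(-\Delta)^s u(x)$ for $x\in\Omega$ as the sum of an integral over $\Omega$ and one over $\Omega^c$, producing
\begin{equation*}
\int_\Omega v(-\Delta)^s u\,dx \;=\; C_{N,s}\,\mathrm{P.V.}\!\iint_{\Omega\times\Omega}\!\!\frac{v(x)(u(x)-u(y))}{|x-y|^{N+2s}}\,dx\,dy \;+\; C_{N,s}\!\iint_{\Omega\times\Omega^c}\!\!\frac{v(x)(u(x)-u(y))}{|x-y|^{N+2s}}\,dx\,dy,
\end{equation*}
and likewise writing $\int_{\Omega^c} v\mathcal{N}_s u\,dx$ as the iterated integral over $\Omega^c\times\Omega$ from \eqref{NLND}.

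Next, for the $\Omega\times\Omega$ piece I would use the standard symmetrization: renaming $x\leftrightarrow y$ gives
\begin{equation*}
\mathrm{P.V.}\!\iint_{\Omega\times\Omega}\!\!\frac{v(x)(u(x)-u(y))}{|x-y|^{N+2s}}\,dx\,dy \;=\; \tfrac{1}{2}\iint_{\Omega\times\Omega}\frac{(v(x)-v(y))(u(x)-u(y))}{|x-y|^{N+2s}}\,dx\,dy,
\end{equation*}
where the symmetric integrand is now absolutely integrable thanks to $u,v\in W^{s,2}(\mathbb{R}^N)$ (Cauchy--Schwarz on the Gagliardo seminorms). For the two cross integrals, the same variable-swap in the $\Omega^c\times\Omega$ term converts it into $-\iint_{\Omega\times\Omega^c} v(y)(u(x)-u(y))/|x-y|^{N+2s}\,dx\,dy$, and adding it to the $\Omega\times\Omega^c$ piece reassembles $(v(x)-v(y))(u(x)-u(y))$; by symmetry of this symmetrized integrand under $x\leftrightarrow y$, one may rewrite the result as $\tfrac{1}{2}\iint_{\Omega\times\Omega^c}+\tfrac{1}{2}\iint_{\Omega^c\times\Omega}$. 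Summing the three pieces yields exactly the left-hand side of \eqref{Int-Part}, since $(\Omega\times\Omega)\cup(\Omega\times\Omega^c)\cup(\Omega^c\times\Omega)$ is precisely $\mathbb{R}^{2N}\setminus(\Omega^c)^2$.

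The main obstacle is the rigorous justification of the rearrangements on the $\Omega\times\Omega$ term, where the integrand has a non-integrable diagonal singularity before symmetrization. To handle this I would first work with the truncated kernel $(-\Delta)^s_\varepsilon$ from the excerpt, where the symmetry swap is valid by genuine Fubini and no principal value is needed; for fixed $\varepsilon>0$ one obtains
\begin{equation*}
\iint_{|x-y|>\varepsilon,\;(x,y)\in\Omega\times\Omega}\frac{v(x)(u(x)-u(y))}{|x-y|^{N+2s}}\,dx\,dy \;=\; \tfrac{1}{2}\iint_{|x-y|>\varepsilon,\;(x,y)\in\Omega\times\Omega}\frac{(v(x)-v(y))(u(x)-u(y))}{|x-y|^{N+2s}}\,dx\,dy.
\end{equation*}
Passing to $\varepsilon\downarrow 0$, the left side tends to $\int_\Omega v(-\Delta)^s u\,dx$ using the hypothesis $(-\Delta)^s u\in L^2(\Omega)$ and $v\in L^2(\Omega)$ together with $\mathbb{L}^1_s$-control, while the right side converges by dominated convergence applied to the now symmetric, absolutely integrable Gagliardo integrand. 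The cross-integrals $\Omega\times\Omega^c$ and $\Omega^c\times\Omega$ pose no singularity issues since $\operatorname{dist}(\Omega,\Omega^c)$-type cancellations are not needed there, and both are finite by Cauchy--Schwarz against the full Gagliardo seminorm of $u$ on $\mathbb{R}^{2N}$. Combining the limit with the definition of $\mathcal{N}_s$ then gives \eqref{Int-Part}.
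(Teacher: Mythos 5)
The paper does not supply its own proof of Proposition~\ref{prop:prop} --- it simply cites Dipierro--Ros-Oton--Valdinoci --- and your symmetrize-and-truncate argument is precisely the strategy of that reference, so the route is the standard, correct one. Two places deserve more care than the sketch indicates, though. First, in the final paragraph the left-hand side of your $\varepsilon$-truncated identity lives only on $\Omega\times\Omega$, so its limit as $\varepsilon\downarrow 0$ is the principal-value integral over $\Omega\times\Omega$ alone, not $\int_\Omega v(-\Delta)^s u\,dx$; the latter emerges only after recombining with the $\Omega\times(\Omc)$ contribution from your opening display, and identifying $\lim_{\varepsilon\to 0}\int_\Omega v(-\Delta)^s_\varepsilon u\,dx$ with $\int_\Omega v(-\Delta)^s u\,dx$ under the stated $L^2(\Omega)$ hypothesis is itself not immediate (a density argument in $C_c^\infty(\RR^N)$ plus stability of the identity is the clean way). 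Second, the assertion that the un-symmetrized cross integrals are ``finite by Cauchy--Schwarz against the full Gagliardo seminorm of $u$'' is not quite right: Cauchy--Schwarz leaves the factor $\int_\Omega |v(x)|^2\int_{\Omc}|x-y|^{-N-2s}\,dy\,dx \sim \int_\Omega |v|^2\,\mathrm{dist}(\cdot,\partial\Omega)^{-2s}\,dx$, which can diverge for generic $v\in W^{s,2}(\RR^N)$, so those swaps are also not unconditionally justified. The safe and uniform fix is to keep the $|x-y|>\varepsilon$ cutoff on the entire region $\RR^{2N}\setminus(\Omc)^2$ (not just on $\Omega\times\Omega$), perform every $x\leftrightarrow y$ swap there where Fubini holds outright, and then pass to the limit, using dominated convergence for the symmetric Gagliardo form and Lemma~\ref{lem:Nmap} (which gives $\mathcal N_s u\in L^2(\Omc)$) for the exterior term.
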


\section{The parabolic state equations}\label{s:state}
Before analyzing the optimal control problems \eqref{eq:dcp} and \eqref{eq:ncp}, for a given
function $z$, we shall focus on the Dirichlet \eqref{eq:Sd} and Robin \eqref{eq:Sn} exterior
value problems. We shall assume that $\Om$ is a bounded domain with a Lipschitz continuous boundary.

\subsection{The parabolic Dirichlet problem for the fractional Laplacian}\label{s:dbvp}

Let us consider the following auxiliary problem at first 

\begin{equation}\label{eq:Sd_mod}
 \begin{cases}
    \partial_t w + (-\Delta)^s w &= f \quad \mbox{in } Q, \\
                w &= 0           \quad \mbox{in } \Sigma , \\
                w(0,\cdot) & = 0 \quad \mbox{in } \Om , 
 \end{cases}                
 \end{equation}
i.e., a fractional parabolic equation with nonzero right-hand-side 
but zero exterior condition. Notice that \eqref{eq:Sd_mod} can be rewritten as the following Cauchy problem:

\begin{equation}\label{CP}
\begin{cases}
\partial_tw+(-\Delta)_D^sw=f\quad &\mbox{ in } Q,\\
w(0,\cdot)=0&\mbox{ in }\;\Omega.
\end{cases}
\end{equation}

We next state the notion of a weak solution
to \eqref{eq:Sd_mod}:

 \begin{definition}[\bf Weak solution: homogeneous Dirichlet case] 
 \label{def:weak_d}
    Let $f \in L^2((0,T);W^{-s,2}(\overline\Om))$. 
    A $w \in \mathbb{U}_0 := L^2((0,T);W^{s,2}_0(\overline\Om)) \cap H^1((0,T);W^{-s,2}(\overline\Om))$ 
    is said to be a weak solution to \eqref{eq:Sd_mod} if 
    \[
     \langle \partial_t w , v \rangle +
     \frac{C_{N,s}}{2} 
        \int_{\RR^N}\int_{\RR^{N}} 
         \frac{(w(x)-w(y))(v(x)-v(y))}{|x-y|^{N+2s}} \;dxdy 
         = \langle f , v \rangle , 
    \]
    for every $v \in W^{s,2}_0(\overline\Om)$ and almost every $t \in (0,T)$. 
 \end{definition}
 
 \begin{remark}\label{rem:cont}
 {\rm
     A weak solution to \eqref{eq:Sd_mod} belongs to $C([0,T],L^2(\Om))$
     (see \cite[Remark~9]{TLeonori_IPeral_APrimo_FSoria_2015a} for details).
 }
 \end{remark}

 The existence and uniqueness of solution to \eqref{eq:Sd_mod} was shown in 
 \cite[Theorem~26]{TLeonori_IPeral_APrimo_FSoria_2015a}.

 \begin{proposition}[\bf Weak solution to \eqref{eq:Sd_mod}]
 \label{prop:weak_Dir}
  Let  $f \in L^2((0,T);W^{-s,2}(\overline\Om))$. Then there exists a unique weak 
  solution $w \in \mathbb{U}_0$ to \eqref{eq:Sd_mod} in the sense of 
  Definition~\ref{def:weak_d} given by
  \begin{align*}
  u(t,x)=\int_0^t e^{-(t-\tau)(-\Delta)_D^s}f(\tau,x)\;d\tau,
  \end{align*}
  where $(e^{-t(-\Delta)_D^s})_{t\ge 0}$ is the semigroup mentioned in Proposition \ref{semigroup}.
  In addition there is a constant $C>0$ such that
  \begin{align}\label{Es-DS-0}
   \|w\|_{\mathbb{U}_0} 
    \le C \|f\|_{L^2((0,T);W^{-s,2}(\overline\Om))}  .
  \end{align}
\end{proposition}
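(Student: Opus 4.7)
My plan is to combine the variational (Lions--Magenes) approach to parabolic equations with the semigroup representation from Proposition~\ref{semigroup}. Set $V := W_0^{s,2}(\overline{\Om})$, $H := L^2(\Om)$, and note that $(V,H,V^*)$ with $V^* = W^{-s,2}(\overline{\Om})$ forms a Gelfand triple with continuous and dense embeddings (the latter dense embedding into $L^2(\Om)$ requires the Lipschitz regularity of $\partial\Om$ to extend $\mathcal{D}(\Om)$-density). Define the bilinear form
\begin{equation*}
a(u,v) := \frac{C_{N,s}}{2}\int_{\RR^N}\int_{\RR^N}\frac{(u(x)-u(y))(v(x)-v(y))}{|x-y|^{N+2s}}\,dxdy, \qquad u,v\in V,
\end{equation*}
which, since elements of $V$ vanish outside $\Om$, is exactly $\|u\|_{W_0^{s,2}(\overline{\Om})}^2$ on the diagonal, so $a$ is continuous and coercive on $V$.

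Next I invoke the standard Lions existence theorem for parabolic equations associated with a continuous, coercive bilinear form: given $f \in L^2((0,T);V^*)$, there exists a unique $w \in L^2((0,T);V)$ with $\partial_t w \in L^2((0,T);V^*)$ such that
\begin{equation*}
\langle \partial_t w, v \rangle + a(w,v) = \langle f, v\rangle \quad \text{for a.e.\ } t\in(0,T),\ \forall v \in V,
\end{equation*}
together with $w(0)=0$ in $H$ (the trace makes sense by Remark~\ref{rem:cont}), and the energy bound \eqref{Es-DS-0} follows by testing with $v = w$, integrating in time, using coercivity, and applying Young's inequality on the right-hand side. This yields the stated existence, uniqueness, and estimate in $\mathbb{U}_0$.

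It then remains to identify the variational solution with the semigroup formula. Since $-(-\Delta)_D^s$ is a negative self-adjoint operator on $H$ by Proposition~\ref{semigroup} and its associated form is precisely the restriction of $a$ to $D(a) = V$ via the integration by parts formula of Proposition~\ref{prop:prop} (the boundary integral drops because test functions are zero on $\Omc$), the abstract Cauchy problem \eqref{CP} has mild solution
\begin{equation*}
w(t) = \int_0^t e^{-(t-\tau)(-\Delta)_D^s} f(\tau)\,d\tau.
\end{equation*}
For $f \in L^2((0,T);H)$ this mild solution is known to coincide with the variational one by a direct check: both satisfy the same equation in $V^*$, share the zero initial datum, and uniqueness from the Lions framework forces them to agree. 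A density argument in $f \in L^2((0,T);V^*)$ then extends the identification to the full class.

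The main obstacle is the identification step, because the semigroup is a priori defined on $L^2(\Om)$ whereas the forcing lives in the larger space $L^2((0,T);V^*)$. I would handle this by first proving equivalence for smooth $f$ (where the mild solution is a strict solution and Proposition~\ref{prop:prop} applies pointwise in time) and then passing to the limit using the continuity of both the variational solution map and the semigroup convolution map from $L^2((0,T);V^*)$ into $\mathbb{U}_0$; the bound \eqref{Es-DS-0} ensures the limiting object retains the required regularity.
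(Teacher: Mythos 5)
Your proof is correct, but the paper itself does not argue this proposition at all: it simply cites \cite[Theorem~26]{TLeonori_IPeral_APrimo_FSoria_2015a} for existence and uniqueness and then records the semigroup representation. What you have written is therefore a self-contained Lions--Magenes argument that replaces an external reference, and as such it is a welcome unpacking rather than a divergence in strategy. The variational setup is right: $V = W_0^{s,2}(\overline\Om)$, $H = L^2(\Om)$, $V^*=W^{-s,2}(\overline\Om)$ is a Gelfand triple; the bilinear form $a$ coincides with $\frac{C_{N,s}}{2}\|\cdot\|_{W_0^{s,2}(\overline\Om)}^2$ on the diagonal precisely because $V$-functions vanish on $\Omc$, so it is bounded and coercive; the Lions theorem and the standard energy estimate then deliver existence, uniqueness and the bound; and the identification with the mild solution via $C^1$-regular $f$ plus density and uniqueness is a legitimate way to handle the fact that the semigroup is defined on $H$ while $f$ lives in $L^2((0,T);V^*)$ (equivalently one could note that an analytic self-adjoint semigroup extends to $V^*$).

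Two small points worth tightening. First, the density of $V$ in $H$ requires no Lipschitz hypothesis on $\partial\Om$: $\mathcal D(\Om)$ already sits inside $V$ (smooth compactly supported functions extended by zero are in $W^{s,2}(\RR^N)$ and vanish on $\Omc$) and is dense in $L^2(\Om)$, so the caveat you insert is unnecessary. Second, testing with $v=w$ and integrating controls $\|w\|_{L^2((0,T);V)}$ and $\|w\|_{L^\infty((0,T);H)}$, but the $\mathbb U_0$-norm also contains $\|\partial_t w\|_{L^2((0,T);V^*)}$; that last piece comes from reading $\partial_t w = f - A w$ in $V^*$, where $A:V\to V^*$ is the bounded operator associated with $a$. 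This is implicit in the Lions framework you invoke, but since you are aiming for a full proof of \eqref{Es-DS-0}, it deserves a sentence.
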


 We next introduce the notion of weak solution to our nonhomogeneous problem 
  \eqref{eq:Sd}. Notice the higher regularity requirement on the datum $z$.
 
 \begin{definition}[\bf Weak solution: nonhomogenous Dirichlet case] 
 \label{def:weak_d_1}
    Let the function \\$z \in H^1((0,T);W^{s,2}(\RR^N \setminus\Om))$ and $\tilde{z} \in H^1((0,T);W^{s,2}(\RR^N))$
    be such that $\tilde{z}|_{\Omc} = z$. Then a 
    $u \in \mathbb{U} := L^2((0,T);W^{s,2}(\RR^N)) \cap H^1((0,T);W^{-s,2}(\overline\Om))$
    is said to be a weak solution to \eqref{eq:Sd} if $u-\tilde{z} \in \mathbb{U}_0$
    and 
    \[
     \langle \partial_t u , v \rangle +
     \frac{C_{N,s}}{2} 
        \int_{\RR^N}\int_{\RR^{N}} 
         \frac{(u(x)-u(y))(v(x)-v(y))}{|x-y|^{N+2s}} \;dxdy 
         = 0 , 
    \]
    for every $v \in W^{s,2}_0(\overline\Om)$ and almost every $t \in (0,T)$. 
 \end{definition}

 Towards this end, we show the well-posedness of \eqref{eq:Sd}.

 \begin{theorem}[\bf Weak solution to \eqref{eq:Sd}]\label{Theo35}
  Let $z \in H^1((0,T);W^{s,2}(\RR^N \setminus\Om))$ be given.
  Then there exists a unique weak solution 
  $u \in \mathbb{U}$ to \eqref{eq:Sd} in the sense of 
  Definition~\ref{def:weak_d_1}. 
   In addition there is 
  a  constant $C>0$ such that
  \begin{align}\label{Es-DS}
   \|u\|_{\mathbb{U}} 
    \le C \|z\|_{H^1((0,T);W^{s,2}(\RR^N \setminus\Om))} .
  \end{align}
 \end{theorem}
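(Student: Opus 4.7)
The plan is to reduce the nonhomogeneous problem \eqref{eq:Sd} to the homogeneous problem \eqref{eq:Sd_mod} already solved in Proposition \ref{prop:weak_Dir}, by lifting the exterior datum $z$ to a function on all of $\RR^N$ and absorbing the induced nonzero initial condition via the semigroup from Proposition \ref{semigroup}.

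First, since $\Om$ is Lipschitz, standard extension results for fractional Sobolev spaces furnish a bounded linear extension $W^{s,2}(\Omc) \to W^{s,2}(\RR^N)$; applying it pointwise in time yields $\tilde z \in H^1((0,T); W^{s,2}(\RR^N))$ with $\tilde z|_{\Sigma} = z$ and
\[
\|\tilde z\|_{H^1((0,T); W^{s,2}(\RR^N))} \le C\|z\|_{H^1((0,T); W^{s,2}(\Omc))}.
\]
Writing $u = w + \tilde z$, the candidate $u$ solves \eqref{eq:Sd} in the sense of Definition \ref{def:weak_d_1} if and only if $w \in \mathbb{U}_0$ satisfies
\[
\langle \partial_t w, v\rangle + \tfrac{C_{N,s}}{2}\int_{\RR^N}\int_{\RR^N}\tfrac{(w(x)-w(y))(v(x)-v(y))}{|x-y|^{N+2s}}\,dxdy = \langle f, v\rangle
\]
for every $v \in W^{s,2}_0(\bOm)$ and a.e.\ $t$, with $w(0,\cdot) = -\tilde z(0,\cdot)|_{\Om}$ in $\Om$, where $f := -\partial_t \tilde z - (-\Delta)^s \tilde z$.

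Next I would verify that $f \in L^2((0,T); W^{-s,2}(\bOm))$. The time-derivative piece is controlled because $\partial_t \tilde z \in L^2((0,T); W^{s,2}(\RR^N))$ and its restriction to $\Om$ embeds continuously into $L^2((0,T); L^2(\Om)) \hookrightarrow L^2((0,T); W^{-s,2}(\bOm))$. The spatial piece is interpreted through the symmetric bilinear form on $W^{s,2}(\RR^N) \times W^{s,2}_0(\bOm)$ appearing in Proposition \ref{prop:prop}; Cauchy-Schwarz on the double integral yields $\|(-\Delta)^s \tilde z\|_{W^{-s,2}(\bOm)} \le C\|\tilde z\|_{W^{s,2}(\RR^N)}$, so overall $\|f\|_{L^2((0,T); W^{-s,2}(\bOm))} \le C\|\tilde z\|_{H^1((0,T); W^{s,2}(\RR^N))}$.

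I would then construct $w$ via the Duhamel formula
\[
w(t) = -e^{-t(-\Delta)_D^s}\bigl(\tilde z(0,\cdot)|_{\Om}\bigr) + \int_0^t e^{-(t-\tau)(-\Delta)_D^s} f(\tau)\,d\tau,
\]
using that the time trace $\tilde z(0,\cdot) \in W^{s,2}(\RR^N)$ (a consequence of $H^1((0,T);X) \hookrightarrow C([0,T];X)$) restricts to an $L^2(\Om)$ datum. Proposition \ref{prop:weak_Dir} places the Duhamel integral in $\mathbb{U}_0$ with the bound \eqref{Es-DS-0}; by Proposition \ref{semigroup} and the fact that $(-\Delta)_D^s$ is positive self-adjoint on $L^2(\Om)$ with form domain $W^{s,2}_0(\bOm)$, standard analytic semigroup theory places the leading term $t \mapsto e^{-t(-\Delta)_D^s}(\tilde z(0,\cdot)|_{\Om})$ in $C([0,T]; L^2(\Om)) \cap L^2((0,T); W^{s,2}_0(\bOm))$ with time derivative in $L^2((0,T); W^{-s,2}(\bOm))$. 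Thus $w \in \mathbb{U}_0$, hence $u := w + \tilde z \in \mathbb{U}$ solves \eqref{eq:Sd}, and concatenating the extension, forcing, semigroup and \eqref{Es-DS-0} bounds produces \eqref{Es-DS}. Uniqueness is immediate: the difference of two solutions lies in $\mathbb{U}_0$ and satisfies \eqref{eq:Sd_mod} with vanishing forcing and initial datum, so Proposition \ref{prop:weak_Dir} forces it to vanish. The main obstacle is that a lifting $\tilde z$ generically yields a nonzero initial value $-\tilde z(0,\cdot)|_{\Om}$ for $w$, which blocks a direct appeal to Proposition \ref{prop:weak_Dir} and compels the explicit semigroup term in the Duhamel decomposition above together with its parabolic regularity.
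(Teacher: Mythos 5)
Your proof is correct, but it departs from the paper's argument in two places, and in one of them it is actually more careful than the published proof. The paper takes $\tilde z$ to be the \emph{$s$-harmonic} lift of $z$, i.e., the weak solution of $(-\Delta)^s\tilde z = 0$ in $\Om$, $\tilde z = z$ in $\Omc$, for each fixed $t$. That choice makes the spatial contribution of $\tilde z$ vanish when tested against $v \in W^{s,2}_0(\bOm)$, so the forcing for the shifted problem reduces simply to $-\partial_t\tilde z$; you instead use a generic bounded extension $W^{s,2}(\Omc)\to W^{s,2}(\RR^N)$ and therefore pick up the extra term $-(-\Delta)^s\tilde z$, which you correctly interpret in $W^{-s,2}(\bOm)$ via the bilinear form and Cauchy--Schwarz. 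Both choices of lifting work; the $s$-harmonic one trades the construction of \eqref{eq:vwellip} for a cleaner right-hand side, while yours uses only off-the-shelf extension operators. The second difference is the real one: you correctly observe that the shift $w = u - \tilde z$ inherits the initial datum $w(0,\cdot) = -\tilde z(0,\cdot)|_\Om$, which is generically nonzero, so Proposition~\ref{prop:weak_Dir} (which assumes zero initial data) cannot be invoked directly and an explicit $e^{-t(-\Delta)_D^s}\bigl(\tilde z(0,\cdot)|_\Om\bigr)$ term with the standard energy estimate in $\mathbb{U}_0$ is needed. The paper's proof writes $w(0,\cdot)=0$ for \eqref{eq:Sd_1} without justification, which holds only if $\tilde z(0,\cdot)|_\Om = 0$ (e.g., if $z(0,\cdot)=0$, the hypothesis appearing in the cited result of \cite{warma2018approximate} but not in Theorem~\ref{Theo35} itself); your Duhamel decomposition closes that gap.
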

 \begin{proof}
  Before we proceed with the proof, we need some preparation. Let us first
  assume that $z$ only depends on the spatial variable $x$ and consider the 
  $s$-Harmonic extension $\tilde{z} \in W^{s,2}(\RR^N)$ of 
  $z \in W^{s,2}(\RR^N\setminus\Om)$ that solves the Dirichlet problem
  \begin{equation}\label{eq:vwellip}
  \begin{cases}
      (-\Delta)^s \tilde{z} = 0 \quad &\mbox{in } \Om , \\
                  \tilde{z} = z \quad &\mbox{in } \Omc ,
  \end{cases}
  \end{equation}
  in a weak sense, i.e., given $z \in W^{s,2}(\RR^N\setminus\Om)$ there exists a unique 
  $\tilde{z} \in W^{s,2}(\RR^N)$, such that $\tilde{z}|_{\Omc} = z$, $\tilde z$ solves 
  \eqref{eq:vwellip} in the  sense that
  \[
      \frac{C_{N,s}}{2} 
        \int_{\RR^N}\int_{\RR^{N}} 
         \frac{(\tilde{z}(x)-\tilde{z}(y))(v(x)-v(y))}{|x-y|^{N+2s}} \;dxdy  
         = 0  
      \quad \mbox{for all } v \in W^{s,2}_0(\overline\Om),
  \]
and there is a constant $C > 0$ such that 
  \begin{equation}\label{E-DP}
      \|\tilde{z}\|_{W^{s,2}(\RR^N)} \le C \|z\|_{W^{s,2}(\RR^N\setminus\Om)} .
  \end{equation}
  The existence of a weak solution to \eqref{eq:vwellip} and the continuous dependence on data $z$ have been shown 
  in \cite{GGrubb_2015a}, see also \cite{ghosh2016calder,MIVisik_GIEskin_1965a}.
  When $z$ is a function of $(x,t)$ then it follows from the above arguments that 
  if $z \in L^2((0,T);W^{s,2}(\Omc))$ then $\tilde{z} \in L^2((0,T);W^{s,2}(\RR^N))$. 
  On the other hand, if $\partial_t z \in L^2((0,T);W^{s,2}(\Omc))$ then 
  $\partial_t \tilde{z} \in L^2((0,T);W^{s,2}(\RR^N))$.
  
 Now we show the existence of a unique solution to \eqref{eq:Sd} using a lifting
  argument. We define $w := u - \tilde{z}$. Then $w|_{\Omc} = 0$. Moreover,
  a simple calculation shows that $w$ fulfills 
  \begin{equation}\label{eq:Sd_1}
  \begin{cases}
    \partial_t w + (-\Delta)^s w = -\partial_t \tilde{z}   \quad& \mbox{in } Q, \\
                w = 0           \quad &\mbox{in } \Sigma ,  \\
                w(0,\cdot)  = 0 \quad& \mbox{in } \Om .
  \end{cases}                
  \end{equation}
  Since we have assumed that $\partial_t z \in L^2((0,T);W^{s,2}(\Omc))$, therefore from the above
  discussion we have that $\partial_t\tilde{z} \in L^2((0,T);W^{s,2}(\RR^N))$. Hence, using
  Proposition~\ref{eq:Sd_mod}, we get that there exists a unique $w \in \mathbb{U}_0$ 
  solving 
  \eqref{eq:Sd_1}. Thus the unique solution $u \in \mathbb{U}$ is given by $u=w+\tilde z$. It remains to show the estimate \eqref{Es-DS}. Firstly, since $w=0$ in $\RR^N\setminus \Omega$, it follows from \eqref{Es-DS} that there is a constant $C>0$ such that
  \begin{align}\label{A1}
  \|w\|_{\mathbb U}=\|w\|_{\mathbb U_0}\le C\|\partial_t\tilde z\|_{L^2((0,T);W^{-s,2}(\bOm)}.
  \end{align}
Secondly, it follows from \eqref{E-DP} that there is a constant $C>0$ such that
\begin{align}\label{A2}
  \|\tilde{z}\|_{L^2((0,T);W^{s,2}(\RR^N))} \le C \|z\|_{L^2((0,T);W^{s,2}(\RR^N\setminus\Om))} .
\end{align}  
Thirdly, using \eqref{A1} and \eqref{A2} we get that there is a constant $C>0$ such that
\begin{align}\label{A3}
\|u\|_{\mathbb U}&=\|w+\tilde z\|_{\mathbb U}\le \|w\|_{\mathbb U}+\|\tilde z\|_{\mathbb U}\notag\\
&\le C\left(\|\partial_t\tilde z\|_{L^2((0,T);W^{-s,2}(\bOm))}+ \|z\|_{L^2((0,T);W^{s,2}(\RR^N\setminus\Om))} +\|\tilde z\|_{H^1((0,T);W^{-s,2}(\bOm))}\right)\notag\\
&\le C\left(\|\partial_t\tilde z\|_{L^2((0,T);W^{-s,2}(\bOm))}+ \|z\|_{L^2((0,T);W^{s,2}(\RR^N\setminus\Om))} +\|\tilde z\|_{L^2((0,T);W^{-s,2}(\bOm))}\right).
\end{align}
Since $\tilde z\in L^2((0,T);W^{s,2}(\RR^N))$, then using \eqref{E-DP}, we get that

\begin{align}\label{mw1}
\|\tilde z\|_{L^2((0,T);W^{-s,2}(\bOm))}\le &C\|\tilde z\|_{L^2((0,T);W^{-s,2}(\RR^N))}\le C\|\tilde z\|_{L^2((0,T);W^{s,2}(\RR^N))}\notag\\
\le &C\|z\|_{L^2((0,T);W^{s,2}(\RR^N\setminus\Om))}.
\end{align}

Note that $\partial_t\tilde z$ is a solution of the Dirichlet problem \eqref{eq:vwellip} with $z$ replaced with $\partial_tz$. This shows that $\partial_t\tilde z\in L^2((0,T);W^{s,2}(\RR^N))$. Hence, using  \eqref{E-DP} again, we obtain that

\begin{align}\label{mw2}
\|\partial_t\tilde z\|_{L^2((0,T);W^{-s,2}(\bOm))}\le &C\|\partial_t\tilde z\|_{L^2((0,T);W^{-s,2}(\RR^N))}\le C\|\partial_t\tilde z\|_{L^2((0,T);W^{s,2}(\RR^N))}\notag\\
\le &C\|\partial_t z\|_{L^2((0,T);W^{s,2}(\RR^N\setminus\Om))}.
\end{align}
Combining \eqref{mw1} and \eqref{mw2}, we get from \eqref{A3} that
\begin{align*}
\|u\|_{\mathbb U}\le C\left( \|z\|_{L^2((0,T);W^{s,2}(\RR^N\setminus\Om))}+\|\partial_t z\|_{L^2((0,T);W^{s,2}(\RR^N\setminus\Om))}\right).
\end{align*}
We have shown \eqref{Es-DS} and  the proof is finished.
\end{proof}

\begin{remark}
{\em Let $(\varphi_n)_{n\in\NN}$ be the orthonormal basis of eigenfunctions of $(-\Delta)_D^s$ associated with the eigenvalues $(\lambda_n)_{n\in\NN}$. If in Theorem \ref{Theo35}, one assumes that $z\in H^1((0,T);W^{s,2}(\RR^N\setminus\Omega))$ with $z(0,\cdot)=0$, then it has been shown in \cite[Theorem 18]{warma2018approximate} that the unique weak solution $u$ of  \eqref{eq:Sd} is given by
\begin{align*}
u(t,x)=-\sum_{n=1}^\infty\left(\int_0^t\left(z(\cdot,t-\tau),\mathcal N_s\varphi_n\right)_{L^2(\RR^N\setminus\Omega} e^{-\lambda_n\tau}\;d\tau\right)\varphi_n(x) . 
\end{align*}
}
\end{remark}

Our next goal is to reduce the regularity requirements on the datum $z$ 
in both space and time. We shall call the resulting solution $u$ as 
very-weak solution.

 \begin{definition}[\bf Very-weak solution: nonhomogenous Dirichlet case] 
 \label{def:vweak_d}
    Let the function $z \in L^2((0,T);L^2(\RR^N\setminus\Om))$. A  $u \in L^2((0,T);L^2(\RR^N))$ 
    is said to be a very-weak solution to \eqref{eq:Sd} if the identity
    \begin{equation}\label{eq:vw_d}
      \int_{Q} 
         u \left(-\partial_t v + (-\Delta)^s v \right)\;dxdt
         = - \int_{\Sigma} z \mathcal{N}_s v\;dxdt ,
    \end{equation}
   holds for every $v \in L^2((0,T);V) \cap H^1((0,T);L^2(\Om))$ with $v(T,\cdot) = 0$, where 
   $V := \{v \in W^{s,2}_0(\overline\Om) \;:\; (-\Delta)^s v \in L^2(\Om) \}$. 
 \end{definition}
The following result shows the existence and uniqueness of a very-weak solution 
to \eqref{eq:Sd} in the sense of Definition~\ref{def:vweak_d}. We will prove this
result by using a duality argument (see e.g.  \cite{WGong_MHinze_ZZhou_2016a} for the classical case $s=1$).

 \begin{theorem}\label{thm:vwdexist}
  Let $z \in L^2((0,T);L^2(\RR^N\setminus\Om))$. Then there exists a unique 
  very-weak solution $u$ to \eqref{eq:Sd} according to Definition~\ref{def:vweak_d} 
  that fulfills
  \begin{align}\label{VWS_EST}
   \|u\|_{L^2((0,T);L^2(\Om))} \le C \|z\|_{L^2((0,T);L^2(\RR^N \setminus\Om))} 
  \end{align}
for a constant $C >0$. 
In addition, if $z\in H^1((0,T);W^{s,2}(\Omc))$, then the following assertions hold.
\begin{enumerate}
\item Every weak solution of \eqref{eq:Sd} is also a very-weak solution.
\item Every very-weak solution of \eqref{eq:Sd} that belongs to 
      $\mathbb{U}$ is also a weak solution.
\end{enumerate}
 \end{theorem}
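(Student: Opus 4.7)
The natural route is the transposition (duality) method, with the backward fractional parabolic Dirichlet problem as the auxiliary object. For each $F \in L^2(Q)$ I would consider
\[
\begin{cases}
-\partial_t v + (-\Delta)^s v = F & \text{in } Q,\\
v = 0 & \text{in } \Sigma,\\
v(T,\cdot) = 0 & \text{in } \Om.
\end{cases}
\]
Setting $\tau = T-t$ this is equivalent to the forward Cauchy problem \eqref{CP} on $(0,T)$, which by Proposition~\ref{prop:weak_Dir} has a unique weak solution. Since $(-\Delta)_D^s$ is self-adjoint and positive on $L^2(\Om)$, the semigroup in Proposition~\ref{semigroup} is in fact analytic, and hence the standard maximal regularity theorem for analytic semigroups on a Hilbert space yields, for $F \in L^2(Q)$, the sharper regularity $v \in L^2((0,T);V) \cap H^1((0,T);L^2(\Om))$ together with the estimate
\[
\|\partial_t v\|_{L^2(Q)} + \|(-\Delta)^s v\|_{L^2(Q)} + \|v\|_{L^2((0,T);W^{s,2}_0(\bOm))} \le C\|F\|_{L^2(Q)}.
\]
Thus the map $\Lambda: v \mapsto -\partial_t v + (-\Delta)^s v$ is a bounded bijection from the test space in Definition~\ref{def:vweak_d} onto $L^2(Q)$.

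Next, I would combine this with Lemma~\ref{lem:Nmap}. Since $v(t,\cdot) \in W^{s,2}_0(\bOm) \subset W^{s,2}(\RR^N)$ for a.e. $t$, the interaction operator gives $\mathcal{N}_s v(t,\cdot) \in L^2(\Omc)$ with $\|\mathcal{N}_s v(t,\cdot)\|_{L^2(\Omc)} \le C\|v(t,\cdot)\|_{W^{s,2}(\RR^N)}$, so
\[
\|\mathcal{N}_s v\|_{L^2(\Sigma)} \le C\|v\|_{L^2((0,T);W^{s,2}(\RR^N))} \le C\|F\|_{L^2(Q)}.
\]
Then the linear form $L(F) := -\int_\Sigma z\,\mathcal{N}_s v_F\,dxdt$, where $v_F$ is the unique adjoint solution above, satisfies $|L(F)| \le C\|z\|_{L^2(\Sigma)}\|F\|_{L^2(Q)}$. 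By Riesz representation there is a unique $u \in L^2(Q)$ with $\int_Q uF\,dxdt = L(F)$ for every $F \in L^2(Q)$, and $\|u\|_{L^2(Q)} \le C\|z\|_{L^2(\Sigma)}$, which is \eqref{VWS_EST}. Since $\Lambda$ is surjective onto $L^2(Q)$, this identity is exactly \eqref{eq:vw_d} and it holds for every admissible test function $v$, giving existence. Uniqueness follows from the same surjectivity: any difference of two very-weak solutions pairs to zero against every $F \in L^2(Q)$.

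For the consistency statements (1) and (2), assume $z \in H^1((0,T);W^{s,2}(\Omc))$ and let $u \in \mathbb{U}$ be the weak solution produced by Theorem~\ref{Theo35}. Given an admissible test function $v$, I would integrate $\partial_t u$ against $v$ by parts in time, using $u(0,\cdot)=0$ and $v(T,\cdot)=0$, and apply the fractional integration by parts formula \eqref{Int-Part} in space; since $v$ vanishes on $\Omc$, the bilinear form term in Definition~\ref{def:weak_d_1} reduces to $\int_\Om (u - \text{boundary corrections})(-\Delta)^s v$, and the exterior-domain term collects as $-\int_\Sigma z\,\mathcal{N}_s v$. This recovers \eqref{eq:vw_d}, so $u$ is very-weak. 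Conversely, if $u \in \mathbb{U}$ is very-weak, running the same integration-by-parts identities in reverse shows that $u - \tilde z$ satisfies the weak formulation in Definition~\ref{def:weak_d_1}. The one technical point I expect to be the real obstacle is justifying the maximal regularity step cleanly enough to place $v_F$ inside the test space $V$ pointwise in $t$; once that is in hand the rest is bookkeeping via \eqref{Int-Part} and Lemma~\ref{lem:Nmap}.
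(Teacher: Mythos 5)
Your proposal follows essentially the same transposition/duality route as the paper: solve the backward Dirichlet problem for a source $F\in L^2(Q)$, use Lemma~\ref{lem:Nmap} to show the linear functional $F \mapsto -\int_\Sigma z\,\mathcal{N}_s v_F$ is bounded, and represent $u$ by duality (your Riesz step is the same as the paper's $u := \mathcal{M}^*z$); parts (1) and (2) proceed by the fractional integration by parts formula \eqref{Int-Part} exactly as in the paper. The one substantive difference is that you explicitly invoke maximal $L^2$ regularity for the analytic semigroup generated by $-(-\Delta)_D^s$ to place $v_F$ in the test class $L^2((0,T);V)\cap H^1((0,T);L^2(\Om))$ and to see that $\Lambda = -\partial_t + (-\Delta)^s$ is a bijection onto $L^2(Q)$; the paper's write-up only records $v\in\mathbb{U}_0$ while silently using the stronger regularity, so your observation actually tightens a step the paper glosses over.
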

 
 \begin{proof}  
  For a given 
  $\zeta \in L^2((0,T);L^2(\Om))$, we begin by considering the following 
  ``dual" problem 
  \begin{equation}\label{eq:Sd_dual}
  \begin{cases}
    -\partial_t v + (-\Delta)^s v &= \zeta \quad \mbox{in } Q, \\
                v &= 0           \quad \mbox{in } \Sigma ,  \\
                v(T,\cdot) & = 0 \quad \mbox{in } \Om . 
 \end{cases}                
 \end{equation} 
Using semigroup theory as in  Proposition~\ref{prop:weak_Dir}, one can easily deduce that the problem \eqref{eq:Sd_dual} has a unique weak solution 
 $v \in \mathbb{U}_0$. 
 
 Since $v \in L^2((0,T);W^{s,2}_0(\overline\Om))$, owing to Lemma~\ref{lem:Nmap}, we 
 have that $\mathcal{N}_s v \in L^2((0,T);L^2(\RR^N\setminus\Om))$. 
 Towards, this end we define the mapping 
 \begin{equation*}
     \mathcal{M} : L^2((0,T);L^2(\Om)) \rightarrow L^2((0,T);L^2(\RR^N\setminus\Om)), \qquad
                           \zeta \mapsto \mathcal{M} \zeta := -\mathcal{N}_s v .
 \end{equation*}
 We notice that $\mathcal{M}$ is linear and continuous because 
 \[
     \|\mathcal{M}\zeta\|_{L^2((0,T);L^2(\RR^N\setminus\Om))} 
      = \|\mathcal{N}_s v\|_{L^2((0,T);L^2(\RR^N\setminus\Om))} 
      \le C \|v\|_{L^2((0,T);W^{s,2}_0(\overline\Om))}
      \le C\|\zeta\|_{L^2((0,T);L^2(\Om))} . 
 \] 
 
 Let $u := \mathcal{M}^* z$, then we have  
 \[
     \int_Q u \zeta \; dxdt 
      = \int_Q u \left( -\partial_t v + (-\Delta)^s v \right) \;dxdt
      = \int_Q (\mathcal{M}^* z) \zeta \; dxdt
      = -\int_\Sigma z \mathcal{N}_s v \; dxdt.
 \]
 We have constructed a unique $u \in L^2((0,T);L^2(\RR^N))$ that 
 solves \eqref{eq:vw_d}. Finally, we notice that 
 \[
     \left|\int_Q u \zeta \; dxdt \right| 
         \le \|z\|_{L^2((0,T);L^2(\Om))} \|\mathcal{N}_s v\|_{L^2((0,T);L^2(\RR^N\setminus\Om))}
         \le C \|z\|_{L^2((0,T);L^2(\Om))} \|\zeta\|_{L^2((0,T);L^2(\Om))} .
 \]
 Dividing both sides by $\|\zeta\|_{L^2((0,T);L^2(\Om))}$ and taking the supremum over
 $\zeta \in L^2((0,T);L^2(\Om))$ we obtain \eqref{VWS_EST}.
 
 Next we prove the last two assertions of the theorem. Assume that  
 $z\in H^1((0,T);W^{s,2}(\Omc))$.
  
 (a) Let $u \in \mathbb{U} \hookrightarrow L^2((0,T);L^2(\RR^N))$ 
 be a weak solution to \eqref{eq:Sd}. It follows from 
 the definition that $u = z$ on $\Omc$ and
 \begin{align}\label{e1}
     \langle \partial_t u , v \rangle +
     \frac{C_{N,s}}{2} 
        \int_{\RR^N}\int_{\RR^{N}} 
         \frac{(u(x)-u(y))(v(x)-v(y))}{|x-y|^{N+2s}} \;dxdy 
         = 0 , 
 \end{align}
 for every $v \in L^2((0,T);V) \cap H^1((0,T);L^2(\Om))$ and almost every $t \in (0,T)$.
 Since $v=0$ in $\RR^N\setminus\Omega$, we have that
\begin{align}\label{e2}
\int_{\RR^N}\int_{\RR^N}&\frac{(u(x)-u(y))(v(x)-v(y))}{|x-y|^{N+2s}}\;dxdy\notag\\
&=\int\int_{\RR^{2N}\setminus(\RR^N\setminus\Omega)^2}\frac{(u(x)-u(y))(v(x)-v(y))}{|x-y|^{N+2s}}\;dxdy.
\end{align}
Using \eqref{e1}, \eqref{e2}, the integration by parts formula \eqref{Int-Part} together with the fact that $u=z$ in $\RR^N\setminus\Omega$, we get that
\begin{align*}
\langle \partial_t u,v \rangle + 
\frac{C_{N,s}}{2}\int_{\RR^N}\int_{\RR^N}&\frac{(u(x)-u(y))(v(x)-v(y))}{|x-y|^{N+2s}}\;dxdy\\
&=0\\
&=\langle\partial_t u,v \rangle + \int_{\Omega}u(-\Delta)^sv\;dx+\int_{\RR^N\setminus\Omega}u\mathcal N_sv\;dx\\
&=\langle\partial_t u,v \rangle + \int_{\Omega}u(-\Delta)^sv\;dx+\int_{\RR^N\setminus\Omega}z\mathcal N_sv\;dx.
\end{align*}
Thus $u$ is a very-weak solution of \eqref{eq:Sd}. \\

 (b) Let $u$ be a very-weak solution to \eqref{eq:Sd} and assume that $u \in \mathbb{U}$. 
 We have that $u=z$ in $(0,T) \times \Omc$. Moreover, $z\in H^1((0,T);W^{s,2}(\Omc))$
 and if $\tilde{z} \in H^1((0,T);W^{s,2}(\RR^N))$ is such that $\tilde z|_{\RR^N\setminus\Omega}=z$, then clearly $u-\tilde{z} \in 
 \mathbb{U}_0$. Since $u$ is a very-weak solution to \eqref{eq:Sd}, then by 
 Definition~\ref{def:vweak_d}, for every $v \in L^2((0,T);V) \cap H^1((0,T);L^2(\Om))$
 with $v(T,\cdot) = 0$, we have that 
 \begin{align}\label{e3}
  \int_{Q}u(-\partial_t v + (-\Delta)^s v)\;dx
  =-\int_{\Sigma}z\mathcal N_sv\;dx.
 \end{align}
 Since $u \in \mathbb{U}$, $v = 0$ on $\Omc$, using the integration by parts formula
 \eqref{Int-Part} we get that 
 \begin{align}\label{e4}
\int_0^T \langle \partial_t u, v \rangle\;dt &+ 
\int_0^T \int_{\RR^N}\int_{\RR^N}\frac{(u(x)-u(y))(v(x)-v(y))}{|x-y|^{N+2s}}\;dxdydt\notag\\
&= \int_0^T\langle \partial_t u, v \rangle \;dt +  \int_0^T\int\int_{\RR^{2N}\setminus(\RR^N\setminus\Omega)^2}\frac{(u(x)-u(y))(v(x)-v(y))}{|x-y|^{N+2s}}\;dxdydt\notag\\
&=\int_{Q}u(\partial_t v + (-\Delta)^sv)\;dxdt+\int_{\Sigma}u\mathcal N_sv\;dx\notag\\
&=\int_{Q}u(\partial_t v + (-\Delta)^sv)\;dxdt+\int_{\Sigma}z\mathcal N_sv\;dxdt. 
\end{align}
It then follows form \eqref{e3} and \eqref{e4} that for every 
$v \in L^2((0,T);V) \cap H^1((0,T);L^2(\Om))$ with $v(T,\cdot) = 0$ we have the identity
 \begin{equation}\label{eq:e5}
    \int_0^T\langle \partial_t u, v \rangle\;dt  + 
    \int_0^T
     \int_{\RR^N}\int_{\RR^N}\frac{(u(x)-u(y))(v(x)-v(y))}{|x-y|^{N+2s}}\;dxdydt = 0.
 \end{equation}
 Since $V$ is dense in $W_0^{s,2}(\overline\Om)$ and $L^2(\Om)$ is dense in 
 $W^{-s,2}(\overline\Om)$, it follows that \eqref{eq:e5} remains true for 
 $v \in \mathbb{U}_0$ with $v(T,\cdot) = 0$. Notice that for every 
 $t \in (0,T]$ we have that $v(t,\cdot) \in W^{s,2}_0(\overline\Om)$. As a result, we 
 have that the following pointwise formulation 
 \begin{equation}\label{eq:e6}
    \langle \partial_t u, v \rangle\;  +     
     \int_{\RR^N}\int_{\RR^N}\frac{(u(x)-u(y))(v(x)-v(y))}{|x-y|^{N+2s}}\;dxdy = 0,
 \end{equation}
 holds for every $v \in W_0^{s,2}(\overline\Om)$ which is independent on $t$. We have shown that $u$ is the 
 unique weak solution to  \eqref{eq:Sd} according to Definition~\ref{def:weak_d_1} and the 
 proof is complete.
 \end{proof}

\subsection{The parabolic Robin problem for the fractional Laplacian}\label{s:rbvp}

In this section, we shall consider the Robin problem 
\eqref{eq:Sn}. We begin by specifying the Sobolev space as introduced in
\cite{SDipierro_XRosOton_EValdinoci_2017a}. Here we follow the notation from
\cite{HAntil_RKhatri_MWarma_2018a}. For $g \in L^1(\Omc)$ fixed, we let 
    \begin{align*}
W_{\Omega,g}^{s,2}:=\Big\{u:\RR^N\to\RR\;\mbox{ measurable, }\,\|u\|_{W_{\Omega,g}^{s,2}}<\infty\Big\}, 
\end{align*}
where
\begin{align}\label{norm}
\|u\|_{W_{\Omega,g}^{s,2}}:=\left(\|u\|_{L^2(\Omega)}^2+\||g|^{\frac 12}u\|_{L^2(\RR^N\setminus\Omega)}^2+\int\int_{\RR^{2N}\setminus(\RR^N\setminus\Omega)^2}\frac{|u(x)-u(y)|^2}{|x-y|^{N+2s}}dxdy\right)^{\frac 12}.
\end{align}
 Let $\mu$ be the measure on $\RR^N\setminus\Omega$ given by $d\mu=|g|dx$.
With this setting, the norm in \eqref{norm} can be rewritten as 
\begin{align}\label{norm-e}
\|u\|_{W_{\Omega,g}^{s,2}}:=\left(\|u\|_{L^2(\Omega)}^2+\|u\|_{L^2(\RR^N\setminus\Omega,\mu)}^2+\int\int_{\RR^{2N}\setminus(\RR^N\setminus\Omega)^2}\frac{|u(x)-u(y)|^2}{|x-y|^{N+2s}}dxdy\right)^{\frac 12}.
\end{align}

If $g=0$, we shall let $W_{\Omega,0}^{s,2}=W_{\Omega}^{s,2}$.
The following result has been proved in \cite[Proposition 3.1]{SDipierro_XRosOton_EValdinoci_2017a}.

\begin{proposition}
Let $g\in L^1(\RR^N\setminus\Omega)$. Then $W_{\Omega,g}^{s,2}$ is a Hilbert space.
\end{proposition}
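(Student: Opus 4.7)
The plan is to check the two defining properties of a Hilbert space in turn: that the stated norm comes from an inner product (so the space is prehilbertian), and that this normed space is complete. The candidate inner product, obtained by polarization, is
\[
\langle u,v\rangle := \int_\Omega uv\,dx + \int_{\RR^N\setminus\Omega} |g|uv\,dx + \int\int_{\RR^{2N}\setminus(\RR^N\setminus\Omega)^2} \frac{(u(x)-u(y))(v(x)-v(y))}{|x-y|^{N+2s}}\,dxdy,
\]
which is manifestly bilinear, symmetric, and satisfies $\langle u,u\rangle=\|u\|^2_{W_{\Omega,g}^{s,2}}$. For positive-definiteness, if $\|u\|_{W_{\Omega,g}^{s,2}}=0$ then $u=0$ a.e.\ on $\Omega$ and the double integral vanishes, which after Fubini forces $|u(y)|^2\int_\Omega|x-y|^{-N-2s}dx=0$ for a.e.\ $y\in\RR^N\setminus\Omega$. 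Since $\Omega$ is bounded with positive measure, the inner integral is strictly positive for each such $y$, so $u=0$ a.e.\ on $\RR^N\setminus\Omega$ as well.

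For completeness, let $\{u_n\}$ be Cauchy in $W_{\Omega,g}^{s,2}$. Because the three terms of the norm are independently controlled, $\{u_n|_\Omega\}$ is Cauchy in $L^2(\Omega)$ with some limit $u$, and $\{|g|^{1/2}u_n|_{\RR^N\setminus\Omega}\}$ is Cauchy in $L^2(\RR^N\setminus\Omega)$ with some limit $h$. Passing to a subsequence gives pointwise a.e.\ convergence of $u_n$ on $\Omega$ and of $|g|^{1/2}u_n$ on $\RR^N\setminus\Omega$. The main obstacle is that on the set $\{g=0\}\subset\RR^N\setminus\Omega$ the weighted term provides no direct information about $u_n$, so the limit function must be identified there by another route.

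To circumvent this I would exploit the Gagliardo cross term. Fix any bounded measurable $B\subset\Omega$ of positive Lebesgue measure and any compact $K\subset\RR^N\setminus\Omega$; since $B\cup K$ is bounded there exists $M>0$ with $|x-y|\le M$ on $B\times K$, hence
\[
M^{-(N+2s)}\int_K\int_B |u_n(x)-u_m(x)-(u_n(y)-u_m(y))|^2\,dxdy\le \|u_n-u_m\|^2_{W_{\Omega,g}^{s,2}}.
\]
Combined with the $L^2(B)$-Cauchy property of the $u_n$'s, this forces $\{u_n|_K\}$ to be Cauchy in $L^2(K)$ by Fubini and the triangle inequality. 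Exhausting $\RR^N\setminus\Omega$ by countably many such compacts and diagonalizing yields a pointwise a.e.\ limit on all of $\RR^N\setminus\Omega$, which agrees with $|g|^{-1/2}h$ on $\{g>0\}$; I would glue this with the interior limit and call the combined function $u$.

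Finally I would apply Fatou's lemma to the three nonnegative integrands $|u_n|^2\mathbf{1}_\Omega$, $|g||u_n|^2\mathbf{1}_{\RR^N\setminus\Omega}$, and $|u_n(x)-u_n(y)|^2/|x-y|^{N+2s}$ to conclude $u\in W_{\Omega,g}^{s,2}$; then reapply Fatou to the differences $u_n-u_m$ (letting $m\to\infty$ for fixed $n$) in each of the three components to obtain $\|u_n-u\|_{W_{\Omega,g}^{s,2}}\to 0$. The whole construction hinges on the observation that the Gagliardo coupling between $\Omega$ and its exterior supplies a meaningful pointwise limit precisely where the weighted term degenerates.
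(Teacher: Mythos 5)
The paper does not prove this statement itself; it simply cites \cite[Proposition~3.1]{SDipierro_XRosOton_EValdinoci_2017a}. Your argument is a correct, self-contained proof and is in the same spirit as that reference: the decisive observation in both is that the Gagliardo integral in $\|\cdot\|_{W^{s,2}_{\Omega,g}}$ ranges over $\RR^{2N}\setminus(\RR^N\setminus\Omega)^2$, so it couples $\Omega$ with its exterior and thereby controls $u$ on $\{g=0\}\subset\RR^N\setminus\Omega$, where the weighted $L^2$ term is blind.

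Two small remarks. First, in the Cauchy-on-compacts step you may simply take $B=\Omega$ (the paper assumes $\Omega$ bounded with positive measure), which removes the need to introduce $B$ separately; the reverse triangle inequality in $L^2(\Omega\times K)$ for $(x,y)\mapsto (u_n-u_m)(x)$ versus $(x,y)\mapsto (u_n-u_m)(y)$ then gives
\[
|\Omega|^{1/2}\,\|u_n-u_m\|_{L^2(K)} \le \bigl(\operatorname{diam}(\Omega\cup K)\bigr)^{\frac{N+2s}{2}}\|u_n-u_m\|_{W^{s,2}_{\Omega,g}} + |K|^{1/2}\,\|u_n-u_m\|_{L^2(\Omega)},
\]
which is exactly the inequality you sketched. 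Second, in the final Fatou step it is worth stating explicitly that the a.e.\ convergence of a subsequence on all of $\RR^N$ is what makes the pointwise liminf available for all three integrands, including the double integral in $(x,y)$; you have this, and the conclusion $\|u_n-u\|_{W^{s,2}_{\Omega,g}}\to 0$ then follows from Cauchyness as you describe. No gaps.
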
 

Throughout the remainder of the paper, the measure $\mu$ is defined with $g$ replaced by 
$\kappa$. That is, $d\mu=\kappa dx$ (recall that $\kappa$ is assumed to be 
non-negative). We next state our notion of weak solution.

 \begin{definition}\label{def:weak_n}
  Let  $z \in L^2((0,T);L^2(\RR^N\setminus\Om,\mu))$. 
 A $u\in L^2((0,T);W_{\Om,\kappa}^{s,2})\cap H^1((0,T);( W_{\Om,\kappa}^{s,2})^\star)$ 
 is said to be a weak solution of \eqref{eq:Sn} if the identity
\begin{align}\label{we-so}
\langle \partial_t u, v \rangle + 
\int\int_{\RR^{2N}\setminus(\Omc)^2}&\frac{(u(x)-u(y))(v(x)-v(y))}{|x-y|^{N+2s}}\;dxdy+\int_{\Omc}\kappa uv\;dx\notag\\
&=
  \int_{\Omc}\kappa zv\;dx,
\end{align}
holds for every $v\in  W_{\Om,\kappa}^{s,2}$ and almost every $t \in (0,T)$. 
 \end{definition}
 
 Throughout the following, for $u,v\in W_{\Om, \kappa}^{s,2}$ we shall denote
 \begin{align*}
 \mathcal E(u,v):=\frac{C_{N,s}}{2}\int\int_{\RR^{2N}\setminus(\Omc)^2}&\frac{(u(x)-u(y))(v(x)-v(y))}{|x-y|^{N+2s}}\;dxdy+\int_{\Omc}\kappa uv\;dx.
 \end{align*}

Next we show the existence result.

\begin{theorem}\label{Theo-sol-Ro}
  Let $\kappa\in L^1(\Omc)\cap L^\infty(\Omc)$.
  Then for every $z\in L^2((0,T);L^2(\RR^N\setminus\Omega,\mu))$, 
  there exists a unique weak solution 
  $u\in L^2((0,T);W_{\Om,\kappa}^{s,2})\cap H^1((0,T);( W_{\Om, \kappa}^{s,2})^\star)$ 
  of \eqref{eq:Sn}. 
 \end{theorem}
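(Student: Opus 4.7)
My plan is to cast \eqref{eq:Sn} in the abstract Lions framework for linear parabolic equations on a Gelfand triple. The bilinear form $\mathcal E$ is bounded on $V := W_{\Om,\kappa}^{s,2}$ but only G\aa rding--coercive, so a standard exponential shift will be needed to invoke the classical existence theorem.

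First, I would take the Hilbert space $V$ above together with the pivot space $H := L^2(\Omega) \oplus L^2(\Omc,\mu)$ equipped with $\|u\|_H^2 := \|u\|_{L^2(\Omega)}^2 + \|u\|_{L^2(\Omc,\mu)}^2$. In view of \eqref{norm-e} the embedding $V \hookrightarrow H$ is continuous; its density (which one checks via approximation by smooth functions with finite Gagliardo seminorm) yields the Gelfand triple $V \hookrightarrow H \hookrightarrow V^*$.

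Second, I would verify the hypotheses of Lions' theorem. Continuity of $\mathcal E$ on $V\times V$ follows from Cauchy--Schwarz together with \eqref{norm-e}. A G\aa rding inequality holds with shift $\lambda=1$:
\[
  \mathcal E(u,u) + \|u\|_H^2 \;=\; \tfrac{C_{N,s}}{2}\,[u]^2 \;+\; 2\,\|u\|_{L^2(\Omc,\mu)}^2 \;+\; \|u\|_{L^2(\Omega)}^2 \;\ge\; \alpha\,\|u\|_V^2,
\]
where $[u]^2$ denotes the Gagliardo seminorm over $\RR^{2N}\setminus(\Omc)^2$ and $\alpha := \min\{1,C_{N,s}/2\}$. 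The source term $F(t):v\mapsto \int_{\Omc}\kappa z(t)\,v\,dx$ satisfies, by Cauchy--Schwarz in $L^2(\Omc,\mu)$,
\[
  |F(t)(v)| \;\le\; \|z(t)\|_{L^2(\Omc,\mu)}\,\|v\|_{L^2(\Omc,\mu)} \;\le\; \|z(t)\|_{L^2(\Omc,\mu)}\,\|v\|_V,
\]
so $F \in L^2((0,T);V^*)$ with $\|F\|_{L^2((0,T);V^*)} \le \|z\|_{L^2((0,T);L^2(\Omc,\mu))}$.

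Third, I would perform the substitution $u(t) =: e^{t}w(t)$, which turns \eqref{eq:Sn} into $\partial_t w + \mathcal A_1 w = e^{-t}F$ in $V^*$, where $\mathcal A_1: V \to V^*$ is induced by the now strictly coercive form $\mathcal E(\cdot,\cdot) + (\cdot,\cdot)_H$. Lions' theorem then produces a unique $w \in L^2((0,T);V) \cap H^1((0,T);V^*)$ with $w(0,\cdot)=0$, and $u := e^{t}w$ yields the desired weak solution. The initial condition is attained in $H$ thanks to the continuous embedding $L^2((0,T);V) \cap H^1((0,T);V^*) \hookrightarrow C([0,T];H)$; uniqueness follows from testing the homogeneous equation with $v=u$ and applying Gr\"onwall; the energy estimate $\|u\|_{L^2((0,T);V)} + \|u\|_{H^1((0,T);V^*)} \le C\,\|z\|_{L^2((0,T);L^2(\Omc,\mu))}$ comes from the same test.

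The main subtlety I anticipate is correctly identifying the pivot space $H$: because the Robin term involves the measure $d\mu = \kappa\,dx$ concentrated on $\Omc$, the solution must be tracked simultaneously on $\Omega$ (where the equation lives) and on $\Omc$ (where the exterior data interact), and verifying density of $V$ in $H$ together with the compatibility of the $H$-to-$V^*$ identification requires some care. Once the Gelfand triple is set up coherently with $\mathcal E$, the remainder of the argument is a routine application of Lions' theorem.
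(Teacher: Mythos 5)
The choice of pivot space is where the argument breaks. In the Lions framework the duality pairing $\langle\partial_t u,v\rangle_{V^*,V}$ is determined by the identification of $H$ with its dual: for smooth $u$ and your $H=L^2(\Omega)\oplus L^2(\Omc,\mu)$ one has
\[
\langle\partial_t u,v\rangle \;=\; \int_\Omega(\partial_t u)\,v\;dx \;+\; \int_{\Omc}\kappa(\partial_t u)\,v\;dx.
\]
But the weak formulation of \eqref{eq:Sn} (obtained by multiplying $\partial_t u+(-\Delta)^su=0$ by $v$, integrating over $\Omega$, applying \eqref{Int-Part}, and inserting $\mathcal{N}_su+\kappa u=\kappa z$) contains \emph{only} $\int_\Omega(\partial_t u)\,v\,dx$: the time derivative in the PDE acts on $Q=(0,T)\times\Omega$, not on $\Sigma$. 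Testing your Lions solution against $v\in\mathcal D(\Omc)$ shows it actually satisfies the \emph{dynamic} exterior condition $\kappa\,\partial_t u+\mathcal N_su+\kappa u=\kappa z$ in $\Sigma$, which is a different problem. (The missing initial datum on $\Omc$ in \eqref{eq:Sn} is another symptom: a dynamic exterior condition would need one.)

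One cannot repair this by choosing $H=L^2(\Omega)$ as pivot, because the restriction map $W_{\Om,\kappa}^{s,2}\to L^2(\Omega)$ is not injective, so no Gelfand triple exists. The system is genuinely a \emph{degenerate} parabolic problem: the time derivative acts on $L^2(\Omega)$ while the state lives in the larger space $W_{\Om,\kappa}^{s,2}$, with the exterior values coupled algebraically rather than dynamically. This is precisely why the paper's proof does not invoke Lions' theorem: it instead defines $A$ on $L^2(\Omega)\times L^2(\Omc,\mu)$ with the \emph{non-dense} domain $D(A)\subset W_{\Om,\kappa}^{s,2}\times\{0\}$, shows $A$ is resolvent positive, and appeals to the theory of once-integrated semigroups \cite{ABHN} to solve the abstract Cauchy problem for $z\in L^2$ data. (Your G\aa rding and continuity verifications are correct as far as they go, and a lifting argument of the type you might fall back on does work when $z\in H^1$ in time, as in Proposition~\ref{pro-sol-Ro}; but for $z\in L^2$ the degeneracy cannot be avoided and a different tool is needed.)
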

 
 \begin{proof}
 We prove the result in several steps.
 
 {\bf Step 1}. Define the operator  $A$ in $L^2(\Omega)\times L^2(\Omc,\mu)$ as follows:
 \begin{align*}
 \begin{cases}
 D(A):=\Big\{(u,0):\;u \in W_{\Om, \kappa}^{s,2}, \;(-\Delta)^su\in L^2(\Omega),\; \mathcal N_su\in L^2(\Omc,\mu)\Big\},\\
 A(u,0)=\left(-(-\Delta)^su, -\mathcal N_s u-\kappa u\right).
 \end{cases}
 \end{align*}
 Let $(f,g)\in L^2(\Omega)\times L^2(\Omc,\mu)$. We  claim that $(u,0)\in D(A)$ with $-A(u,0)=(f,g)$ if and only if
 \begin{align}\label{RP-F}
 \mathcal E(u,v)=\int_{\Omega}fv\;dx+\int_{\Omc}gv\;d\mu,
 \end{align}
for all $v\in W_{\Om, \kappa}^{s,2}$.
 Indeed, we have that $(u,0)\in D(A)$ with $-A(u,0)=(f,g)$ if and only if $u$ is a weak solution of the elliptic problem 
 
 \begin{align}\label{RPE}
 \begin{cases}
 (-\Delta)^su=f\;\;\;&\mbox{ in }\;\Omega,\\
 \mathcal N_su+\kappa u=\kappa g\;\;\;&\mbox{ in }\;\Omc.
 \end{cases}
 \end{align}
 It has been shown in \cite{HAntil_RKhatri_MWarma_2018a} (see also \cite{CLR-MW}) that $u$ solves \eqref{RPE} if and only if \eqref{RP-F} holds and the claim is proved.
 
 {\bf Step 2}. Firstly, let $\lambda>0$ be a real number. We show that the operator $\lambda-A: D(A)\to L^2(\Omega)\times L^2(\Omc,\mu)$ is invertible. It is clear that for every $\lambda>0$ there is a constant $\alpha>0$ such that
 
 \begin{align}\label{28}
 \lambda\int_{\Omega}|u|^2\;dx+\mathcal E(u,u)\ge \alpha\|u\|_{W_{\Om, \kappa}^{s,2}}^2,
 \end{align}
 for all $u\in W_{\Om, \kappa}^{s,2}$.  Hence by Lax-Milgram Theorem, for every $(f,g)\in  L^2(\Omega)\times L^2(\Omc,\mu)$  there exists a unique $u\in W_{\Om, \kappa}^{s,2}$ such that
  
 \begin{align}\label{29}
 \lambda\int_{\Omega}uv\;dx+\mathcal E(u,v)=\int_{\Omega}fv\;dx+\int_{\Omc}gv\;d\mu,
 \end{align}
 for all $v\in W_{\Om, \kappa}^{s,2}$. By Step 1, this means  that there is a unique $u\in  W_{\Om, \kappa}^{s,2}$ with $(u,0)\in D(A)$ and
 \begin{align*}
 (\lambda-A)(u,0)=(\lambda u,0)-A(u,0)=(f,g).
 \end{align*}
 We have shown that $\lambda-A: D(A)\to  L^2(\Omega)\times L^2(\Omc,\mu)$ is a bijection for every $\lambda>0$.
 
 Secondly, assume now that $f\le 0$ a.e. in $\Omega$ and $g\le 0$ $\mu$-a.e. in $\Omc$. Let the function $(u,0):=(\lambda-A)^{-1}(f,g)$ and set $v:=u^+ :=\max\{u,0\}$. It follows from  \cite{War} that $u^+\in  W_{\Om, \kappa}^{s,2}$. Let $u^-:=\max\{-u,0\}$. Since
 \begin{align*}
 (u^-(x)-u^-(y))(u^+(x)-u^+(y))=&u^-(x)u^+(x)-u^-(x)u^+(y)-u^-(y)u^+(x)+u^-(y)u^+(y)\\
 =&-u^-(x)u^+(y)-u^-(y)u^+(x)\le 0,
 \end{align*}
 we have that $\mathcal E(u^-,u^+)\le 0$. Hence,
 \begin{align*}
 \mathcal E(u,v)=\mathcal E(u^+-u^-,u^+)=\mathcal E(u^+,u^+)-\mathcal E(u^-,u^+)\ge 0.
 \end{align*}
  Then by \eqref{29}, we have that
  \begin{align*}
  0\le \lambda\int_{\Omega}|u|^2\;dx+\mathcal E(u,u^+)=\int_{\Omega}fu^+\;dx+\int_{\Omc}gu^+\;d\mu\le 0.
  \end{align*}
By \eqref{28} this implies that $u^+=0$, that is, $u\le 0$ almost everywhere. We have shown that the resolvent $(\lambda-A)^{-1}$ is a positive operator. Since every positive linear operator is continuous (see e.g., \cite{ArNi}), we can deduce that $(\lambda-A)$ is in fact invertible.

Thirdly, we have in particular shown that the operator $A$ is closed since  $-A$ is the operator associated with the closed form $\mathcal E$. Hence $D(A)$  endowed with the graph norm is a Banach space  and by definition of $A$, we have that $D(A)\subset W_{\Om, \kappa}^{s,2}\times\{0\}$. Since both of these spaces are continuously embedded into $ L^2(\Omega)\times L^2(\Omc,\mu)$, we can deduce from the closed graph theorem that $D(A)$ is continuously embedded into $ W_{\Om, \kappa}^{s,2}\times\{0\}$.

{\bf Step 3}. Now since $L^2(\Omega)\times L^2(\Omc,\mu)$ is a Banach lattice with order continuous norm and by Step 2 the operator $A$ is resolvent positive,  it follows from \cite[Theorem 3.11.7]{ABHN} that $A$ generates a once integrated semigroup on $L^2(\Omega)\times L^2(\Omc,\mu)$. Hence using the theory of integrated semigroups and abstract Cauchy problems studied in \cite[Section 3.11]{ABHN} and proceeding as in \cite[Section 2]{Nittka}, we can deduce that for every $z\in L^2((0,T);L^2(\Omc,\mu))$, the problem \eqref{eq:Sn} has a unique weak solution. The proof is finished.
 \end{proof}
 
We conclude this section by showing that if $z$ is more regular in the time variable, then the existence of weak solutions can be easily proved without using the theory of integrated semigroups as in the proof of Theorem \ref{Theo-sol-Ro}.

 \begin{proposition}\label{pro-sol-Ro}
  Let $\kappa\in L^1(\Omc)\cap L^\infty(\Omc)$.
  Then for every $z\in H^1((0,T);L^2(\RR^N\setminus\Omega,\mu))$, 
  there exists a unique weak solution 
  $u\in L^2((0,T);W_{\Om,\kappa}^{s,2})\cap H^1((0,T);( W_{\Om, \kappa}^{s,2})^\star)$ 
  of \eqref{eq:Sn}. 
 \end{proposition}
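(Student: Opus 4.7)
The plan is to avoid the integrated-semigroup machinery of Theorem~\ref{Theo-sol-Ro} and apply J.L. Lions' classical existence theorem for abstract parabolic variational problems directly. The $H^1$-in-time regularity on $z$ will make the verification of hypotheses immediate; in fact $z \in L^2((0,T);L^2(\Omc,\mu))$ would already suffice, but the stronger assumption is what justifies calling the proof ``easy'' compared to Theorem~\ref{Theo-sol-Ro}.

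First I would set up the Gelfand triple $V \hookrightarrow H \hookrightarrow V^{*}$ with $V := W_{\Omega,\kappa}^{s,2}$ and pivot $H := L^{2}(\Omega)$, where the embedding $V\hookrightarrow H$ is the restriction $v\mapsto v|_{\Omega}$. This embedding is continuous by definition \eqref{norm-e} and dense since $C_{c}^{\infty}(\Omega)\subset V$ (extended by zero outside $\Omega$) is dense in $L^{2}(\Omega)$. The bilinear form $\mathcal{E}$ is symmetric and continuous on $V\times V$ by Cauchy--Schwarz and the fact that $\kappa\in L^{\infty}(\Omc)$, while the argument leading to \eqref{28} in the proof of Theorem~\ref{Theo-sol-Ro} supplies constants $\lambda\ge 0$ and $\alpha>0$ such that $\mathcal{E}(v,v)+\lambda\|v\|_{L^{2}(\Omega)}^{2}\ge\alpha\|v\|_{V}^{2}$ for every $v\in V$.

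Second, I would recast the right-hand side as the functional $F_{z}(t)(v):=\int_{\Omc}\kappa z(t)v\,dx=(z(t),v)_{L^{2}(\Omc,\mu)}$, which satisfies
\[
|F_{z}(t)(v)|\le \|z(t)\|_{L^{2}(\Omc,\mu)}\,\|v\|_{L^{2}(\Omc,\mu)}\le \|z(t)\|_{L^{2}(\Omc,\mu)}\,\|v\|_{V},
\]
so that $F_{z}\in L^{2}((0,T);V^{*})$. Now the classical Lions theorem for parabolic variational problems (see e.g.\ Dautray--Lions, Vol.~5, Ch.~XVIII, \S3, Thm.~1) yields a unique $u\in L^{2}((0,T);V)\cap H^{1}((0,T);V^{*})$ with $u(0)=0$ in $L^{2}(\Omega)$ (which is meaningful because such $u$ automatically lies in $C([0,T];L^{2}(\Omega))$) such that
\[
\langle \partial_{t}u,v\rangle+\mathcal{E}(u,v)=\langle F_{z}(t),v\rangle\qquad\forall v\in V,\ \text{a.e.\ }t\in(0,T).
\]
This is exactly the weak formulation \eqref{we-so} of Definition~\ref{def:weak_n}, because the Robin term $\int_{\Omc}\kappa u v\,dx$ is already absorbed into $\mathcal{E}(u,v)$ and the data term $\int_{\Omc}\kappa z v\,dx$ is precisely $F_{z}(t)(v)$.

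The only nontrivial ingredient is the coercivity of $\mathcal{E}$ on $V$ modulo $\|\cdot\|_{L^{2}(\Omega)}^{2}$; this is the main obstacle, but it is essentially inherited from the estimate \eqref{28} already established for the elliptic theory. Uniqueness is then immediate by applying coercivity to the difference of two solutions and invoking Gronwall. The extra $H^{1}$-in-time regularity of $z$ plays no essential role here; it is invoked merely to stress that under this mild extra assumption, the standard Hilbert-space evolution theory applies directly, whereas the weaker hypothesis $z\in L^{2}((0,T);L^{2}(\Omc,\mu))$ of Theorem~\ref{Theo-sol-Ro} was handled through the more delicate integrated-semigroup construction on the product space $L^{2}(\Omega)\times L^{2}(\Omc,\mu)$.
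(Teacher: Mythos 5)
Your proof has a genuine gap at its foundation: the triple $V:=W_{\Omega,\kappa}^{s,2}\hookrightarrow H:=L^2(\Omega)\hookrightarrow V^*$ is \emph{not} a Gelfand triple, because the map $v\mapsto v|_\Omega$ from $V$ into $L^2(\Omega)$ fails to be injective. Indeed, take any bounded measurable $v$ supported in a compact subset of $\Omc$ that stays a positive distance away from $\partial\Omega$; then $v|_\Omega=0$, $v\in L^2(\Omc,\mu)$, and the Gagliardo seminorm over $\RR^{2N}\setminus(\Omc)^2$ is finite (the only contribution comes from $\Omega\times\Omc$ and $\Omc\times\Omega$ and the kernel is bounded there), so $v$ is a nonzero element of $V$ with $v|_\Omega=0$. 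Without injectivity you cannot identify $V$ as a dense subspace of $H$, and the Lions/Dautray--Lions theorem you cite does not apply: its duality pairing must coincide with the $H$-inner product on $V\times H$, and for the functions above that pairing is degenerate. This is not a cosmetic defect but the central structural feature of \eqref{eq:Sn}: the time derivative acts only on $\Omega$, while in $\Omc$ the equation $\mathcal{N}_s u+\kappa u=\kappa z$ is a purely algebraic (elliptic) constraint at each $t$. Testing the weak formulation \eqref{we-so} against $v\in V$ with $v|_\Omega=0$ recovers exactly this constraint with no $\partial_t$ term, which is the hallmark of a degenerate evolution equation. Applying a non-degenerate parabolic existence theorem here is precisely what you cannot do.

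This degeneracy is also why the paper needed two different proofs. Theorem~\ref{Theo-sol-Ro} builds the generator $A$ on the product space $L^2(\Omega)\times L^2(\Omc,\mu)$ and finds that $D(A)\subset W_{\Omega,\kappa}^{s,2}\times\{0\}$ is not dense, forcing the use of resolvent-positive operators and once-integrated semigroups. Proposition~\ref{pro-sol-Ro} sidesteps the degeneracy by a lifting: for $z\in H^1((0,T);L^2(\Omc,\mu))$ one first solves the stationary Robin problem \eqref{RP} to produce $\tilde z\in H^1((0,T);W_{\Omega,\kappa}^{s,2})$ enforcing the exterior constraint; the difference $w=u-\tilde z$ then solves a \emph{homogeneous} Robin parabolic problem with source $-\partial_t\tilde z\in L^2((0,T);L^2(\Omega))$, and this \emph{is} a genuine Cauchy problem on $L^2(\Omega)$ for the Robin Laplacian $(-\Delta)_R^s$, whose semigroup settles it. In particular, your closing remark that the $H^1$-in-time regularity of $z$ ``plays no essential role'' is exactly backwards: it is what makes $\partial_t\tilde z$ land in $L^2((0,T);L^2(\Omega))$ and lets the lifting close. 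If your argument really worked under $z\in L^2((0,T);L^2(\Omc,\mu))$ alone, Theorem~\ref{Theo-sol-Ro} and its integrated-semigroup machinery would be superfluous; that tension should have been a warning sign.
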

 
 \begin{proof}
We proceed as in the proof of Theorem \ref{Theo35}. First, assume that $z$ does not depend on time and   $z\in L^2(\Omc,\mu)$. Let $\tilde z$ be the solution of the elliptic Robin problem

\begin{equation}\label{RP}
\begin{cases}
(-\Delta)^s\tilde z=0\quad&\mbox{ in } \Omega,\\
\mathcal N_s\tilde z+\kappa \tilde z=\kappa\tilde z &\mbox{ in } \RR^N\setminus\Omega,
\end{cases}
\end{equation}
 in the sense that $\tilde z\in W_{\Om,\kappa}^{s,2}$ and
 \begin{align}\label{form-F}
\int\int_{\RR^{2N}\setminus(\Omc)^2}\frac{(\tilde z(x)-\tilde z(y))(v(x)-v(y))}{|x-y|^{N+2s}}\;dxdy
  +\int_{\Omc}\kappa \tilde zv\;dx=\int_{\Omc}\kappa zv\;dx,
  \end{align}
  for every $v\in W_{\Om,\kappa}^{s,2}$. Under our assumptions, it has been shown in \cite{HAntil_RKhatri_MWarma_2018a}
 that \eqref{RP} has a  solution $\tilde z$.
  
  Next, assume that $z\in H^1((0,T); L^2(\Omc,\mu))$. Since in this case $\partial_t\tilde z$ will be a solution of \eqref{RP} with $z$ replaced by $\partial_tz$, then we can deduce that \eqref{RP} has a unique solution $\tilde z\in H^1((0,T); W_{\Om,\kappa}^{s,2})$. 
  
Consider the following parabolic problem
  \begin{equation}\label{C-RP}
\begin{cases}
\partial_t w+(-\Delta)^sw=-\partial_t\tilde z\quad &\mbox{ in } Q,\\
\mathcal N_sw+\kappa w=0 &\mbox{ in } \Sigma,\\
w(0,\cdot)=0&\mbox{ in }\Omega.
\end{cases}
\end{equation}
Let $(-\Delta)_R^s$ be the realization of $(-\Delta)^s$ with the zero Robin exterior condition $\mathcal N_sw+\kappa w=0$ in $\Omc$. 
Then the parabolic problem \eqref{C-RP} can be rewritten as the following Cauchy problem
\begin{equation*}
\begin{cases}
\partial_t w+(-\Delta)_R^sw=-\partial_t\tilde z\quad &\mbox{ in } Q,\\
w(0,\cdot)=0&\mbox{ in }\Omega.
\end{cases}
\end{equation*}
It has been shown in \cite{CLR-MW} that the operator $-(-\Delta)_R^s$ generates a strongly continuous semigroup $(e^{-t(-\Delta)_R^s})_{t\ge 0}$ in $L^2(\Omega)$.
Hence, using semigroup theory, we can deduce that \eqref{C-RP} has a unique weak solution $w$ that belongs to $L^2((0,T);W_{\Om,\kappa}^{s,2})\cap H^1((0,T);( W_{\Om, \kappa}^{s,2})^\star)$ and is given by
\begin{align*}
w(t,x)=-\int_0^te^{-(t-\tau)(-\Delta)_R^s}\partial_{\tau}\tilde z(\tau,x)\;dx.
\end{align*}
 It is clear that $u:=w+\tilde z$ is the unique weak solution of \eqref{eq:Sn}.  The proof is finished.
 \end{proof}

\section{Exterior Optimal Control Problems}\label{s:exctrp}

The purpose of this section is study the Dirichlet 
and the Robin  optimal control problems \eqref{eq:dcp}  and \eqref{eq:ncp}, respectively. These are the 
subjects of Sections~\ref{s:dir} and \ref{s:rob}, respectively.

\subsection{Fractional Dirichlet Exterior Control Problem}
\label{s:dir}

We begin by defining the function spaces $Z_D$ and $U_D$. We let 
    \[
        Z_D := L^2((0,T);L^2(\Omc)) , \quad U_D := L^2((0,T);L^2(\Om)) . 
    \]
Due to Theorem~\ref{thm:vwdexist}, the control-to-state (solution) map
    \[
        S : Z_D \rightarrow U_D , \quad z \mapsto Sz =: u,
    \]       
is well-defined, linear and continuous. Furthermore, for $z \in Z_D$, 
we have that $u := Sz \in L^2((0,T);L^2(\RR^N))$. Thus we can write the 
so-called reduced Dirichlet exterior parabolic optimal control problem as follows:
    \begin{equation}\label{eq:rpDir}
        \min_{z \in Z_{ad,D}} \mathcal{J}(z) := J(Sz) + \frac{\xi}{2}\|z\|^2_{Z_D} . 
    \end{equation}
Next, we state the well-posedness result for \eqref{eq:dcp} and equivalently 
\eqref{eq:rpDir}. 
\begin{theorem}\label{thm:docexist}
Let $Z_{ad,D}$ be a closed and convex subset of $Z_D$. Let either $\xi > 0$
or $Z_{ad,D}$ be bounded and let $J: U_D \rightarrow \RR$ be weakly lower-semicontinuous.
Then there exists a solution $\bar{z}$ to \eqref{eq:rpDir} and equivalently
\eqref{eq:dcp}. If either $J$ is convex and $\xi > 0$ or $J$ is strictly convex
and $\xi \ge 0$, then $\bar{z}$ is unique. 
\end{theorem}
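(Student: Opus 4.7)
The plan is to prove existence by the direct method of the calculus of variations applied to the reduced functional $\mathcal{J}$, and then deduce uniqueness from strict convexity. Throughout, I shall use that $S: Z_D \to U_D$ is linear and continuous (Theorem~\ref{thm:vwdexist}), hence weakly-to-weakly continuous.

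\textbf{Existence.} First I would check that the infimum $m := \inf_{z \in Z_{ad,D}} \mathcal{J}(z)$ is finite: since $Z_{ad,D}$ is nonempty (we may assume this, else there is nothing to prove) and $J$ is weakly lower-semicontinuous (hence bounded below on any weakly compact set, but more simply we only need one value to see $m < \infty$; boundedness below follows because a weakly l.s.c.\ function on a weakly compact set attains its minimum — this is in fact the output, not a hypothesis, so I would instead argue that if $J$ were unbounded below then the lower bound argument below supplies the contradiction). Take a minimizing sequence $\{z_n\} \subset Z_{ad,D}$ with $\mathcal{J}(z_n) \to m$. The key boundedness step splits into two cases: if $\xi > 0$, then $\frac{\xi}{2}\|z_n\|^2_{Z_D} \le \mathcal{J}(z_n) - \inf J + o(1)$, so $\{z_n\}$ is bounded in $Z_D$ (and in particular $J$ is bounded below along the sequence, which we justify a posteriori by the same estimate applied to a fixed admissible element); if instead $Z_{ad,D}$ is bounded, boundedness of $\{z_n\}$ is immediate.

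\textbf{Weak limit and lower semicontinuity.} Since $Z_D = L^2((0,T);L^2(\Omc))$ is a reflexive Hilbert space, I may extract a subsequence (not relabeled) with $z_n \rightharpoonup \bar{z}$ in $Z_D$. Because $Z_{ad,D}$ is convex and closed, it is weakly closed by Mazur's lemma, so $\bar{z} \in Z_{ad,D}$. By linearity and continuity of $S$, one has $Sz_n \rightharpoonup S\bar{z}$ in $U_D$. The weak lower semicontinuity of $J$ on $U_D$ gives
\begin{equation*}
  J(S\bar{z}) \le \liminf_{n \to \infty} J(Sz_n),
\end{equation*}
and the weak lower semicontinuity of the norm on the Hilbert space $Z_D$ yields $\|\bar{z}\|_{Z_D}^2 \le \liminf_n \|z_n\|_{Z_D}^2$. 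Combining these,
\begin{equation*}
  \mathcal{J}(\bar{z}) \le \liminf_{n\to\infty} \Bigl( J(Sz_n) + \tfrac{\xi}{2}\|z_n\|_{Z_D}^2 \Bigr) = m,
\end{equation*}
so $\bar{z}$ is a minimizer.

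\textbf{Uniqueness.} This follows from strict convexity of $\mathcal{J}$. If $J$ is strictly convex (and $\xi \ge 0$), then $z \mapsto J(Sz)$ is strictly convex because $S$ is linear and injectivity is not needed once we observe that two distinct minimizers $\bar{z}_1 \neq \bar{z}_2$ would yield $Sz_1 = Sz_2$ (otherwise strict convexity of $J$ alone gives a contradiction); but in the equal-state case the penalty $\frac{\xi}{2}\|\cdot\|_{Z_D}^2$ is itself strictly convex (for $\xi \ge 0$, strictly convex means $\xi > 0$ actually is needed, so one must be slightly careful — however if $\xi = 0$ and $J$ is strictly convex then $Sz_1 \neq Sz_2$ is forced, giving the contradiction directly). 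If instead $J$ is merely convex and $\xi > 0$, then $\mathcal{J}$ is the sum of a convex function and the strictly convex functional $\frac{\xi}{2}\|\cdot\|_{Z_D}^2$, hence strictly convex, and the minimizer on the convex set $Z_{ad,D}$ is unique.

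No step here is a genuine obstacle; the only point requiring mild care is disentangling the two uniqueness regimes, since strict convexity of the composed term $J \circ S$ need not follow from strict convexity of $J$ alone without injectivity of $S$ — but this is handled by the dichotomy argument above using either the state term or the penalty term to force a contradiction.
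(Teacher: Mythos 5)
Your proof follows essentially the same route as the paper: direct method, minimizing sequence, boundedness from coercivity ($\xi>0$) or from boundedness of $Z_{ad,D}$, weak compactness of bounded sets in the Hilbert space $Z_D$, weak closedness of $Z_{ad,D}$ (closed + convex), weak-to-weak continuity of the linear bounded $S$, and weak lower semicontinuity of $J$ and of the norm. The paper additionally passes to the limit directly in the very-weak formulation \eqref{eq:ocexist} to exhibit that $(\bar u,\bar z)$ satisfies the constraint, but this is redundant given that $S$ is already defined and weakly continuous, so you lose nothing by omitting it.

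One point deserves a sharper look. In the uniqueness regime ``$J$ strictly convex, $\xi\ge 0$'' you correctly observe that strict convexity of $J$ does not transfer to $J\circ S$ without injectivity of $S$, and you set up the right dichotomy on whether $S z_1 = S z_2$. But the sentence ``if $\xi=0$ and $J$ is strictly convex then $S z_1 \ne S z_2$ is forced'' is asserted, not argued, and in fact it is not forced: if $S$ has a nontrivial kernel and $\xi=0$, two distinct $z_1, z_2$ with $S z_1 = S z_2$ give $\mathcal{J}(z_1)=\mathcal{J}(z_2)$ and equality all along the segment, so uniqueness of the optimal \emph{control} (as opposed to the optimal \emph{state}) genuinely fails. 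To make this case go through one needs $S$ to be injective, which the paper does not establish (though it is plausible via approximate-controllability / unique continuation results of the type proved in \cite{warma2018approximate}). The paper's own proof is terser still and glosses over the same point, so your instinct to flag it was right — but the resolution you offer is a handwave. The regime ``$J$ convex, $\xi>0$'' is clean, as you note, since the penalty $\frac{\xi}{2}\|\cdot\|^2_{Z_D}$ alone supplies strict convexity on the control variable.

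Finally, a minor remark on the coercivity step: obtaining $\{z_n\}$ bounded from $\xi>0$ tacitly uses that $J$ is bounded below (or at least does not decay to $-\infty$ quadratically along the minimizing sequence). Your parenthetical discussion circles this without pinning it down; the paper also leaves it implicit. For a tracking-type $J\ge 0$ this is automatic, but strictly speaking it is an extra (mild) hypothesis.
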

\begin{proof}
The proof is based on the so-called direct method or the Weierstrass theorem 
\cite[Theorem~3.2.1]{HAttouch_GButtazzo_GMichaille_2014a}. We sketch the proof
here for completeness. For the functional $\mathcal{J} : Z_{ad,D} \rightarrow \mathbb{R}$, it is
possible to construct a minimizing sequence 
$\{z_n\}_{n\in\mathbb{N}}$ (see \cite[Theorem~3.2.1]{HAttouch_GButtazzo_GMichaille_2014a}) such that 
$\inf_{z\in Z_{ad,D}}\mathcal{J}(z) = \lim_{n\rightarrow\infty}
\mathcal{J}(z_n)$. If $\xi > 0$ or $Z_{ad,D} \subset Z_D$ is bounded, then
$\{z_n\}_{n\in\mathbb{N}}$ is a bounded sequence in $Z_D$ which is a Hilbert
space. As a result, we have that (up to a subsequence if necessary) 
$z_n \rightharpoonup \bar{z}$ (weak convergence) in $Z_D$ as $n\rightarrow
\infty$. Finally since $Z_{ad,D}$ is closed and convex,
hence is weakly closed, we have that $\bar{z} \in Z_{ad,D}$. 

It then remains to show that $(S\bar{z},\bar{z})$ fullfills the state equation
according to Definition~\ref{def:vweak_d} and $\bar{z}$ is a minimizer to 
\eqref{eq:rpDir}. In order to show that $(S\bar{z},\bar{z})$ fulfills the 
state equation, we need to focus on the identity
    \begin{equation}\label{eq:ocexist}
        \int_{Q} 
         u_n \left(-\partial_t v + (-\Delta)^s v \right)\;dxdt
         = - \int_{\Sigma} z_n \mathcal{N}_s v\;dxdt
    \end{equation}
    for all $v \in L^2((0,T);V) \cap H^1((0,T);L^2(\Om))$ with $v(T,\cdot) = 0$
    and for a.e. $t\in (0,t)$, as $n\rightarrow
    \infty$. Since $u_n := Sz_n \rightharpoonup S\bar{z} =: \bar{u}$
    in $U_D$ as $n\rightarrow\infty$ and $z_n\rightharpoonup \bar{z}$ in 
    $Z_D$ as $n\rightarrow\infty$, we can immediately take the limit 
    and conclude that $(\bar{u},\bar{z}) \in U_D\times Z_{ad,D}$ fulfills
    the state equation according to Definition~\ref{def:vweak_d}. 

    Next, that $\bar{z}$ is the minimizer of \eqref{eq:rpDir} follows from 
    the fact that $\mathcal{J}$ is weakly lower semicontinuous: 
    $\mathcal{J}$ is the sum of two weakly lower semicontinuous functions 
    (recall that the norm is continuous and convex therefore weakly lower 
    semicontinuous). 
    
    Finally, uniqueness of $\bar{z}$ follows from the stated assumptions 
    on $J$ and $\xi$ which leads to strict convexity of $\mathcal{J}$. The proof is finished.
\end{proof}

In order to derive the first order necessary optimality conditions, we 
need an expression of the adjoint operator $S^*$. We discuss this next. We notice that for every measurable set $E\subset\RR^N$, we have that
$L^2((0,T);L^2(E))=L^2((0,T)\times E)$ with equivalent norms.

\begin{lemma}\label{lem:Sstar}
    The adjoint operator $S^* : U_D \rightarrow Z_D$ for the state equation 
    \eqref{eq:Sd} is given by                 
    \[
        S^*w = -\mathcal{N}_s p \in Z_D , 
    \]
    where $w \in U_D$ and $p \in \mathbb{U}_0$ is the weak solution to the 
    problem 
    \begin{equation}\label{adj0}
      \begin{cases}
        -\partial_t p + (-\Delta)^s p &= w \quad \mbox{in } Q, \\
                    p &= 0           \quad \mbox{in } \Sigma ,  \\
                    p(T,\cdot) & = 0 \quad \mbox{in } \Om . 
     \end{cases}                
    \end{equation}     
\end{lemma}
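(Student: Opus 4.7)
The plan is to identify $S^{\ast}$ by computing the $L^{2}(Q)$ duality pairing $\int_{Q}(Sz)w\,dxdt$ for arbitrary $z\in Z_{D}$ and $w\in U_{D}$, and then rewriting this pairing as an integral against $z$ over $\Sigma$. The natural candidate for the test function in the very-weak identity \eqref{eq:vw_d} is precisely the solution $p$ of the backward problem \eqref{adj0}, because then $-\partial_{t}p+(-\Delta)^{s}p$ is exactly $w$.

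First I would solve \eqref{adj0}. Via the time reversal $\tilde p(t,\cdot):=p(T-t,\cdot)$, the problem \eqref{adj0} becomes a forward Cauchy problem of the form \eqref{eq:Sd_mod} with right-hand side $w(T-\cdot,\cdot)\in L^{2}((0,T);L^{2}(\Omega))\hookrightarrow L^{2}((0,T);W^{-s,2}(\overline\Omega))$. Proposition~\ref{prop:weak_Dir} then gives a unique weak solution $p\in\mathbb{U}_{0}$ depending continuously on $w$, so in particular $p\in L^{2}((0,T);W_{0}^{s,2}(\overline\Omega))$ and $p=0$ on $\Sigma$, with $p(T,\cdot)=0$.

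Next I would upgrade the regularity of $p$ so that it is an admissible test function in Definition~\ref{def:vweak_d}, that is, $p\in L^{2}((0,T);V)\cap H^{1}((0,T);L^{2}(\Omega))$. Since $w\in L^{2}(Q)$ (strictly stronger than $L^{2}((0,T);W^{-s,2}(\overline\Omega))$), this is exactly the maximal $L^{2}$-regularity statement for the analytic semigroup $(e^{-t(-\Delta)_{D}^{s}})_{t\ge 0}$ generated on $L^{2}(\Omega)$ by Proposition~\ref{semigroup}: one obtains $\partial_{t}p,(-\Delta)_{D}^{s}p\in L^{2}(Q)$, so in particular $(-\Delta)^{s}p\in L^{2}((0,T);L^{2}(\Omega))$ and hence $p(t,\cdot)\in V$ for a.e.\ $t$. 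With this in hand, I would plug $v=p$ into \eqref{eq:vw_d} for $u=Sz$ to obtain
\begin{equation*}
\int_{Q}(Sz)\,w\,dxdt=\int_{Q}u(-\partial_{t}p+(-\Delta)^{s}p)\,dxdt=-\int_{\Sigma}z\,\mathcal{N}_{s}p\,dxdt.
\end{equation*}
Finally, Lemma~\ref{lem:Nmap} applied pointwise in $t$ gives $\mathcal{N}_{s}p\in L^{2}((0,T);L^{2}(\Omega^{c}))=Z_{D}$, with continuous dependence on $\|p\|_{L^{2}((0,T);W^{s,2}(\mathbb{R}^{N}))}$ and hence on $\|w\|_{U_{D}}$. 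The identity above reads $\langle Sz,w\rangle_{U_{D}}=\langle z,-\mathcal{N}_{s}p\rangle_{Z_{D}}$, from which $S^{\ast}w=-\mathcal{N}_{s}p$ follows.

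The main obstacle is the regularity step: the abstract weak theory only places $p$ in $\mathbb{U}_{0}$, which by itself does not make $\mathcal{N}_{s}p$ or $(-\Delta)^{s}p$ individually meaningful as $L^{2}$ functions. Leveraging that $w\in L^{2}(Q)$ (rather than merely in the dual space) is essential, and invoking the analyticity of $(e^{-t(-\Delta)_{D}^{s}})_{t\ge 0}$ to obtain the parabolic $L^{2}$-maximal regularity is the key ingredient. Once that regularity is secured, the rest is a clean duality argument that parallels the classical adjoint calculation for Dirichlet boundary control, with the nonlocal normal derivative $\mathcal{N}_{s}$ playing the role of the normal derivative $\partial_{\nu}$.
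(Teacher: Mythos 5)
Your proof is correct and takes essentially the same duality route as the paper: solve the backward problem \eqref{adj0}, use $p$ as the test function in the very-weak identity \eqref{eq:vw_d} for $u=Sz$, and read off $S^{*}w=-\mathcal{N}_{s}p$. The one thing you do that the paper elides is justify that $p$ is an admissible test function in Definition~\ref{def:vweak_d} (i.e.\ $p\in L^{2}((0,T);V)\cap H^{1}((0,T);L^{2}(\Omega))$ rather than merely $p\in\mathbb{U}_{0}$) by invoking maximal $L^{2}$-regularity of the analytic semigroup generated by $-(-\Delta)_{D}^{s}$ when $w\in L^{2}(Q)$; that is a genuine and worthwhile clarification of the argument.
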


\begin{proof}
    First of all, since $S$ is linear and bounded, it follows that  $S^*$ is well-defined.        
    Now for every $w\in U_D$ and $z \in Z_D$, we have that 
    \[
        (w,Sz)_{L^2((0,T);L^2(\Om))} = (S^*w,z)_{L^2((0,T);L^2(\Omc))} . 
    \]
    Next, testing the equation \eqref{adj0} with $Sz$ which solves the state
    equation in the very-weak sense (cf.~Definition~\ref{eq:vw_d}) we obtain 
    that 
    \begin{align*}
        (w,Sz)_{L^2((0,T);L^2(\Om))} 
        &= (-\partial_t p + (-\Delta)^s p,Sz)_{L^2((0,T);L^2(\Om))} \\
        &= -(z,\mathcal{N}_sp)_{L^2((0,T);L^2(\Omc))}
        = (z,S^*w)_{L^2((0,T);L^2(\Omc))} ,
    \end{align*}
and the proof is complete.     
\end{proof}
    
For the remainder of this section, we will assume that $\xi > 0$. 

\begin{theorem}
    Let $\mathcal{Z} \subset Z_D$ be open such that $Z_{ad,D} \subset 
    \mathcal{Z}$ and let the assumptions of Theorem~\ref{thm:docexist} 
    hold. Moreover, let $u\mapsto J(u) : U_D \rightarrow \RR$ be 
    continuously Fr\'echet differentiable with $J'(u) \in U_D$. 
    If $\bar{z}$ is a minimizer of \eqref{eq:rpDir} over $Z_{ad,D}$, then the
    first order necessary optimality conditions are given by 
        \begin{equation}\label{eq:dirfOC}
            (-\mathcal{N}_s\bar{p}+\xi\bar{z},z-\bar{z})_{L^2((0,T);L^2(\Omc))}
                \ge 0 , \quad \forall z \in Z_{ad,D}
        \end{equation}
        where $\bar{p} \in \mathbb{U}_0$ solves the adjoint equation 
        \begin{equation}\label{adj}
      \begin{cases}
        -\partial_t \bar{p} + (-\Delta)^s \bar{p} &= J'(\bar{u}) \quad \mbox{in } Q, \\
                    \bar{p} &= 0           \quad \mbox{in } \Sigma ,  \\
                    \bar{p}(T,\cdot) & = 0 \quad \mbox{in } \Om . 
     \end{cases}                
    \end{equation} 
    Finally, \eqref{eq:dirfOC} is equivalent to 
        \begin{equation}\label{eq:dproj}
            \bar{z} = \mathcal{P}_{Z_{ad}}\left(\xi^{-1} \mathcal{N}_s\bar{p} \right) ,
        \end{equation}
    where $\mathcal{P}_{Z_{ad}}$ is the projection onto the set $Z_{ad,D}$. 
    Moreover, if $J$ is convex then \eqref{eq:dirfOC} is a sufficient condition. 
\end{theorem}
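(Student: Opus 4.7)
The plan is to follow the standard reduced-problem optimality argument: differentiate the reduced cost $\mathcal{J}(z) = J(Sz) + \frac{\xi}{2}\|z\|^2_{Z_D}$ using the chain rule, identify the gradient via the adjoint operator $S^*$ computed in Lemma~\ref{lem:Sstar}, and then invoke convex analysis to extract the variational inequality and recast it as a projection.

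First I would verify that $\mathcal{J} : \mathcal{Z} \to \mathbb{R}$ is continuously Fr\'echet differentiable on the open set $\mathcal{Z}$. Since $S : Z_D \to U_D$ is linear and continuous (Theorem~\ref{thm:vwdexist}), its Fr\'echet derivative at any point is $S$ itself. Combining this with the hypothesis that $J'(u) \in U_D$ and the differentiability of $z \mapsto \tfrac{\xi}{2}\|z\|_{Z_D}^2$, the chain rule yields
\begin{equation*}
\mathcal{J}'(\bar{z})\,h \;=\; \bigl(J'(S\bar z),\,Sh\bigr)_{U_D} + \xi\,(\bar z, h)_{Z_D} \;=\; \bigl(S^{*}J'(\bar u),\,h\bigr)_{Z_D} + \xi\,(\bar z, h)_{Z_D},
\end{equation*}
where $\bar u = S\bar z$. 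Since $Z_{ad,D}$ is closed and convex and $\bar z$ is a local (hence here global, up to the convexity discussion) minimizer, the first-order condition reads $\mathcal{J}'(\bar z)(z - \bar z) \ge 0$ for every $z \in Z_{ad,D}$.

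Next I would insert the identification $S^* w = -\mathcal{N}_s p$ from Lemma~\ref{lem:Sstar} applied to $w = J'(\bar u) \in U_D$. This requires that the adjoint equation \eqref{adj} admits a unique weak solution $\bar p \in \mathbb{U}_0$, which follows directly from Proposition~\ref{prop:weak_Dir} after the time-reversal change of variables $t \mapsto T - t$ (the terminal condition becomes an initial one, and the datum $J'(\bar u) \in L^2((0,T);L^2(\Omega)) \hookrightarrow L^2((0,T);W^{-s,2}(\overline{\Omega}))$). Substituting gives exactly \eqref{eq:dirfOC}.

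For the projection identity \eqref{eq:dproj}, I would use the standard characterization of the metric projection $\mathcal{P}_{Z_{ad,D}}$ on a closed convex subset of a Hilbert space: $\bar z = \mathcal{P}_{Z_{ad,D}}(y)$ if and only if $\bar z \in Z_{ad,D}$ and $(y - \bar z, z - \bar z)_{Z_D} \le 0$ for all $z \in Z_{ad,D}$. Setting $y := \xi^{-1}\mathcal{N}_s \bar p$ and using $\xi > 0$, \eqref{eq:dirfOC} rewrites as $(\xi\bar z - \mathcal{N}_s\bar p, z - \bar z)_{Z_D} \ge 0$, i.e. $(y - \bar z, z - \bar z)_{Z_D} \le 0$, which is exactly the projection characterization. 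Finally, the sufficiency claim under convexity of $J$ follows because then $z \mapsto J(Sz)$ is convex (composition of convex $J$ with linear $S$) and $\tfrac{\xi}{2}\|\cdot\|_{Z_D}^2$ is convex, so $\mathcal{J}$ is convex on the convex set $Z_{ad,D}$; the variational inequality $\mathcal{J}'(\bar z)(z-\bar z) \ge 0$ then implies $\mathcal{J}(z) \ge \mathcal{J}(\bar z)$ for every $z \in Z_{ad,D}$.

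The main technical point to be careful about is not the algebra but the justification that $S^*$ evaluated on $J'(\bar u)$ really produces $-\mathcal{N}_s\bar p$ in $Z_D = L^2((0,T);L^2(\Omega^c))$; this is where one leans on Lemma~\ref{lem:Nmap} together with the regularity $\bar p \in L^2((0,T);W^{s,2}_0(\overline{\Omega}))$ coming from $\bar p \in \mathbb{U}_0$. Everything else is routine convex-analysis/chain-rule bookkeeping, so no further obstacle is expected.
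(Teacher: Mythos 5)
Your proposal follows essentially the same route as the paper: differentiate the reduced functional $\mathcal{J}$ via the chain rule, identify $S^*J'(\bar u)$ with $-\mathcal{N}_s\bar p$ through Lemma~\ref{lem:Sstar}, and recast the variational inequality as the projection identity using the standard Hilbert-space characterization (the paper instead cites \cite[Theorem~3.3.5]{HAttouch_GButtazzo_GMichaille_2014a} for this last step, which is the same fact). The additional remarks you make — well-posedness of the adjoint via time reversal and $L^2(\Om)\hookrightarrow W^{-s,2}(\overline\Om)$, and the explicit sufficiency argument under convexity — are correct and merely fill in details the paper leaves implicit.
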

\begin{proof}
    The statements are a direct consequence of the differentiability properties of 
    $J$ and the chain rule, combined with Lemma~\ref{lem:Sstar}. Indeed, let $h \in Z_{ad,D}$ be 
    given, then the directional derivative of $\mathcal{J}$ is given by 
    \begin{align*}
        \mathcal{J}'(\bar{z})h &= (J'(S\bar{z}),Sh)_{L^2((0,T);L^2(\Om))} 
                                + \xi (\bar{z},h)_{L^2((0,T);L^2(\Omc))} \\
                               &= (S^*J'(S\bar{z})+\xi\bar{z},h)_{L^2((0,T);L^2(\Om))} 
    \end{align*}
    where we have used that $J'(S\bar{z}) \in \mathcal{L}(L^2((0,T);L^2(\Om)),\RR) =
    L^2((0,T);L^2(\Om))$. Using Lemma~\ref{lem:Sstar}, the proof of the first
    part is finished. Finally, using Lemma~\ref{lem:Nmap} we have that 
    $\mathcal{N}_s\bar{p} \in L^2((0,T);L^2(\Omc))$. Then \eqref{eq:dproj} follows 
    by using \cite[Theorem~3.3.5]{HAttouch_GButtazzo_GMichaille_2014a}. The proof is finished.
\end{proof}

\subsection{Fractional Robin Optimal Control Problem}
\label{s:rob}

Next we shall focus on the Robin optimal control problem \eqref{eq:ncp}. 
We let 
    \[
        Z_R := L^2((0,T);L^2(\Omc,\mu)), \quad 
        U_R := L^2((0,T);W_{\Om,\kappa}^{s,2})\cap H^1((0,T);( W_{\Om, \kappa}^{s,2})^\star) . 
    \]        
Recall that $d\mu = \kappa dx$ with
$\kappa \in L^1(\Omc) \cap L^\infty(\Omc)$. 
Due to Theorem~\ref{Theo-sol-Ro}, the following control-to-state (solution)
map 
    \[
        S : Z_R \rightarrow U_R, \quad z \mapsto Sz =: u ,
    \]
is well-defined. In addition, $S$ is linear and continuous. Owing to the 
continuous embedding $U_R \hookrightarrow L^2((0,T);L^2(\Om))$ we can instead define 
    \[
        S : Z_R \rightarrow L^2(0,T;L^2(\Om)) .
    \]
The so-called reduced Robin exterior parabolic optimal control problem is then given by     
    \begin{equation}\label{eq:rpRob}
        \min_{z \in Z_{ad,R}} \mathcal{J}(z) := J(Sz) + \frac{\xi}{2}\|z\|^2_{Z_R} . 
    \end{equation}
The following well-posedness result holds.
\begin{theorem}\label{thm:rexist}
    Let $Z_{ad,R}$ be a convex and closed subset of $Z_R$ and let either $\xi > 0$
    or $Z_{ad,R} \subset Z_R$ be bounded. In addition, if 
    $J : L^2((0,T);L^2(\Om)) \rightarrow \RR$
    is weakly lower-semicontinuous then there exists a solution $\bar{z}$ to \eqref{eq:rpRob}
    and equivalently \eqref{eq:ncp}. If either $J$ is convex and $\xi > 0$ or $J$ is 
    strictly convex and $\xi \ge 0$ then $\bar{z}$ is unique. 
\end{theorem}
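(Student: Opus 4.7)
The plan is to mimic the proof of Theorem~\ref{thm:docexist} via the direct method of the calculus of variations. I first note that since $S:Z_R\to U_R$ is linear and continuous (Theorem~\ref{Theo-sol-Ro}) and $U_R\hookrightarrow L^2((0,T);L^2(\Om))$ continuously, the reduced cost $\mathcal{J}(z)=J(Sz)+\frac{\xi}{2}\|z\|_{Z_R}^2$ is well-defined on $Z_{ad,R}$. Since $Z_{ad,R}\neq\emptyset$ (it is a closed convex subset of the Hilbert space $Z_R$) and $J$ is weakly lower semicontinuous (hence bounded below on weakly compact sets), there exists a minimizing sequence $\{z_n\}_{n\in\NN}\subset Z_{ad,R}$ with $\mathcal{J}(z_n)\to\inf_{z\in Z_{ad,R}}\mathcal{J}(z)$.

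Next I would establish boundedness of $\{z_n\}$ in $Z_R$. If $\xi>0$, this follows from the coercivity of the penalty term $\frac{\xi}{2}\|z\|_{Z_R}^2$ in $\mathcal{J}$ together with the fact that $\mathcal{J}(z_n)$ is bounded above; if instead $Z_{ad,R}$ is bounded in $Z_R$, boundedness is immediate. Since $Z_R=L^2((0,T);L^2(\Omc,\mu))$ is a Hilbert space, I extract (up to relabeling) a weakly convergent subsequence $z_n\rightharpoonup \bar z$ in $Z_R$. Because $Z_{ad,R}$ is convex and closed, it is weakly closed, so $\bar z\in Z_{ad,R}$.

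The next step, and the one that carries the real content, is to verify that $S\bar z$ is indeed the weak solution corresponding to $\bar z$. By continuity of $S$, $u_n:=Sz_n\rightharpoonup S\bar z=:\bar u$ weakly in $U_R$, and hence weakly in $L^2((0,T);L^2(\Om))$. Now I pass to the limit in the weak formulation \eqref{we-so}: all three terms on the left-hand side, namely $\langle\partial_t u_n,v\rangle$, $\frac{C_{N,s}}{2}\int\!\!\int_{\RR^{2N}\setminus(\Omc)^2}\frac{(u_n(x)-u_n(y))(v(x)-v(y))}{|x-y|^{N+2s}}\,dxdy$ and $\int_{\Omc}\kappa u_n v\,dx$, are continuous linear functionals of $u_n\in U_R$ for each fixed $v\in W_{\Om,\kappa}^{s,2}$, so weak convergence of $u_n$ suffices. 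The right-hand side $\int_{\Omc}\kappa z_n v\,dx=(z_n,v)_{L^2(\Omc,\mu)}$ likewise passes to the limit by weak convergence of $z_n$ in $Z_R$. This yields that $(\bar u,\bar z)$ satisfies the weak formulation for a.e.\ $t\in(0,T)$, i.e.\ $\bar u=S\bar z$.

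Finally, to see that $\bar z$ is a minimizer, I invoke weak lower semicontinuity of $\mathcal{J}$: by hypothesis $u\mapsto J(u)$ is weakly l.s.c.\ on $L^2((0,T);L^2(\Om))$, so $J(S\bar z)\le\liminf_n J(Sz_n)$, and the Hilbert norm $\|\cdot\|_{Z_R}^2$ is convex and continuous, hence weakly l.s.c., so $\|\bar z\|_{Z_R}^2\le\liminf_n\|z_n\|_{Z_R}^2$. Adding these gives $\mathcal{J}(\bar z)\le\liminf_n\mathcal{J}(z_n)=\inf_{z\in Z_{ad,R}}\mathcal{J}(z)$, which forces equality. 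For uniqueness, the hypotheses imply strict convexity of $\mathcal{J}$ (the quadratic penalty $\frac{\xi}{2}\|\cdot\|_{Z_R}^2$ is strictly convex when $\xi>0$, and $z\mapsto J(Sz)$ is convex when $J$ is convex due to linearity of $S$; when $J$ is strictly convex and $\xi\ge 0$, strict convexity of $z\mapsto J(Sz)$ follows from injectivity of $S$, which in turn follows from uniqueness of the weak solution in Theorem~\ref{Theo-sol-Ro}). Strict convexity of $\mathcal{J}$ on the convex set $Z_{ad,R}$ gives uniqueness of $\bar z$. The main subtlety, which is the only place where the Robin setting differs nontrivially from the Dirichlet one, is verifying the injectivity/strict-convexity claim for uniqueness: this reduces to observing that if $Sz_1=Sz_2$ then the boundary-like term $\int_{\Omc}\kappa(z_1-z_2)v\,dx$ vanishes for all $v\in W_{\Om,\kappa}^{s,2}$, which combined with the fact that $\kappa>0$ on the support relevant for $Z_R$ (where the $\mu$-measure lives) yields $z_1=z_2$ in $Z_R$.
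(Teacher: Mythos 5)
Your argument is correct and matches the paper's proof in structure: direct method (Weierstrass theorem), extraction of a weakly convergent minimizing subsequence from the bounded minimizing sequence, passage to the limit in the Robin weak formulation \eqref{we-so} term by term (each term being a continuous linear functional of $u_n$ or $z_n$), and weak lower semicontinuity to conclude that $\bar z$ is a minimizer. One small wording slip in your uniqueness discussion: the injectivity of $S$ needed for the case $\xi=0$ with $J$ strictly convex does \emph{not} follow from ``uniqueness of the weak solution'' in Theorem~\ref{Theo-sol-Ro}, which only gives well-definedness of $S$; your concluding observation, that $Sz_1 = Sz_2$ forces $\int_{\Omc}\kappa(z_1-z_2)v\,dx=0$ for all $v\in W^{s,2}_{\Om,\kappa}$, hence $z_1=z_2$ in $L^2(\Omc,\mu)$, is the correct justification (and is a detail the paper, which simply refers to Theorem~\ref{thm:docexist}, leaves implicit).
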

\begin{proof}
    The proof is similar to the proof of Theorem~\ref{thm:docexist}. We only discuss the part 
    where $\{z_n\}_{n\in\mathbb{N}}$ is a minimizing sequence such that 
    $z_n \rightharpoonup \bar{z}$ in $L^2((0,T);L^2(\Omc,\mu))$ as $n\rightarrow\infty$.
    Let $(Sz_n,z_n)$,  $n\in\mathbb{N}$, be the solution of \eqref{eq:Sn}. We need
    to show that this sequence converges to $(S\bar{z},\bar{z})$ in 
    $L^2((0,T);W_{\Om,\kappa}^{s,2})\cap H^1((0,T);( W_{\Om, \kappa}^{s,2})^\star)$ 
    as $n\rightarrow\infty$ and $(S\bar{z},\bar{z})$ solves \eqref{eq:Sn} in the 
    weak sense (cf.~Definition~\ref{def:weak_n}).
    Since $u_n := Sz_n \in L^2((0,T);W_{\Om,\kappa}^{s,2})\cap H^1((0,T);( W_{\Om, \kappa}^{s,2})^\star)$ 
    solves \eqref{eq:Sn}, we have that the identity
    \begin{equation}\label{eq:b}
        \langle \partial_t u_n, v\rangle + \mathcal{E}(u_n,v) = \int_{\Omc} z_n v\;d\mu ,
    \end{equation}
    holds for every $v \in W_{\Om,\kappa}^{s,2}$ and a.e. $t\in (0,T)$, 
    where $\mathcal{E}$ is as defined in 
    \eqref{form-F}. We note that the mapping $S$ is bounded due to Theorem~\ref{Theo-sol-Ro}.
    As a result, after a subsequence, if necessary, we have that $Sz_n = u_n 
    \rightharpoonup S\bar{z} = \bar{u}$ in 
    $L^2((0,T);W_{\Om,\kappa}^{s,2})\cap H^1((0,T);( W_{\Om, \kappa}^{s,2})^\star)$
    as $n\rightarrow\infty$. Then taking the limit as $n\rightarrow\infty$ in 
    \eqref{eq:b} we obtain that 
    \[
        \langle \partial_t \bar{u}, v\rangle + \mathcal{E}(\bar{u},v) = \int_{\Omc} \bar{z} v\;d\mu ,
    \]
    i.e., $(S\bar{z},\bar{z})$ solves \eqref{eq:Sn} in the weak sense 
    (cf.~Definition~\ref{def:weak_n}). The proof is finished.
\end{proof}

As in the previous section, before we state the first order optimality conditions, we
shall derive the expression of the adjoint operator $S^*$. 

\begin{lemma}
    The adjoint operator $S^* : L^2((0,T);L^2(\Om)) \rightarrow Z_R$ is given by 
    \[
        (S^*w,z)_{Z_R} = \int_\Sigma p z \; d\mu dt \quad \forall z \in Z_R , 
    \]
    where $w \in L^2((0,T);L^2(\Om))$ and 
    $p \in L^2((0,T);W_{\Om,\kappa}^{s,2})\cap H^1((0,T);( W_{\Om, \kappa}^{s,2})^\star)$
    is the weak solution to 
    \begin{equation}\label{adj0n}
      \begin{cases}
        -\partial_t p + (-\Delta)^s p &= w \quad \mbox{in } Q, \\
           \mathcal{N}_s p + \kappa p &= 0           \quad \mbox{in } \Sigma ,  \\
                    p(T,\cdot) & = 0 \quad \mbox{in } \Om . 
     \end{cases}                
    \end{equation}
\end{lemma}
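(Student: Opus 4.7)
The plan is to follow the same template as in the proof of Lemma~\ref{lem:Sstar} for the Dirichlet case, exploiting instead the integration-by-parts identity naturally associated with the Robin bilinear form $\mathcal E$. Linearity and boundedness of $S : Z_R \to L^2((0,T);L^2(\Om))$ (which follow from Theorem~\ref{Theo-sol-Ro} and the continuous embedding $U_R \hookrightarrow L^2((0,T);L^2(\Om))$) ensure that $S^\star$ is a well-defined bounded linear operator into $Z_R$, so only the explicit formula needs to be verified. For existence and uniqueness of the adjoint state $p$, I would make the substitution $\tilde p(t,\cdot) := p(T-t,\cdot)$ which transforms \eqref{adj0n} into a forward-in-time Robin problem of the form \eqref{eq:Sn} with datum $w(T-\cdot)$ and homogeneous exterior data (take $z = 0$ and the right-hand side $w$ instead of $0$); a routine modification of Theorem~\ref{Theo-sol-Ro} (or the semigroup identity in the proof of Proposition~\ref{pro-sol-Ro} applied directly to the generator $-(-\Delta)_R^s$) then yields a unique $p \in L^2((0,T);W_{\Om,\kappa}^{s,2})\cap H^1((0,T);(W_{\Om,\kappa}^{s,2})^\star)$ satisfying
\[
    -\langle \partial_t p, v\rangle + \mathcal E(p,v) = \int_\Om w v\,dx
\]
for every $v \in W_{\Om,\kappa}^{s,2}$ and a.e.\ $t\in(0,T)$, together with $p(T,\cdot)=0$.

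Next, given $z\in Z_R$ and writing $u := Sz$, I would pair $w$ with $u$ and compute
\[
    \int_0^T\int_\Om w u\,dx\,dt = \int_0^T\Bigl[-\langle \partial_t p, u\rangle + \mathcal E(p,u)\Bigr]\,dt,
\]
which is legitimate because $u(t,\cdot)\in W_{\Om,\kappa}^{s,2}$ for a.e.\ $t$ and so can serve as test function in the adjoint equation. The key manipulation is integration by parts in time: since $u, p$ both lie in $L^2((0,T);W_{\Om,\kappa}^{s,2})\cap H^1((0,T);(W_{\Om,\kappa}^{s,2})^\star)$, the product $t\mapsto \langle p(t),u(t)\rangle$ is absolutely continuous and one has
\[
    -\int_0^T\langle \partial_t p, u\rangle\,dt = \int_0^T\langle \partial_t u, p\rangle\,dt - \bigl[\langle p,u\rangle\bigr]_0^T = \int_0^T\langle \partial_t u, p\rangle\,dt,
\]
using $p(T,\cdot)=0$ and $u(0,\cdot)=0$.

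I would then use the symmetry of $\mathcal E$ to rewrite $\mathcal E(p,u)=\mathcal E(u,p)$ and test the weak formulation \eqref{we-so} of the state equation with $v=p(t,\cdot)$, which is admissible by the same regularity argument. This gives $\langle \partial_t u,p\rangle + \mathcal E(u,p) = \int_{\Omc} \kappa z p\,dx$ for a.e.\ $t$, and integrating in time yields
\[
    (w,Sz)_{L^2((0,T);L^2(\Om))} = \int_0^T\int_{\Omc} \kappa z p\,dx\,dt = \int_\Sigma p z\,d\mu\,dt.
\]
By definition of the adjoint this identifies $(S^\star w,z)_{Z_R}$ with the right-hand side for every $z\in Z_R$, finishing the proof. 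The main technical point requiring care is the integration by parts in time and the admissibility of $u(t,\cdot)$ and $p(t,\cdot)$ as test functions for each other's equation; the former rests on a standard density argument (e.g., \cite{ABHN}) given the regularity of $u$ and $p$, while the latter is immediate from the choice of function spaces in Definition~\ref{def:weak_n}.
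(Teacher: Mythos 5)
Your argument is correct and takes essentially the same route as the paper's proof: test the adjoint equation with $u = Sz$, integrate by parts in time using $p(T,\cdot)=0$ and $u(0,\cdot)=0$, invoke symmetry of $\mathcal{E}$, and then recognize the state weak formulation tested with $v=p$ to produce $\int_\Sigma zp\,d\mu\,dt$. You merely spell out the space/time integration-by-parts steps that the paper compresses into one sentence, and you add the (useful but implicit in the paper) time-reversal reduction to Theorem~\ref{Theo-sol-Ro} for existence of $p$.
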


\begin{proof}
    Let $w \in L^2((0,T);L^2(\Om)$ and $z \in Z_R$. Since 
    $Sz \in L^2((0,T);W_{\Om,\kappa}^{s,2})\cap H^1((0,T);( W_{\Om, \kappa}^{s,2})^\star)
    \hookrightarrow L^2((0,T);L^2(\Om))$, with the embedding being continuous, we can write 
    \[
        (w,Sz)_{L^2((0,T);L^2(\Om))} = (S^*w,z)_{Z_R} . 
    \]
    Furthermore, testing \eqref{adj0n} with $Sz = u$ we obtain that 
    \begin{align*}
        (w,Sz)_{L^2((0,T);L^2(\Om))} &= (-\partial_t p + (-\Delta)^s p , Sz)_{L^2((0,T);L^2(\Om))} \\
                                   &= \int_\Sigma z p\;d\mu dt= (S^*w,z)_{Z_R} 
    \end{align*}
    where have used the integration-by-parts in both space and time and the fact that
    $Sz = u$ solves the state equation according to Definition~\ref{def:weak_n}.
    The proof is complete. 
\end{proof}

We conclude this section with the following first order optimality conditions result
whose proof is similar to the Dirichlet case and is omitted for brevity. We shall 
assume that $\xi > 0$. 

\begin{theorem}
    Let $\mathcal{Z} \subset Z_R$ be open such that $Z_{ad,R} \subset 
    \mathcal{Z}$ and let the assumptions of Theorem~\ref{thm:rexist} 
    holds. Let $u \mapsto J(u) : L^2((0,T);L^2(\Om)) \rightarrow \RR$
    be continuously Fr\'echet differentiable with $J'(u) \in L^2((0,T);L^2(\Om))$.
    If $\bar{z}$ is a minimizer of \eqref{eq:rpRob}, then the first order necessary
    optimality conditions are given by 
    \begin{equation}\label{eq:fopcRob}
        \int_\Sigma (\bar{p}+\xi\bar{z})(z-\bar{z}) \;d\mu dt \ge 0, \quad z \in Z_{ad,R} 
    \end{equation}    
    where $\bar{p} \in L^2((0,T);W_{\Om,\kappa}^{s,2})\cap H^1((0,T);( W_{\Om, \kappa}^{s,2})^\star)$ 
    solves the adjoint equation 
    \begin{equation}\label{adj1n}
      \begin{cases}
        -\partial_t \bar{p} + (-\Delta)^s \bar{p} = J'(\bar{u}) \quad &\mbox{in } Q, \\
           \mathcal{N}_s \bar{p} + \kappa \bar{p} = 0           \quad &\mbox{in } \Sigma ,  \\
                    \bar{p}(T,\cdot)  = 0 \quad& \mbox{in } \Om . 
     \end{cases}                
    \end{equation}
    Moreover, \eqref{eq:fopcRob} is equivalent to 
    \[
        \bar{z} = \mathcal{P}_{Z_{ad,R}}(-\xi^{-1}\bar{p})
    \]
    where $\mathcal{P}_{Z_{ad},R}$ is the projection onto the set $Z_{ad,R}$. If
    $J$ is convex then \eqref{eq:fopcRob} is sufficient. 
\end{theorem}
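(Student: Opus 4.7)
The plan is to mirror the argument given in the Dirichlet case, replacing the adjoint operator computation by the Robin version just established. First, I would record that under the hypotheses the reduced functional
\[
\mathcal{J}(z) = J(Sz) + \frac{\xi}{2}\|z\|^2_{Z_R}
\]
is Gâteaux (in fact Fréchet) differentiable on the open set $\mathcal{Z}\supset Z_{ad,R}$: indeed $S : Z_R \to L^2((0,T);L^2(\Om))$ is linear and continuous by Theorem~\ref{Theo-sol-Ro} together with the continuous embedding $U_R \hookrightarrow L^2((0,T);L^2(\Om))$, so the chain rule applies to the composition $z\mapsto J(Sz)$, while $z\mapsto \tfrac{\xi}{2}\|z\|^2_{Z_R}$ is trivially differentiable with derivative $\xi z$ in the $Z_R$-duality.

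Next, for any admissible direction $h\in Z_R$ I would compute
\[
\mathcal{J}'(\bar{z})h = (J'(S\bar{z}),Sh)_{L^2((0,T);L^2(\Om))} + \xi\,(\bar{z},h)_{Z_R},
\]
and then invoke the adjoint identity from the preceding lemma with $w:=J'(\bar{u})\in L^2((0,T);L^2(\Om))$ and $p=\bar{p}$ solving the adjoint equation \eqref{adj1n}. This rewrites the first term as
\[
(J'(\bar{u}),Sh)_{L^2((0,T);L^2(\Om))} = (S^*J'(\bar{u}),h)_{Z_R} = \int_\Sigma \bar{p}\,h\;d\mu dt,
\]
so that $\mathcal{J}'(\bar{z})h = \int_\Sigma(\bar{p}+\xi\bar{z})h\;d\mu dt$. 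Standard convex analysis (see, e.g., \cite[Theorem~3.2.1]{HAttouch_GButtazzo_GMichaille_2014a}) applied to the minimization over the closed convex set $Z_{ad,R}$ yields the variational inequality \eqref{eq:fopcRob} by testing with $h=z-\bar{z}$ for arbitrary $z\in Z_{ad,R}$.

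Once \eqref{eq:fopcRob} is established, the equivalent projection formulation $\bar{z} = \mathcal{P}_{Z_{ad,R}}(-\xi^{-1}\bar{p})$ follows from the characterization of the metric projection on a closed convex subset of the Hilbert space $Z_R = L^2((0,T);L^2(\Omc,\mu))$, again via \cite[Theorem~3.3.5]{HAttouch_GButtazzo_GMichaille_2014a}. The fact that $\bar{p}$, viewed through its trace/restriction on $\Sigma$, defines an element of $Z_R$ is built into the functional setting of the adjoint problem since $\bar{p}\in L^2((0,T);W^{s,2}_{\Omega,\kappa})$ and the $L^2(\Omc,\mu)$-norm is controlled by the $W^{s,2}_{\Omega,\kappa}$-norm through the definition \eqref{norm-e}. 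Finally, sufficiency under convexity of $J$ is immediate: $\mathcal{J}$ then becomes convex, so every stationary point in the variational-inequality sense is a global minimizer. The only mildly delicate point, and the one I would check carefully, is that $-\xi^{-1}\bar{p}$ really lies in $Z_R$ (so that the projection makes sense), which is exactly the integrability just noted; everything else is a routine adaptation of the Dirichlet argument.
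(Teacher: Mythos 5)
Your proposal is correct and mirrors exactly what the paper does: the paper explicitly omits this proof, stating it is ``similar to the Dirichlet case,'' and your argument reproduces that Dirichlet-case strategy (chain rule, adjoint lemma to rewrite $(J'(\bar u), Sh)$ as $\int_\Sigma \bar p\, h\, d\mu\, dt$, then the variational-inequality and projection characterizations from \cite[Theorem~3.3.5]{HAttouch_GButtazzo_GMichaille_2014a}). Your added observation that $\bar p|_{\Omc}\in L^2((0,T);L^2(\Omc,\mu))$ follows from $\bar p\in L^2((0,T);W^{s,2}_{\Omega,\kappa})$ via \eqref{norm-e} is the correct Robin analogue of the paper's invocation of Lemma~\ref{lem:Nmap} in the Dirichlet proof.
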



\section{Approximation of Dirichlet Exterior Value and Control Problems}\label{s:extapp}

Recall that the Dirichlet control problem requires approximations of the nonlocal
normal derivative of the test function (cf.~\eqref{eq:vw_d}) and the nonlocal normal
derivative of the adjoint variable (cf.~\eqref{eq:dirfOC}). Nonlocal normal derivative
is a delicate object to handle both at the continuous level and at the discrete
level. Indeed, the best known regularity result for the nonlocal normal derivative 
is as given in Lemma~\ref{lem:Nmap}. Moreover, numerical approximation of this 
object is a daunting task. In order to circumvent the approximations of the nonlocal 
normal derivative both in \eqref{eq:vw_d} and \eqref{eq:dirfOC}, in this section
we propose to approximate the parabolic Dirichlet problem by the following regularized 
parabolic Dirichlet problem (or the parabolic Robin problem). Subsequently, we 
shall approximate the parabolic Dirichlet control problem by the regularized 
parabolic Dirichlet control problem. 

Let $n \in \mathbb{N}$. In this section we are interested in solutions 
$u_n$ to the regularized parabolic Dirichlet problem 

 \begin{equation}\label{eq:Sn-G-Reg}
 \begin{cases}
    \partial_t u_n + (-\Delta)^s u_n = 0 \quad &\mbox{in } Q, \\
    \mathcal{N}_s u_n  +n\kappa u_n = n\kappa z \quad &\mbox{in } \Sigma, \\
    u_n(0,\cdot) = 0 \quad &\mbox{in } \Om ,
 \end{cases}                
 \end{equation} 
that belongs to the space 
$L^2((0,T);W^{s,2}_{\Om,\kappa}\cap L^2(\Omc)) \cap H^1((0,T);
(W^{s,2}_{\Om,\kappa}\cap L^2(\Omc))^\star)$. Notice that the space 
$W^{s,2}_{\Om,\kappa}\cap L^2(\Omc)$ is endowed with the norm
\begin{align}
\|u\|_{W_{\Om,\kappa}^{s,2}\cap L^2(\Omc)}:=\left(\|u\|_{W_{\Om,\kappa}^{s,2}}^2+\|u\|_{L^2(\Omc)}^2\right)^{\frac 12},\;\;u\in W_{\Om,\kappa}^{s,2}\cap L^2(\Omc).
\end{align}
 Moreover, in our application we shall take $\kappa$ such that its support $\mbox{supp}[\kappa]$ has a positive Lebesgue measure. Thus we make the following assumption.

\begin{assumption}\label{asum}
We assume that $\kappa\in L^1(\Omc)\cap L^\infty(\Omc)$ and satisfies $\kappa>0$ almost everywhere in $K:=\mbox{supp}[\kappa]\subset\Omc$, where the Lebesgue measure $|K| > 0$. 
\end{assumption}

It follows from Assumption \ref{asum} that $\displaystyle\int_{\Omc}\kappa\;dx>0$. 

 We recall that a solution to \eqref{eq:Sn-G-Reg}
belongs to $L^2((0,T);W^{s,2}_{\Om,\kappa}) \cap H^1((0,T);
(W^{s,2}_{\Om,\kappa})^\star)$ by using Proposition~\ref{pro-sol-Ro}. In order to show that this solution
lies in $L^2((0,T);W^{s,2}_{\Om,\kappa}\cap L^2(\Omc)) \cap H^1((0,T);
(W^{s,2}_{\Om,\kappa}\cap L^2(\Omc))^\star)$ we recall a result from 
\cite[Lemma~6.2]{HAntil_RKhatri_MWarma_2018a}.
\begin{lemma}{\cite[Lemma~6.2]{HAntil_RKhatri_MWarma_2018a}}\label{le62}
Assume that Assumption \ref{asum} holds. Then
\begin{equation}\label{norm-equiv}
\|u\|_W:=\left(\int\int_{\RR^{2N}\setminus(\Omc)^2}\frac{|u(x)-u(y)|^2}{|x-y|^{N+2s}}\;dxdy+\int_{\Omc}|u|^2\;dx\right)^{\frac 12},
\end{equation}
defines an equivalent norm on $W_{\Om,\kappa}^{s,2}\cap L^2(\Omc)$.
\end{lemma}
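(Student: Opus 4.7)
The plan is to prove two-sided norm equivalence, of which one direction is immediate and the other requires a fractional Poincar\'e-type inequality powered by the exterior data on $K=\mathrm{supp}[\kappa]$.

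First, I would establish the easy direction $\|u\|_W\le \|u\|_{W_{\Om,\kappa}^{s,2}\cap L^2(\Omc)}$ simply by dropping the non-negative terms $\|u\|_{L^2(\Omega)}^2$ and $\|u\|_{L^2(\Omc,\mu)}^2$ from the right-hand side of the definition of $\|\cdot\|_{W_{\Om,\kappa}^{s,2}\cap L^2(\Omc)}$, since the Gagliardo term and $\|u\|_{L^2(\Omc)}^2$ appear identically in both norms.

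For the reverse direction, I would control each of the remaining pieces of $\|u\|_{W_{\Om,\kappa}^{s,2}\cap L^2(\Omc)}$ by $\|u\|_W$. The weighted exterior term is easy: since $\kappa\in L^\infty(\Omc)$,
\[
\|u\|_{L^2(\Omc,\mu)}^2=\int_{\Omc}\kappa|u|^2\,dx\le \|\kappa\|_{L^\infty(\Omc)}\|u\|_{L^2(\Omc)}^2\le \|\kappa\|_{L^\infty(\Omc)}\|u\|_W^2.
\]

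The core of the argument is a Poincar\'e-type estimate $\|u\|_{L^2(\Omega)}^2\le C\|u\|_W^2$. I would prove this directly. Using Assumption~\ref{asum}, pick a compact subset $K'\subset K$ of positive Lebesgue measure (possible since $|K|>0$), and set $D:=\mathrm{diam}(\overline{\Omega}\cup K')<\infty$ because $\Omega$ is bounded and $K'$ is compact. For any $x\in\Omega$ and $y\in K'$, write $|u(x)|^2\le 2|u(x)-u(y)|^2+2|u(y)|^2$ and integrate in $y\in K'$. Since $|x-y|\le D$, we have $1\le D^{N+2s}|x-y|^{-(N+2s)}$, which gives
\[
|K'|\,|u(x)|^2\le 2D^{N+2s}\int_{K'}\frac{|u(x)-u(y)|^2}{|x-y|^{N+2s}}\,dy+2\int_{K'}|u(y)|^2\,dy.
\]
Integrating over $x\in\Omega$ and using $K'\subset\Omc$ (so $\Omega\times K'\subset\RR^{2N}\setminus(\Omc)^2$) yields
\[
|K'|\,\|u\|_{L^2(\Omega)}^2\le 2D^{N+2s}\iint_{\RR^{2N}\setminus(\Omc)^2}\frac{|u(x)-u(y)|^2}{|x-y|^{N+2s}}\,dxdy+2|\Omega|\,\|u\|_{L^2(\Omc)}^2\le C\|u\|_W^2.
\]

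Combining the three estimates gives $\|u\|_{W_{\Om,\kappa}^{s,2}\cap L^2(\Omc)}\le C\|u\|_W$, completing the equivalence. The only real obstacle is the Poincar\'e inequality on $\Omega$; the simple direct truncation to a compact $K'\subset K$ sidesteps any compactness/contradiction machinery and avoids worrying about whether $K$ itself is bounded. An alternative would be a contradiction argument using fractional Rellich compactness, but the direct route above is shorter and yields explicit constants depending only on $|K'|$, $|\Omega|$, $D$, and $\|\kappa\|_{L^\infty(\Omc)}$.
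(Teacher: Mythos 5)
Your proof is correct and self-contained. Note that the paper does not actually prove this lemma; it is cited verbatim from the companion elliptic paper \cite[Lemma~6.2]{HAntil_RKhatri_MWarma_2018a}, so there is no in-text argument here to compare against. Your reverse inequality is a clean fractional Poincar\'e estimate: the pointwise bound $|u(x)|^2\le 2|u(x)-u(y)|^2+2|u(y)|^2$, averaged over a set $K'\subset K$ of positive finite measure and combined with the kernel lower bound $|x-y|^{-(N+2s)}\ge D^{-(N+2s)}$ on the bounded set $\Omega\times K'$, yields $|K'|\,\|u\|_{L^2(\Omega)}^2\le C\|u\|_W^2$ after integrating and using $\Omega\times K'\subset\RR^{2N}\setminus(\Omc)^2$; the weighted exterior term is then controlled trivially by $\|\kappa\|_{L^\infty(\Omc)}$. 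One small remark: since $K=\mathrm{supp}[\kappa]$ is closed but a priori unbounded, the cleanest choice is $K'=K\cap\overline{B_R}$ for $R$ large enough that $|K'|>0$ (possible since $|K\cap\overline{B_R}|\uparrow|K|>0$); this is automatically compact without invoking inner regularity. With that cosmetic adjustment, the argument is complete, elementary, and yields explicit constants depending only on $|K'|$, $|\Omega|$, $\operatorname{diam}(\overline\Omega\cup K')$, $s$, $N$, and $\|\kappa\|_{L^\infty(\Omc)}$.
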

We are now ready to state the main result of this section whose proof is
motivated by the previously considered elliptic case by the authors in
\cite{HAntil_RKhatri_MWarma_2018a}. 

 \begin{theorem}[\bf Approximation of weak solutions to Dirichlet problem]
 \label{thm:approx_dbcp}
  Let Assumption~\ref{asum} hold. Then the following assertions are true.
  \begin{enumerate}
\item  Let $z \in H^1((0,T);W^{s,2}(\Omc))$ and  $u_{n} \in L^2((0,T);W^{s,2}_{\Om,\kappa}\cap L^2(\Omc)) \cap H^1((0,T);
(W^{s,2}_{\Om,\kappa}\cap L^2(\Omc))^\star)$ be the weak solution of  \eqref{eq:Sn-G-Reg}. Let $u\in \mathbb{U}$ be the weak solution to the state equation 
  \eqref{eq:Sd}. Then there is a constant $C>0$ (independent of $n$) such that
  \begin{align}\label{es-diff}
  \|u-u_{n}\|_{L^2((0,T);L^2(\RR^N))}\le \frac{C}{n}\|u\|_{L^2((0,T);W^{s,2}(\RR^N))}.
  \end{align}
In particular $u_{n}$ converges strongly to $u$ in $L^2((0,T);L^2(\Omega))$ as $n\to\infty$.

\item Let $z \in L^2((0,T);L^2(\Omc))$ and $u_{n} \in L^2((0,T);W^{s,2}_{\Om,\kappa}\cap L^2(\Omc)) \cap H^1((0,T);
(W^{s,2}_{\Om,\kappa}\cap L^2(\Omc))^\star)$ be the weak solution of \eqref{eq:Sn-G-Reg}. Then there is a subsequence that we still denote by $\{u_n\}_{n\in\NN}$  and a $\tilde u\in L^2((0,T);L^2(\RR^N))$ such that $u_{n}\rightharpoonup \tilde u$ in $L^2((0,T);L^2(\RR^N))$ as $n\to\infty$, and $\tilde u$ satisfies

\begin{align}\label{eq63}
\int_{Q}\tilde u(-\partial_t v + (-\Delta)^sv)\;dxdt=-\int_{\Sigma}\tilde u\mathcal N_sv\;dxdt,
\end{align}
for all $v \in L^2((0,T);V) \cap H^1((0,T);L^2(\Om))$ with $v(T,\cdot) = 0$.
\end{enumerate}
 \end{theorem}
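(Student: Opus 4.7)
The plan for (a) is to study the error $\phi_n:=u-u_n$, derive a parabolic Robin-type problem it satisfies, carry out an energy estimate to control $\phi_n$ on $\Sigma$ with rate $1/n$, and then upgrade this to an $L^2(Q)$ bound via the very-weak framework. Subtracting the governing equations and using $u|_\Sigma=z$ together with the Robin relation $\mathcal{N}_s u_n = n\kappa(z-u_n) = n\kappa\phi_n$ on $\Sigma$, one checks that $\phi_n$ satisfies
\begin{equation*}
\partial_t\phi_n + (-\Delta)^s\phi_n = 0 \text{ in } Q,\quad \mathcal{N}_s\phi_n + n\kappa\phi_n = \mathcal{N}_s u \text{ on } \Sigma,\quad \phi_n(0,\cdot)=0 \text{ in } \Om.
\end{equation*}
Testing the associated weak formulation with $v=\phi_n$ and integrating in time produces an energy identity whose left-hand-side dissipation contains $n\int_0^T\!\!\int_\Omc \kappa\phi_n^2\,dxdt$, balanced against $\int_0^T\!\!\int_\Omc \phi_n\,\mathcal{N}_s u\,dxdt$ on the right. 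Invoking Lemma~\ref{lem:Nmap} to bound $\|\mathcal{N}_s u\|_{L^2(\Sigma)}\le C\|u\|_{L^2((0,T);W^{s,2}(\RR^N))}$, Young's inequality with weight proportional to $n$, and Lemma~\ref{le62} together with the positivity of $\kappa$ on its support to convert $\int_\Omc\kappa\phi_n^2$ into an $L^2(\Omc)$ bound, the estimate $\|\phi_n\|_{L^2(\Sigma)}\le (C/n)\|u\|_{L^2((0,T);W^{s,2}(\RR^N))}$ follows.

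To lift this to an $L^2(Q)$ estimate, I observe that $\phi_n$ is itself a very-weak Dirichlet solution in the sense of Definition~\ref{def:vweak_d} with exterior datum $z-u_n$. Indeed, both $u$ and $u_n$ satisfy their respective very-weak identities---for $u_n$ this is obtained by testing its Robin weak formulation with any admissible $v$ as in Definition~\ref{def:vweak_d} (noting that $v=0$ on $\Omc$ kills $\int_\Omc n\kappa u_n v$ and $\int_\Omc n\kappa z v$), applying Proposition~\ref{prop:prop} in space and integrating by parts in time---so the subtraction yields $\int_Q \phi_n(-\partial_t v+(-\Delta)^s v)\,dxdt = -\int_\Sigma (z-u_n)\mathcal{N}_s v\,dxdt$. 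The a priori estimate \eqref{VWS_EST} of Theorem~\ref{thm:vwdexist} then furnishes $\|\phi_n\|_{L^2(Q)}\le C\|z-u_n\|_{L^2(\Sigma)}$, which combined with the previous step establishes \eqref{es-diff}.

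For (b), the same integration-by-parts computation shows that every $u_n$ satisfies $\int_Q u_n(-\partial_t v + (-\Delta)^s v)\,dxdt = -\int_\Sigma u_n\,\mathcal{N}_s v\,dxdt$ for every admissible $v$. To obtain uniform bounds, the Robin energy identity provides $\|u_n\|_{L^2((0,T);L^2(\Omc,\mu))}\le \|z\|_{L^2((0,T);L^2(\Omc,\mu))}$ with $d\mu=\kappa\,dx$, and the duality argument from the proof of Theorem~\ref{thm:vwdexist}, applied with $u_n$ viewed as a very-weak Dirichlet solution whose exterior datum is its own restriction $u_n|_\Sigma$, then yields a uniform $L^2(Q)$ bound. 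I extract a weakly convergent subsequence $u_n\rightharpoonup\tilde u$ in $L^2((0,T);L^2(\RR^N))$ and pass to the limit in the displayed identity; since the test functions are independent of $n$ and both sides are linear in $u_n$, weak convergence is enough to conclude \eqref{eq63}.

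The main obstacle is obtaining the sharp rate $1/n$ in (a) rather than the $1/\sqrt{n}$ that the energy estimate produces on its own: the crucial move is to use the energy step only to bound $\phi_n$ on $\Sigma$ and then invoke the very-weak a priori estimate of Theorem~\ref{thm:vwdexist} as a second step, converting the $\Sigma$-bound into a $Q$-bound. The analogous difficulty in (b) is that the natural Robin energy inequality degrades in $n$, so uniform $L^2(Q)$ control on $u_n$ must again be sourced through the very-weak identity rather than from direct coercivity.
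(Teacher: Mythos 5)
Your overall strategy matches the paper's: an energy estimate for the difference on the exterior set $\Omc$ giving the rate $1/n$, followed by a duality step to transfer this bound to $L^2((0,T);L^2(\Om))$, and for part~(b) a uniform bound plus weak compactness and passage to the limit. Viewing $\phi_n:=u-u_n$ as the very-weak Dirichlet solution with exterior datum $z-u_n$ and invoking estimate \eqref{VWS_EST} of Theorem~\ref{thm:vwdexist} directly is a nice economy over the paper, which re-runs the duality argument inline (equations \eqref{edp}--\eqref{B4}); the underlying mechanism is identical since \eqref{VWS_EST} is itself proved by duality. Your identification of the Robin-type problem satisfied by $\phi_n$ and your derivation of the very-weak identity for $u_n$ by testing the Robin weak formulation against $v\in V$ vanishing in $\Omc$ and applying Proposition~\ref{prop:prop} are both correct and reproduce the paper's computation \eqref{IN-I}--\eqref{RP-WS-3}.

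There is one gap in part~(b). You state the energy bound in the $\mu$-weighted norm, $\|u_n\|_{L^2((0,T);L^2(\Omc,\mu))}\le\|z\|_{L^2((0,T);L^2(\Omc,\mu))}$. That bound is not sufficient for the next two steps: the duality argument (or \eqref{VWS_EST}) pairs $u_n|_\Sigma$ against $\mathcal N_s w$ with respect to Lebesgue measure on $\Omc$, and the extraction of a weakly convergent subsequence in $L^2((0,T);L^2(\RR^N))$ also requires a uniform bound in the \emph{Lebesgue} norm $\|u_n\|_{L^2((0,T);L^2(\Omc))}$. Since Assumption~\ref{asum} permits $\kappa$ to vanish on a subset of $\Omc$ of positive measure, the $\mu$-weighted norm does not control the Lebesgue norm. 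What is needed, and what the paper uses (cf.\ \eqref{Ine-eq-norm} and eq.~\eqref{weq-2es}), is the coercivity inequality
\[
\frac{C_{N,s}}{2}\int\int_{\RR^{2N}\setminus(\Omc)^2}\frac{|u_n(x)-u_n(y)|^2}{|x-y|^{N+2s}}\,dxdy+n\int_{\Omc}|u_n|^2\,dx\le C\,\mathcal E_n(u_n,u_n),
\]
which is exactly the ingredient (via Lemma~\ref{le62}) you already invoke in part~(a) to convert $n\int_\Omc\kappa\phi_n^2$ into an unweighted $L^2(\Omc)$ bound. Invoking the same inequality in part~(b) repairs the argument and yields $\|u_n\|_{L^2((0,T);L^2(\Omc))}\le C\|z\|_{L^2((0,T);L^2(\Omc))}$, after which your duality and compactness steps go through as written.

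A minor terminological point: the energy step already produces $\|u-u_n\|_{L^2((0,T);L^2(\Omc))}\le (C/n)\|u\|_{L^2((0,T);W^{s,2}(\RR^N))}$ (as you assert), and it is only the $L^\infty((0,T);L^2)$ control on $\Om$ that degrades to $1/\sqrt n$; the duality step is needed to recover the $1/n$ rate in $L^2((0,T);L^2(\Om))$, not in $L^2((0,T);L^2(\Omc))$. This does not affect the correctness of your argument.
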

\begin{proof}
(a) 
We begin by discussing well-posedness of \eqref{eq:Sn-G-Reg}. We first
notice that under our assumption we have that $W^{s,2}(\Omc) \hookrightarrow
L^2(\Omc) \hookrightarrow L^2(\Omc,\mu)$. Now a weak solution 
$u_n \in L^2((0,T);W^{s,2}_{\Om,\kappa}\cap L^2(\Omc)) \cap H^1((0,T);
(W^{s,2}_{\Om,\kappa}\cap L^2(\Omc))^\star)$
to \eqref{eq:Sn-G-Reg} fulfills the identity 
\begin{align}\label{RP-WS}
\langle\partial_t u_n, v \rangle + 
\frac{C_{N,s}}{2}\int\int_{\RR^{2N}\setminus(\Omc)^2}&\frac{(u_{n}(x)-u_{n}(y))(v(x)-v(y))}{|x-y|^{N+2s}}\;dxdy\notag\\
&+n\int_{\Omc}u_{n} v\;d\mu=n\int_{\Omc} zv\;d\mu,
\end{align}
for every $v\in  W_{\Om,\kappa}^{s,2}\cap L^2(\Omc)$ and almost every 
$t \in (0,T)$. For every 
$n \in \mathbb{N}$, existence of a unique solution 
$u_{n} \in L^2((0,T);W^{s,2}_{\Om,\kappa}\cap L^2(\Omc)) \cap H^1((0,T);
(W^{s,2}_{\Om,\kappa}\cap L^2(\Omc))^\star)$
to \eqref{eq:Sn-G-Reg} follows by using
the arguments of Proposition~\ref{pro-sol-Ro}.  

Next we prove the estimate \eqref{es-diff}. 
For $v,w\in L^2((0,T);W^{s,2}_{\Om,\kappa}\cap L^2(\Omc)) \cap H^1((0,T);
(W^{s,2}_{\Om,\kappa}\cap L^2(\Omc))^\star)$, we shall let
\begin{align}\label{eq:ddd}
\mathcal E_n(v,w):=\frac{C_{N,s}}{2}\int\int_{\RR^{2N}\setminus(\Omc)^2}\frac{(v(x)-v(y))(w(x)-w(y))}{|x-y|^{N+2s}}\;dxdy+n\int_{\Omc}vw\;d\mu.
\end{align} 
It is not difficult to see (cf. \cite[Eq.~(6.17)]{HAntil_RKhatri_MWarma_2018a}) that 
there is a constant $C > 0$ such that 
\begin{align}\label{Ine-eq-norm}
\frac{C_{N,s}}{2}\int\int_{\RR^{2N}\setminus(\Omc)^2}\frac{|u_{n}(x)-u_{n}(y)|^2}{|x-y|^{N+2s}}\;dxdy+n\int_{\Omc}|u_{n}|^2\;dx
\le C\mathcal E_n(u_n,u_n).
\end{align}

Next let $u \in \mathbb{U}$ be the weak solution to the Dirichlet problem \eqref{eq:Sd} 
according to Definition~\ref{def:weak_d_1} and let
$v \in L^2((0,T);W^{s,2}_{\Om,\kappa}\cap L^2(\Omc)) \cap H^1((0,T);
(W^{s,2}_{\Om,\kappa}\cap L^2(\Omc))^\star)$. Using the integration by parts formula 
\eqref{Int-Part} we get that 
\begin{align}\label{IN-I}
\langle \partial_t (u-u_n), v \rangle + 
\mathcal E_n(u-u_{n},v)=&\int_{\Omega}\left( \partial_t (u-u_n) + (-\Delta)^s(u-u_{n}) \right)v\;dx+\int_{\Omc}\mathcal N_s(u-u_{n})v\;dx\notag\\
&+n\int_{\Omc}\left(u-u_{n}\right) v\;d\mu\notag\\
=&\int_{\Omega}\left( \partial_t (u-u_n) + (-\Delta)^s(u-u_{n}) \right)v\;dx +\int_{\Omc}v\mathcal N_su\;dx\notag\\
&-\int_{\Omc}\left(\mathcal N_su_{n}+n\kappa(u_{n}-z)\right)v\;dx\notag\\
=&\int_{\Omc}v\mathcal N_su\;dx.
\end{align}
Subsequently letting $v = u-u_n$ in \eqref{IN-I} and using \eqref{Ine-eq-norm} we can conclude that
there is a constant $C >0$ (independent of $n$) such that 
\begin{align*}
C \langle \partial_t (u-u_n), u-u_n \rangle + 
n \|u-u_{n}\|_{L^2(\Omc)}^2
&\le C\left( \langle \partial_t(u-u_n),u-u_n \rangle + \mathcal E_n(u-u_{n},u-u_{n}) \right) \\
&= C \int_{\Omc}(u-u_{n})\mathcal N_su\;dx\\
&\le C \|u-u_{n}\|_{L^2(\Omc)}\|\mathcal N_su\|_{L^2(\Omc)}\\
&\le C \|u-u_{n}\|_{L^2(\Omc)} \|u\|_{W^{s,2}(\RR^N)} \\
&\le \frac{n}{2} \|u-u_{n}\|_{L^2(\Omc)}^2 + \frac{C^2}{2n} \|u\|_{W^{s,2}(\RR^N)}^2.
\end{align*}
Hence,
\[
C \langle \partial_t (u-u_n), u-u_n \rangle + 
\frac{n}{2} \|u-u_{n}\|_{L^2(\Omc)}^2 \le \frac{C}{n} \|u\|_{W^{s,2}(\RR^N)}^2,
\]
where we have replaced the constant $C^2$ by $C$. Since 
\[
    \langle \partial_t (u-u_n), u-u_n \rangle = \frac{1}{2} \partial_t \|u-u_n\|_{L^2(\Omc)}^2,
\]
we arrive at 
\[
\frac{C}{2} \partial_t \|u-u_n\|_{L^2(\Omc)}^2 + 
\frac{n}{2} \|u-u_{n}\|_{L^2(\Omc)}^2 \le \frac{C}{n} \|u\|_{W^{s,2}(\RR^N)}^2 .
\]
Then
\[
\frac{C}{2} \|u-u_n\|_{L^2(\Omc)}^2 + 
\frac{n}{2} \int_0^t \|u-u_{n}\|_{L^2(\Omc)}^2 \le \frac{C}{n} \int_0^t \|u\|_{W^{s,2}(\RR^N)}^2     
\]
which implies that 
\begin{equation}\label{B1}
\begin{cases}
    \|u-u_n\|_{L^\infty((0,T);L^2(\Omc))} \le \frac{C}{\sqrt{n}}\|u\|_{L^2((0,T);W^{s,2}(\RR^N))}  \\\
 \\
    \|u-u_n\|_{L^2((0,T);L^2(\Omc))} \le \frac{C}{n}\|u\|_{L^2((0,T);W^{s,2}(\RR^N))}  . 
    \end{cases}
\end{equation}
In order  to obtain \eqref{es-diff}, it then remains to have the estimate $\|u-u_n\|_{L^2((0,T);L^2(\Om))}$. 

We notice that  $L^2((0,T);L^2(\Omega))=L^2((0,T)\times\Omega)$ with equivalent norms and

\begin{align}\label{B1-1}
\|u-u_{n}\|_{L^2((0,T);L^2(\Om))}
=\sup_{\eta\in L^2((0,T);L^2(\Om))}\frac{\left|\int_0^T\int_{\Omega} (u-u_{n})\eta\;dxdt\right|}{\|\eta\|_{L^2((0,T);L^2(\Om))}}.
\end{align}
For any $\eta \in L^2((0,T);L^2(\Om))$ let $w \in \mathbb{U}_0$ solve the dual problem
\begin{equation}\label{edp}
  \begin{cases}
    -\partial_t w + (-\Delta)^s w &= \eta \quad \mbox{in } Q, \\
                w &= 0           \quad \mbox{in } \Sigma ,  \\
                w(T,\cdot) & = 0 \quad \mbox{in } \Om . 
 \end{cases}                
 \end{equation}
It follows from Proposition~\ref{prop:weak_Dir} that there is a unique solution to 
\eqref{edp} that fulfills 
\begin{align}\label{B2}
\|w\|_{L^2((0,T);W^{s,2}(\RR^N))}\le C\|\eta\|_{L^2((0,T);L^2(\Omega))}.
\end{align}
Notice that $w \in L^2((0,T);W_0^{s,2}(\overline\Om))$ and using \eqref{IN-I} we obtain that 
\begin{align*}
&\int_0^T \int_{\Omega}(u-u_{n})(-\partial_t w + (-\Delta)^sw)\;dxdt\\
=&\int_0^T \langle \partial_t (u-u_n), w \rangle\;dt + 
\frac{C_{N,s}}{2}\int\int_{\RR^{2N}\setminus(\Omc)^2}\frac{((u-u_{n})(x)-(u-u_{n})(y))(w(x)-w(y))}{|x-y|^{N+2s}}\;dxdy\\
&-\int_0^T\int_{\Omc}(u-u_{n})\mathcal N_sw\;dxdt\\
=&\int_0^T\langle \partial_t (u-u_n), w \rangle\;dt +\int_0^T\mathcal E_n(u-u_{n},w)\;dt-\int_0^T\int_{\Omc}(u-u_{n})\mathcal N_sw\;dxdt\\
=&\int_0^T\int_{\Omc}w\mathcal N_su\;dxdt-\int_0^T\int_{\Omc}(u-u_{n})\mathcal N_sw\;dxdt\\
=&-\int_0^T\int_{\Omc}(u-u_{n})\mathcal N_sw\;dxdt.
\end{align*}
Using the preceding identity, \eqref{B1} and \eqref{B2}, we obtain that
\begin{align}\label{B3}
\left|\int_0^T \int_{\Omega}(u-u_{n})(-\partial_t w + (-\Delta)^sw)\;dxdt\right|
=&\left|\int_0^T\int_{\Omc}(u-u_{n})\mathcal N_sw\;dx\right|\notag\\
\le& \|u-u_{n}\|_{L^2((0,T);L^2(\Omc))}\|\mathcal N_sw\|_{L^2((0,T);L^2(\Omc))}\notag\\
\le  &\frac{C}{n}\|u\|_{L^2((0,T);W^{s,2}(\RR^N))}\|w\|_{L^2((0,T);W^{s,2}(\RR^N))}\notag\\
\le&  \frac{C}{n}\|u\|_{L^2((0,T);W^{s,2}(\RR^N))}\|\eta\|_{L^2((0,T);L^2(\Omega))}.
\end{align}
Using \eqref{B1-1} and \eqref{B3} we get that
\begin{align}\label{B4}
\|u-u_{n}\|_{L^2((0,T);L^2(\Omega))}\le \frac{C}{n}\|u\|_{L^2((0,T);W^{s,2}(\RR^N))}.
\end{align}
Now the estimate \eqref{es-diff} follows from \eqref{B1} and \eqref{B4} and the proof
of Part (a) is complete. \\

(b) Let $z \in L^2((0,T);L^2(\Omc))$. Using our assumption, we immediately notice that 
$L^2(\Omc)\hookrightarrow L^2(\Omc,\mu)$. In addition, $\{u_{n}\}_{n\in\NN}$ satisfies 
\eqref{RP-WS}. Then similarly to \eqref{Ine-eq-norm} we deduce that 
\begin{align*}
C \langle \partial_t u_n, u_n \rangle +n\|u_n\|_{L^2(\Omc)}^2
\le& C ( \langle \partial_t u_n, u_n \rangle + \mathcal E_n(u_n,u_n)) \\
\le& nC\|\kappa\|_{L^\infty(\Omc)}\|z\|_{L^2(\Omc)} \|u_n\|_{L^2(\Omc)},
\end{align*}
for almost every $t \in (0,T)$. 
Since $\langle \partial_t u_n, u_n \rangle = \frac{1}{2}\partial_t \|u_n\|^2_{L^2(\Omc)}$, we 
obtain that 
\begin{align}\label{weq-2es}
\|u_n\|_{L^2((0,T);L^2(\Omc))}\le C\|z\|_{L^2((0,T);L^2(\Omc))}.
\end{align}
In order to show that $\|u_n\|_{L^2((0,T);L^2(\Om))}$ is uniformly bounded, 
we can proceed as in \eqref{B4}, i.e., by using a duality argument.
Let $\eta\in L^2((0,T);L^2(\Omega))$ and $w\in \mathbb{U}_0$ be the weak solution 
of \eqref{edp}. Then using \eqref{RP-WS} and $w \in L^2((0,T);W^{s,2}_0(\overline\Om))$,
we obtain 
\begin{align*}
&\int_{Q}u_{n} \eta \;dx 
 = \int_{Q}u_{n}(-\partial_t w + (-\Delta)^sw)\;dx\\
=&\int_0^T\langle \partial_t u_n,w \rangle\;dt 
 + \frac{C_{N,s}}{2} \int_0^T\int\int_{\RR^{2N}\setminus(\Omc)^2}\frac{(u_{n}(x)-u_{n}(y))(w(x)-w(y))}{|x-y|^{N+2s}}\;dxdy\\
 &-\int_0^T\int_{\Omc}u_{n}\mathcal N_sw\;dx\\
=&-\int_0^T\int_{\Omc}u_{n}\mathcal N_sw\;dxdt.
\end{align*}
Then using the above identity, \eqref{weq-2es} and \eqref{B2} we obtain that
\begin{align}\label{wB3}
\left|\int_0^T\int_{\Omega}u_{n}\eta \;dxdt\right|
 =&\left|\int_0^T\int_{\Omc}u_{n}\mathcal N_sw\;dxdt\right|
\le \|u_{n}\|_{L^2((0,T);L^2(\Omc))}\|\mathcal N_sw\|_{L^2((0,T);L^2(\Omc))}\notag\\
\le  &C\|z\|_{L^2((0,T);L^{2}(\Omc))}\|w\|_{L^2((0,T);W^{s,2}(\RR^N))} \notag \\
\le &C\|z\|_{L^2((0,T);L^{2}(\Omc))}\|\eta\|_{L^2((0,T);L^2(\Om))} .
\end{align}
Thus
\begin{align}\label{wB4}
\|u_{n}\|_{L^2((0,T);L^2(\Omega))}\le C\|z\|_{L^2((0,T);L^{2}(\Omc))}.
\end{align}
Combing \eqref{weq-2es} and \eqref{wB4} we get that
\begin{align}\label{wB5}
\|u_{n}\|_{L^2((0,T);L^2(\RR^N))}\le C\|z\|_{L^2((0,T);L^{2}(\Omc))}.
\end{align}
Therefore the sequence $\{u_{n}\}_{n\in\NN}$ is bounded in $L^2((0,T);L^2(\RR^N))=L^2((0,T)\times\RR^N)$.
Thus, after a subsequence, if necessary, we have that $u_{n}$ converges 
weakly to some $\tilde u$ in $L^2((0,T);L^2(\RR^N))$ as $n\to\infty$. 

It then remains to show \eqref{eq63}. By \eqref{RP-WS},  for every 
$v \in L^2((0,T);V) \cap H^1((0,T);L^2(\Om))$ with $v(T,\cdot) = 0$ we have that 
\begin{align}\label{RP-WS-2}
\int_0^T \langle\partial_t u_n, v \rangle + 
\frac{C_{N,s}}{2}\int_0^T\int\int_{\RR^{2N}\setminus(\Omc)^2}&\frac{(u_{n}(x)-u_{n}(y))(v(x)-v(y))}{|x-y|^{N+2s}}\;dxdy = 0.
\end{align}
Next, applying the integration by parts formula \eqref{Int-Part} we can deduce that 
\begin{align}\label{RP-WS-3}
\int_0^T \langle\partial_t u_n, v \rangle\;dt + 
\frac{C_{N,s}}{2}\int_0^T&\int\int_{\RR^{2N}\setminus(\Omc)^2}\frac{(u_{n}(x)-u_{n}(y))(v(x)-v(y))}{|x-y|^{N+2s}}\;dxdydt \notag \\ 
&= \int_Q u_n (-\partial_t v + (-\Delta)^s v)\;dxdt
+\int_\Sigma u_{n}\mathcal N_sv\;dx,
\end{align}
for every $v \in L^2((0,T);V) \cap H^1((0,T);L^2(\Om))$ with $v(T,\cdot) = 0$. 
Combining \eqref{RP-WS-2} and \eqref{RP-WS-3} we get the identity
\begin{align}\label{WS1}
\int_Q u_n (-\partial_t v + (-\Delta)^s v)\;dxdt
+\int_\Sigma u_{n}\mathcal N_sv\;dx = 0,
\end{align}
for every $v \in L^2((0,T);V) \cap H^1((0,T);L^2(\Om))$ with $v(T,\cdot) = 0$. 
Taking the limit as $n\rightarrow \infty$ in \eqref{WS1} we obtain that 
\begin{align*}
\int_Q \tilde{u} (-\partial_t v + (-\Delta)^s v)\;dxdt
+\int_\Sigma \tilde{u}\mathcal N_sv\;dx = 0,
\end{align*}
for every $v \in L^2((0,T);V) \cap H^1((0,T);L^2(\Om))$ with $v(T,\cdot) = 0$. 
We have shown \eqref{eq63} and  the proof is finished.
\end{proof}

We conclude this section with the approximation of the parabolic Dirichlet control 
problem \eqref{eq:dcp} via the following ``regularized" (which is nothing but 
the Robin control problem) optimal control problem: Let $Z_R :=  L^2((0,T);L^2(\Omc))$ and
then 
\begin{subequations}\label{eq:ncpR}
 \begin{equation}\label{eq:JnR}
    \min_{(u,z)\in (U_R, Z_R)} J(u) + \frac{\xi}{2} \|z\|^2_{Z_R} ,
 \end{equation}
 subject to the fractional parabolic Robin exterior value problem: Find $u \in U_R$ solving 
 \begin{equation}\label{eq:SnR}
 \begin{cases}
    \partial_t u + (-\Delta)^s u = 0 \quad &\mbox{in } Q, \\
    \mathcal{N}_s u  +n\kappa u = n\kappa z \quad &\mbox{in } \Sigma, \\
    u(0,\cdot) = 0 \quad& \mbox{in } \Om ,
 \end{cases}                
 \end{equation} 
 and the control constraints 
 \begin{equation}\label{eq:ZnR}
    z \in Z_{ad,R} .
 \end{equation}
 \end{subequations}
\begin{theorem}[\bf Approximation of the parabolic Dirichlet control problem]
     The regularized control problem \eqref{eq:ncpR} admits a minimizer 
     $(z_n, u(z_n)) \in Z_{ad,R} \times L^2((0,T);W^{s,2}_{\Om,\kappa}\cap L^2(\Omc)) \cap H^1((0,T);(W^{s,2}_{\Om,\kappa}\cap L^2(\Omc))^\star)$. If 
     $Z_R = L^2((0,T);W^{s,2}(\Omc))$ and $Z_{ad,R} \subset Z_R$ is bounded
     then for any sequence $\{n_\ell\}_{\ell=1}^\infty$ with $n_\ell \rightarrow
     \infty$, there exists a subsequence still denoted by $\{n_\ell\}_{\ell=1}^\infty$
     such that $z_{n_\ell} \rightharpoonup \tilde{z}$ in $L^2((0,T);W^{s,2}(\Omc))$
     and $u(z_{n_\ell}) \rightarrow u(\tilde{z})$ in $L^2((0,T);L^2(\RR^N))$
     as $n_\ell \rightarrow \infty$ with $(\tilde{z},u(\tilde{z}))$ solving the
     parabolic Dirichlet control problem \eqref{eq:dcp} with $Z_{ad,D}$ replaced
     by $Z_{ad,R}$. 
\end{theorem}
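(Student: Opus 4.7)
The plan is to combine weak compactness of the minimizers of the regularized problems with the state-approximation result of Theorem~\ref{thm:approx_dbcp}, in a $\Gamma$-convergence/direct-method fashion. For each fixed $n$, existence of a minimizer $(z_n,u(z_n))$ of \eqref{eq:ncpR} follows from Theorem~\ref{thm:rexist} applied with $\kappa$ replaced by $n\kappa$; the admissibility hypotheses on $\kappa$ are preserved and Lemma~\ref{le62} handles the enlarged state space. Under the additional assumption that $Z_{ad,R}\subset Z_R=L^2((0,T);W^{s,2}(\Omc))$ is bounded, $\{z_{n_\ell}\}$ is bounded in this reflexive Hilbert space, so along a (not-relabelled) subsequence $z_{n_\ell}\rightharpoonup\tilde z$ weakly, and $\tilde z\in Z_{ad,R}$ by weak closedness. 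The uniform-in-$n$ estimate \eqref{wB5} bounds $\{u(z_{n_\ell})\}$ in $L^2((0,T);L^2(\RR^N))$, so along a further subsequence $u(z_{n_\ell})\rightharpoonup\tilde u$. Mimicking the limiting argument of Theorem~\ref{thm:approx_dbcp}(b), passing to the limit in \eqref{WS1} against any fixed $v\in L^2((0,T);V)\cap H^1((0,T);L^2(\Om))$ with $v(T,\cdot)=0$ and using that $z_{n_\ell}\rightharpoonup\tilde z$ in $L^2((0,T);L^2(\Omc))$ pairs with the fixed $\mathcal N_sv\in L^2((0,T);L^2(\Omc))$, identifies $\tilde u$ as the very-weak Dirichlet solution with exterior datum $\tilde z$; uniqueness in Theorem~\ref{thm:vwdexist} then gives $\tilde u=u(\tilde z)$.

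For any competitor $z\in Z_{ad,R}$, approximate $z$ in $L^2((0,T);W^{s,2}(\Omc))$ by smooth $z_k\in H^1((0,T);W^{s,2}(\Omc))$; Theorem~\ref{thm:approx_dbcp}(a) combined with the uniform-in-$n$ stability \eqref{wB5} and a diagonal extraction yields $u_{n_\ell}(z)\to u(z)$ strongly in $L^2((0,T);L^2(\RR^N))$. The minimality
\begin{align*}
J(u(z_{n_\ell}))+\tfrac{\xi}{2}\|z_{n_\ell}\|^2_{Z_R}\le J(u_{n_\ell}(z))+\tfrac{\xi}{2}\|z\|^2_{Z_R},
\end{align*}
together with $\liminf$ on the left (weak lower semicontinuity of $J$ and of the norm) and $\lim$ on the right, shows that $(\tilde z,u(\tilde z))$ minimizes \eqref{eq:dcp} over $Z_{ad,R}$. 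Inserting $z=\tilde z$ in the same inequality, the two-sided squeeze forces $\|z_{n_\ell}\|_{Z_R}\to\|\tilde z\|_{Z_R}$; norm convergence plus weak convergence in the Hilbert space $Z_R$ upgrades to strong convergence $z_{n_\ell}\to\tilde z$ in $Z_R$, hence in $L^2((0,T);L^2(\Omc))$. The decomposition
\begin{align*}
u(z_{n_\ell})-u(\tilde z) = u_{n_\ell}(z_{n_\ell}-\tilde z) + \bigl(u_{n_\ell}(\tilde z)-u(\tilde z)\bigr),
\end{align*}
combined with \eqref{wB5} applied to the first summand and the diagonal argument above to the second, delivers the required strong state convergence.

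The main obstacle is the transfer of Theorem~\ref{thm:approx_dbcp}(a) from its ``fixed smooth datum'' setting to the ``variable minimizing sequence'' setting here, which forces one to bridge the gap between the $H^1$-in-time regularity demanded there and the merely $L^2$-in-time regularity available for admissible controls. The uniform-in-$n$ character of the stability \eqref{wB5} is what makes the density/diagonal argument succeed, and it is also essential for the final triangle-inequality estimate; apart from this, the proof is a standard direct-method optimization combined with weak-lower-semicontinuity.
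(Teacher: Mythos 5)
The paper does not supply a proof of this theorem; it simply defers to the elliptic case in \cite{HAntil_RKhatri_MWarma_2018a}. So your attempt is genuinely new work, and its overall architecture (existence $\to$ weak compactness $\to$ identification of the limit state $\to$ $\liminf$--$\limsup$ sandwich $\to$ norm upgrade $\to$ strong state convergence) is the natural one. The density/diagonal step, and the use of the $n$-uniform bound \eqref{wB5} for both the recovery sequence and the triangle decomposition, are correct and well chosen.

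However, the identification step contains a genuine gap. You claim that passing to the limit in \eqref{WS1} ``using that $z_{n_\ell}\rightharpoonup\tilde z$ in $L^2((0,T);L^2(\Omc))$ pairs with the fixed $\mathcal N_sv$'' identifies $\tilde u$ as $u(\tilde z)$. But the exterior term in \eqref{WS1} is $\int_\Sigma u_{n_\ell}\,\mathcal N_s v\,dx\,dt$, \emph{not} $\int_\Sigma z_{n_\ell}\,\mathcal N_s v\,dx\,dt$: it is the Robin state $u_{n_\ell}$, not the control $z_{n_\ell}$, that pairs with $\mathcal N_s v$ there. Taking the weak limit you therefore only recover the self-consistency identity $\int_Q \tilde u(-\partial_t v+(-\Delta)^s v) + \int_\Sigma \tilde u\,\mathcal N_s v = 0$, which is exactly \eqref{eq63}; this is not the very-weak formulation of \eqref{eq:Sd} with datum $\tilde z$ unless you also show $\tilde u=\tilde z$ on $\Omc$. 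The paper itself is careful about this distinction: Theorem~\ref{thm:approx_dbcp}(b) only proves \eqref{eq63}, not that $\tilde u$ is the Dirichlet solution. To close the gap you would need a separate argument that $u_{n_\ell}-z_{n_\ell}\rightharpoonup 0$ in $L^2((0,T);L^2(\Omc))$; the energy estimate from \eqref{RP-WS} only yields a uniform bound $\|u_{n}-z_{n}\|_{L^2((0,T);L^2(\Omc,\mu))}\le \|z_{n}\|_{L^2((0,T);L^2(\Omc,\mu))}$, not decay, and the quantitative estimate of Theorem~\ref{thm:approx_dbcp}(a) requires $z_{n_\ell}\in H^1((0,T);W^{s,2}(\Omc))$, which the admissible controls are not assumed to have. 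Because the identification $\tilde u=u(\tilde z)$ is then used to derive the lower semicontinuity inequality and the norm-convergence trick (and hence the strong convergences that conclude the proof), this missing piece is load-bearing rather than cosmetic; the remainder of the argument is fine conditional on it.
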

\begin{proof}
    The proof is similar to the elliptic case \cite{HAntil_RKhatri_MWarma_2018a} with obvious modifications and has
    been omitted for brevity. 
\end{proof}

We conclude this section by writing the stationarity system corresponding to 
\eqref{eq:ncpR}: Find  
$(z,u,p) \in Z_{ad,R} \times 
\left( L^2((0,T);W^{s,2}_{\Om,\kappa}\cap L^2(\Omc)) \cap H^1((0,T);(W^{s,2}_{\Om,\kappa}\cap L^2(\Omc))^\star) \right)^2$ with $u(0,\cdot) = p(T,\cdot) = 0$ in $\Om$ such that 

 \begin{equation}\label{eq:statptR}
 \begin{cases}
\langle \partial_t u, v \rangle + 
\displaystyle \mathcal E_n(u,v)  &= \int_{\Omc} n\kappa z v\;dx, \quad \mbox{a.e. } t \in (0,T), \\
\langle -\partial_t p, w \rangle + \displaystyle\mathcal E_n(w,p)    &= \int_{\Om}J'(u)w \;dx , \quad \mbox{a.e. } t \in (0,T), \\
\displaystyle   \int_{\Sigma} (n\kappa p+\xi z) (\widetilde{z}-z)\;dx &\ge 0 , 
 \end{cases} 
 \end{equation}
for all $(\widetilde{z},v,w) \in Z_{ad} \times (W^{s,2}_{\Om,\kappa} \cap L^2(\Omc))\times  (W^{s,2}_{\Om,\kappa}\cap L^2(\Omc))$. Here $\mathcal{E}_n$ is as in \eqref{eq:ddd}.

\section{Numerical Approximations}\label{s:numerics}

In this section, we shall introduce the numerical approximation of all the problems we have 
considered so far. We remark that solving parabolic fractional PDEs is a delicate issue. One
has to assemble the integrals with singular kernels and the resulting system matrices are dense. 
On the top of that, the optimal control problem requires solving the state equation forward in
time and adjoint equation backward in time. This can be prohibitively expensive. The purpose
of this section is simply to illustrate that the numerical results are in agreement with the 
theory and to show the benefits of the fractional optimal control problem. 

The rest of the section is organized as follows: In subsection~\ref{s:num_Rob} we first 
focus on the approximations of the Robin problem which is same as the regularized Dirichlet 
problem \eqref{eq:Sn-G-Reg}. With the help of a numerical example, we illustrate the sharpness 
of Theorem~\ref{thm:approx_dbcp}. This is followed by a source identification problem in 
subsection~\ref{s:num_sour}. The numerical example presented in subsection~\ref{s:num_sour} 
clearly indicates the strength and flexibility of nonlocal problems over the local ones.

\subsection{Approximation of parabolic Dirichlet problem by parabolic Robin problem}
\label{s:num_Rob}

We begin by introducing a discrete scheme for the parabolic Robin problem \eqref{eq:Sn-G-Reg} 
and recall that we can approximate the parabolic Dirichlet problem by the parabolic Robin problem. 
Let $\widetilde\Om$ be an open bounded set that contains $\Om$, the support of $z$, and the support
of $\kappa$. We consider a conforming simplicial triangulation of $\Om$ and $\widetilde\Om\setminus\Om$ such that the
resulting partition remains admissible. Throughout we will assume that the support of $z$ and
$\kappa$ are contained in $\widetilde\Om\setminus\Om$. Let $\mathbb{V}_h$ (on $\widetilde\Om$)
be the finite element space of continuous piecewise linear functions. We use the backward-Euler 
to carry out the time discretization: Let $K$ denote the number of time intervals, we set the
time-step to be $\tau = T/K$, then for $k = 1,\dots, K$, the fully discrete approximation of 
\eqref{eq:Sn-G-Reg} with nonzero right-hand-side $f$ and initial datum $u^{(0)} = u(0,\cdot)$ 
is given by: find $u^{(k)}_h \in \mathbb{V}_h$ such that 
 \begin{align}\label{eq:discR}
 \begin{aligned}
  \int_\Om u_h^{(k)}v\;dx + 
  \tau\mathcal{E}_n(u^{(k)}_h,v) 
  &= \tau \langle f^{(k)},v\rangle 
  + \tau \int_{\widetilde\Om \setminus \Om} n\kappa z^{(k)} v\;dx \\
  &\quad+\int_\Om u_h^{(k-1)}v\;dx 
  \quad \forall v \in \mathbb{V}_h ,
 \end{aligned}   
 \end{align}    
where $\mathcal{E}_n$ is as in \eqref{eq:ddd}. 
The approximation of the double integral over $\RR^{2N}\setminus(\Omc)^2$ is carried out using
the approach of \cite{acosta2017short}. The remaining integrals are computed using 
quadrature which is accurate for polynomials of degree less than and equal to 4. All the 
implementations are carried in Matlab and we use the direct solver to solve the linear systems. 
    
We next consider an example of a parabolic Dirichlet problem with nonzero exterior conditions.
Let $\Om = B_0(1/2) \subset \RR^2$ and $T =1$, we aim to find $u$ solving 
    \begin{align}\label{eq:u1u2}
     \begin{cases}
        \partial_t u + (-\Delta)^s u &= u_{\rm exact} + e^t \quad \mbox{in } Q, \\
                        u(\cdot,t)   &= u_{\rm exact}(\cdot,t)
                                    \quad \mbox{in } \Sigma ,\\
                        u(\cdot,0)   &= u_{\rm exact}(\cdot,0)
                                    \quad \mbox{in } \Om      .       
     \end{cases}                               
    \end{align}
The exact solution for this problem is given by 
    \[
        u_{\rm exact}(x,t) 
            = \frac{2^{-2s} e^t}{\Gamma(1+s)^2} 
       \left( 1 - |x|^2 \right)_{+}^s . 
    \]

We set $\widetilde\Om = B_0(1.5)$ and approximate \eqref{eq:u1u2} 
by using \eqref{eq:discR}. Moreover, we set $\kappa = 1$. We divide
the time interval $(0,1)$ into 1800 subintervals. For a fixed $s = 0.6$ and spatial
Degrees of Freedom (DoFs) $= 6017$, we study the $L^2(0,T;L^2(\Omega))$ error 
$\|u_{\rm exact}-u_h\|_{L^2(0,T;L^2(\Omega))}$ with respect to $n$ in Figure~\ref{eq:fig_rate} (left). 
We obtain a convergence rate of $1/n$, as predicted by Theorem~\ref{thm:approx_dbcp} (a).

\begin{figure}[h!]
\centering
\includegraphics[width=0.35\textwidth]{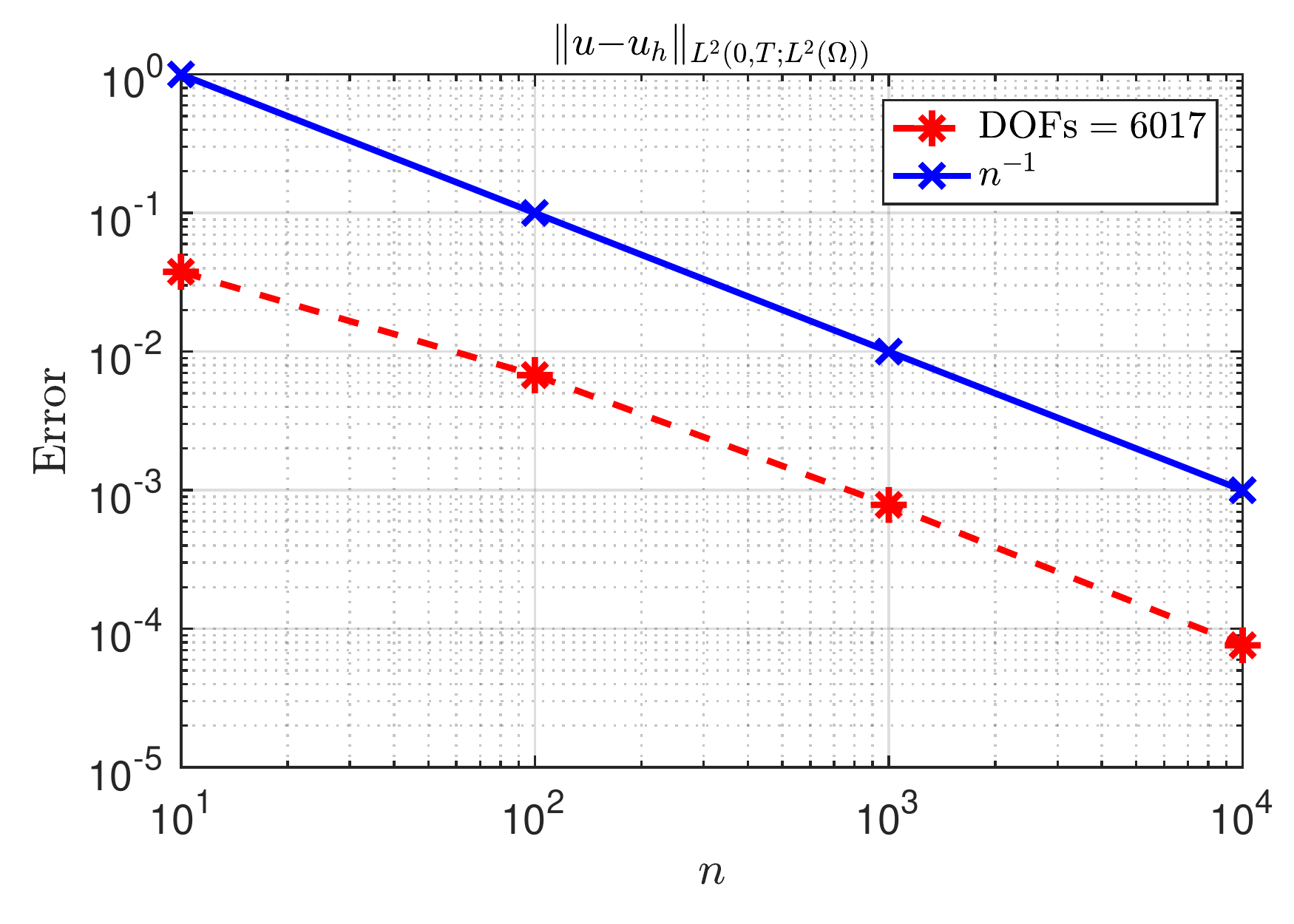}
\includegraphics[width=0.35\textwidth]{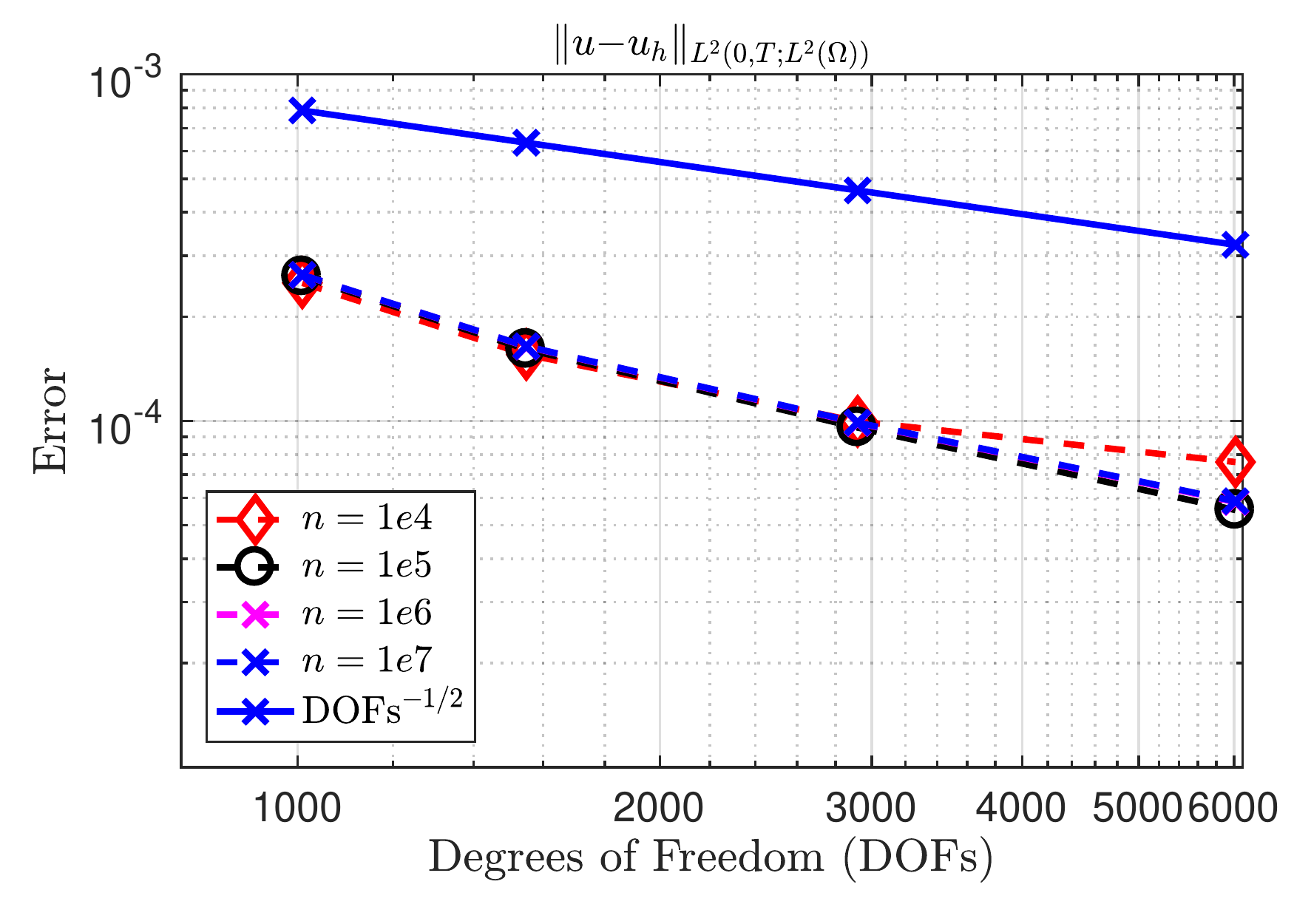}
\caption{Left panel: Fix $s = 0.6$, Degrees of Freedom (DoFs) $= 6017$.
The number of time intervals is 1800. The solid line denotes the reference line and
the dottle line is the actual error. We observe that the error 
$\|u_{\rm exact}-u_h\|_{L^2(0,T;L^2(\Omega))}$ with respect to $n$ decays at the rate of 
$1/n$ as predicted by the estimate \eqref{es-diff} in Theorem~\ref{thm:approx_dbcp}(a). 
Right panel: Let $s=0.6$ and number of time intervals = 1800, be fixed. We have shown
that the error with respect to spatial DoFs, for $n=10^4, n=10^5, n = 10^6$, and $n = 10^7$,
behaves as $({\rm DoFs})^{-\frac12}$. 
\label{eq:fig_rate}}
\end{figure}

In the right panel, in Figure~\ref{eq:fig_rate}, we have shown the error 
$\|u_{\rm exact}-u_h\|_{L^2(0,T;L^2(\Omega))}$ for a fixed $s = 0.6$, but $n = 1e4, 1e5, 1e6, 1e7$, 
as a function of DoFs. We observe that the error remains stable with
respect to $n$ as we refine the spatial mesh. Moreover, the observed rate of convergence 
is $({\rm DoFs})^{-\frac12}$.

\subsection{Parabolic source/control identification problem}
\label{s:num_sour}

After the validation in the previous example, we are now ready to consider a 
source/control identification problem where the source/control is located outside the domain
$\Om$. The optimality system is as given in \eqref{eq:statptR}. The spatial discretization 
of all the optimization variables $(u,z,p)$ is carried out using continuous
piecewise linear finite elements and time discretization using backward-Euler.
We set the objective function to be 
    \[
        j(u,z) := J(u)
               + \frac{\xi}{2} \|z\|^2_{L^2((0,T);L^2(\Omc))}, \quad 
               \mbox{with} \quad J(u) :=  \frac12 \|u-u_d\|^2_{L^2((0,T);L^2(\Om))}  ,
    \]
where $u_d : L^2((0,T);L^2(\Om)) \rightarrow \mathbb{R}$ is the given data (observations). 
Moreover, we let $Z_{ad,R} := \{ z\in L^2((0,T);L^2(\Omc)) \;:\; z \ge 0, \ \mbox{a.e. in } (0,T) \times \widehat{\Om} \}$  
where $\widehat{\Om}$ is the support set of the control $z$ that is contained in 
$\widetilde\Om\setminus \Om$. We solve the optimization problem using projected-BFGS method 
with Armijo line search. 

We consider the domain as given in Figure~\ref{f:ex2_setup}. The circle denotes 
$\widetilde\Om = B_0(3/2)$ and the larger square denotes the domain $\Om = [-0.4,0.4]^2$. 
The smaller square, inside $\widetilde\Om \setminus \Om$, is $\widehat\Om$ is where
the source/control is supported. The right panel shows a finite element mesh with 
DoFs = 6103. 
 \begin{figure}[htb] 
  \centering
  \includegraphics[width = 0.22\textwidth]{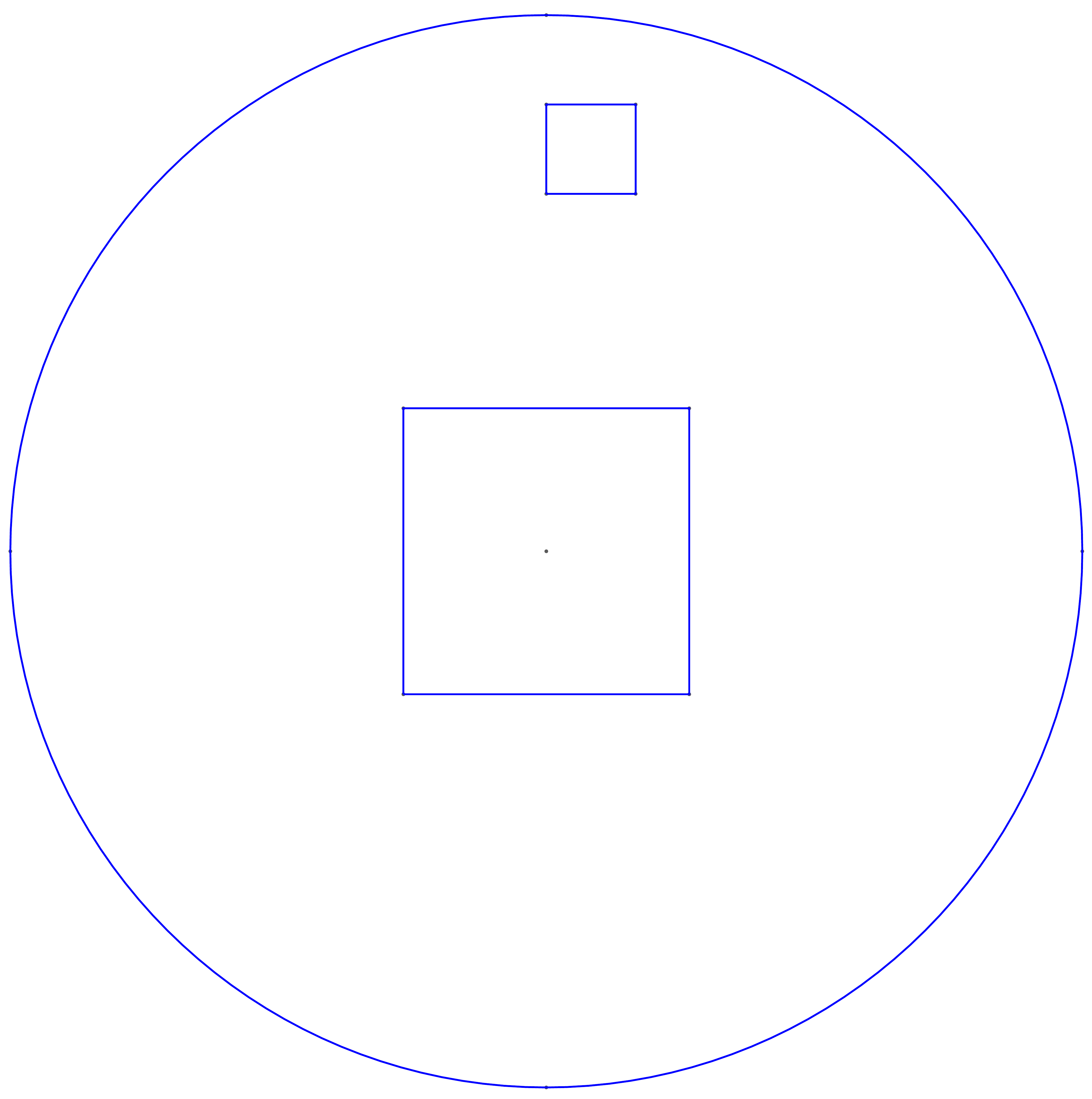}\qquad 
  \includegraphics[width = 0.25\textwidth]{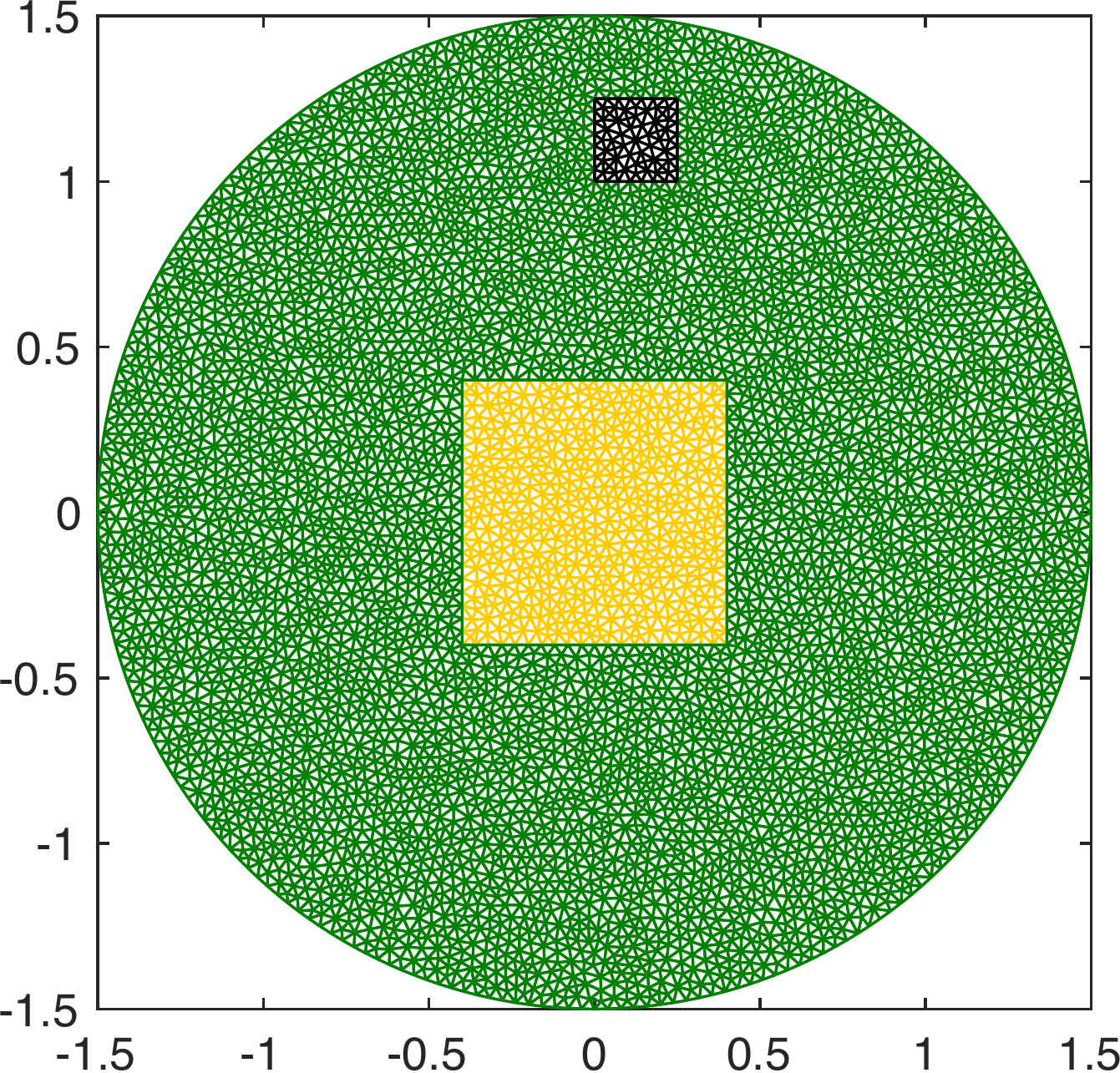}
  \caption{\label{f:ex2_setup}
  {\bf Left panel:} The circle denotes $\widetilde\Om$ and the larger 
  square denotes the domain $\Omega$. Moreover, the outer square inside 
  $\widetilde\Om \setminus \Om$ is $\widehat\Om$, i.e., the region 
  where the source/control is supported. {\bf Right panel}: A finite element mesh.}
 \end{figure}

We generate the data $u_d$ as follows: for $z = 1$, we solve the state equation
(first equation in \eqref{eq:statptR}). We then add a normally distributed noise
with mean zero and standard deviation 0.005. We call the resulting expression 
$u_d$. In addition, we set $\kappa = 1$ and $n = 1e7$. 

Next, we identify the source $\bar{z}_h$ by solving the optimality system  
\eqref{eq:statptR}. For $\xi = 1e$-8, our results are shown in 
Figure~\ref{fig:identification}. In the first two rows, we have plotted 
$\bar{z}_h$ for $s = 0.1$ at 4 time instances $t = 0.25, 0.3, 0.43, 0.58$. 
The third row shows $\bar{z}_h$ for $s = 0.8$ at only one of these time
instances since $\bar{z}_h$ is zero at the remaining three time instances.
This is not surprising, since as $s$ approaches $1$, the fractional Laplacian
approaches the standard Laplacian which does not allow a source placement outside
the closure of $\Omega$. 

\begin{figure}[h!]
\centering
\includegraphics[width=0.4\textwidth]{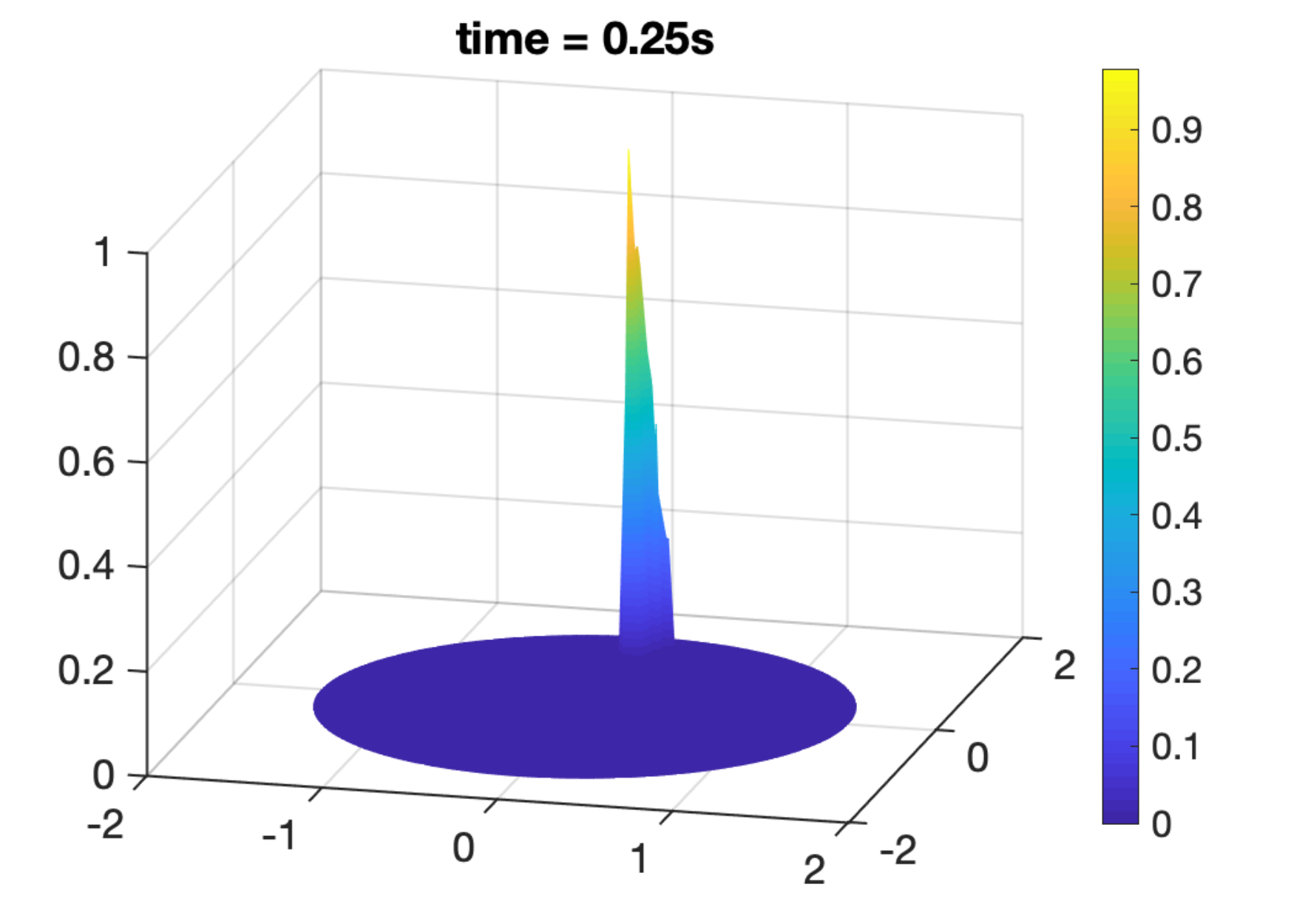}
\includegraphics[width=0.4\textwidth]{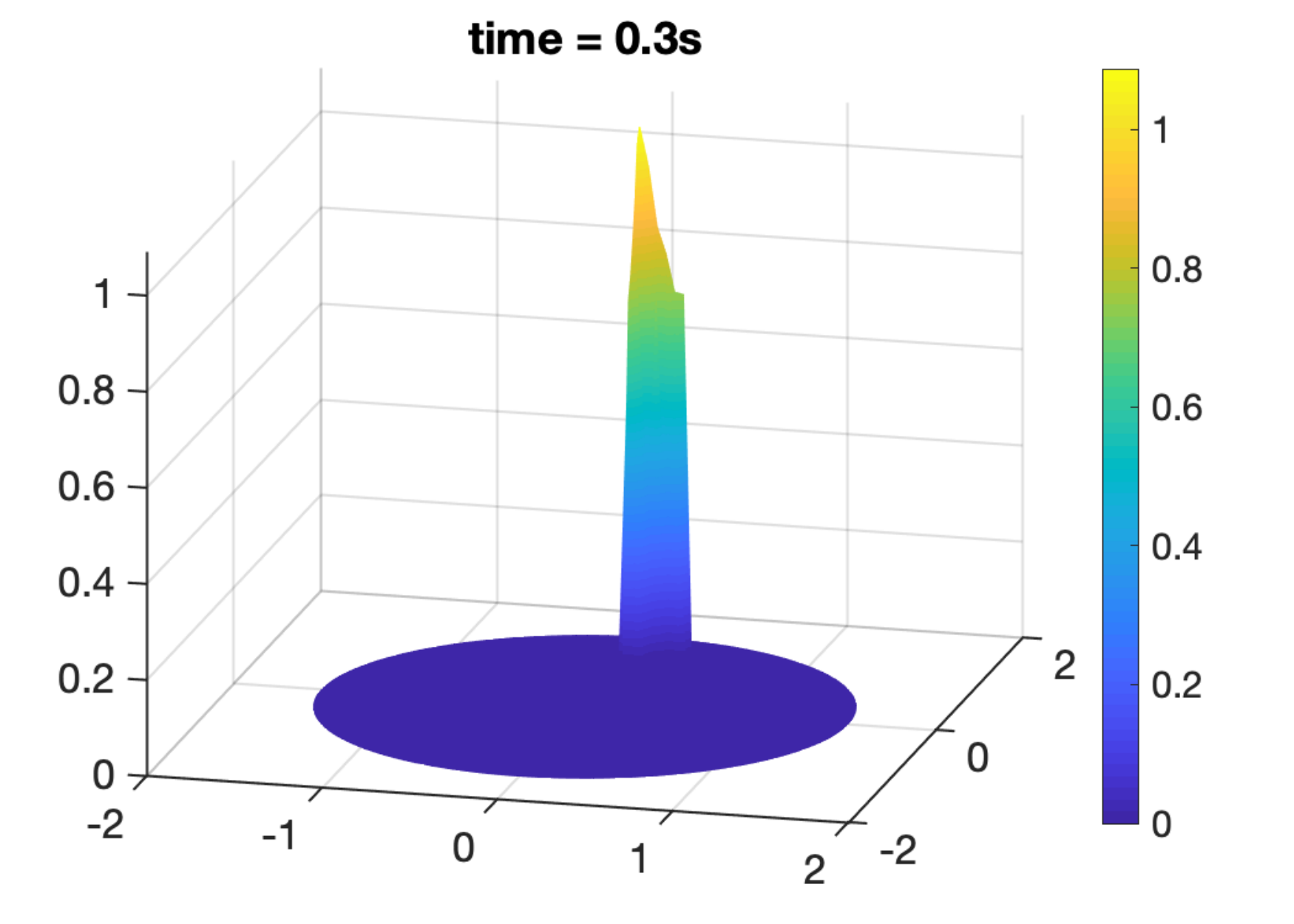}
\includegraphics[width=0.4\textwidth]{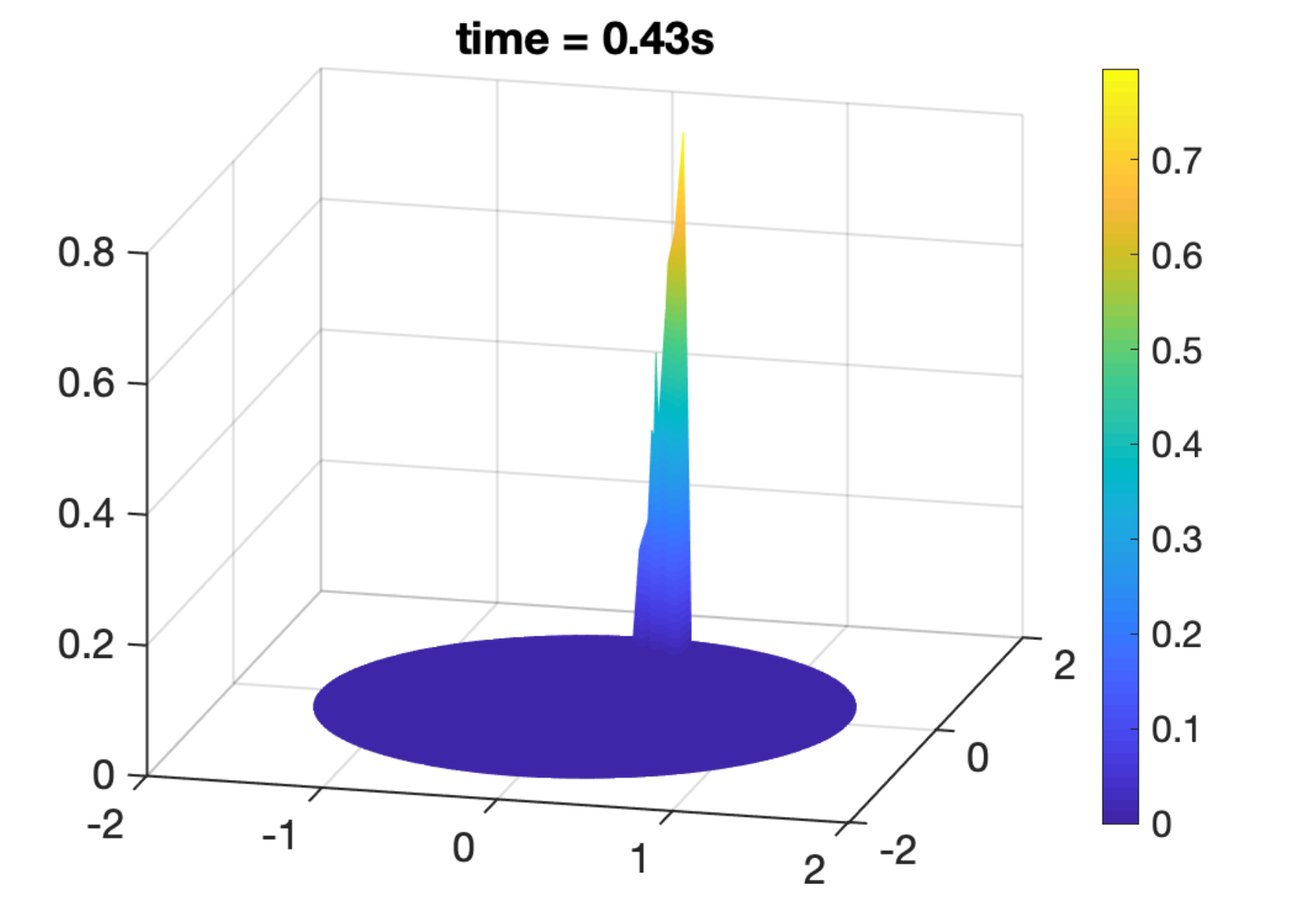}
\includegraphics[width=0.4\textwidth]{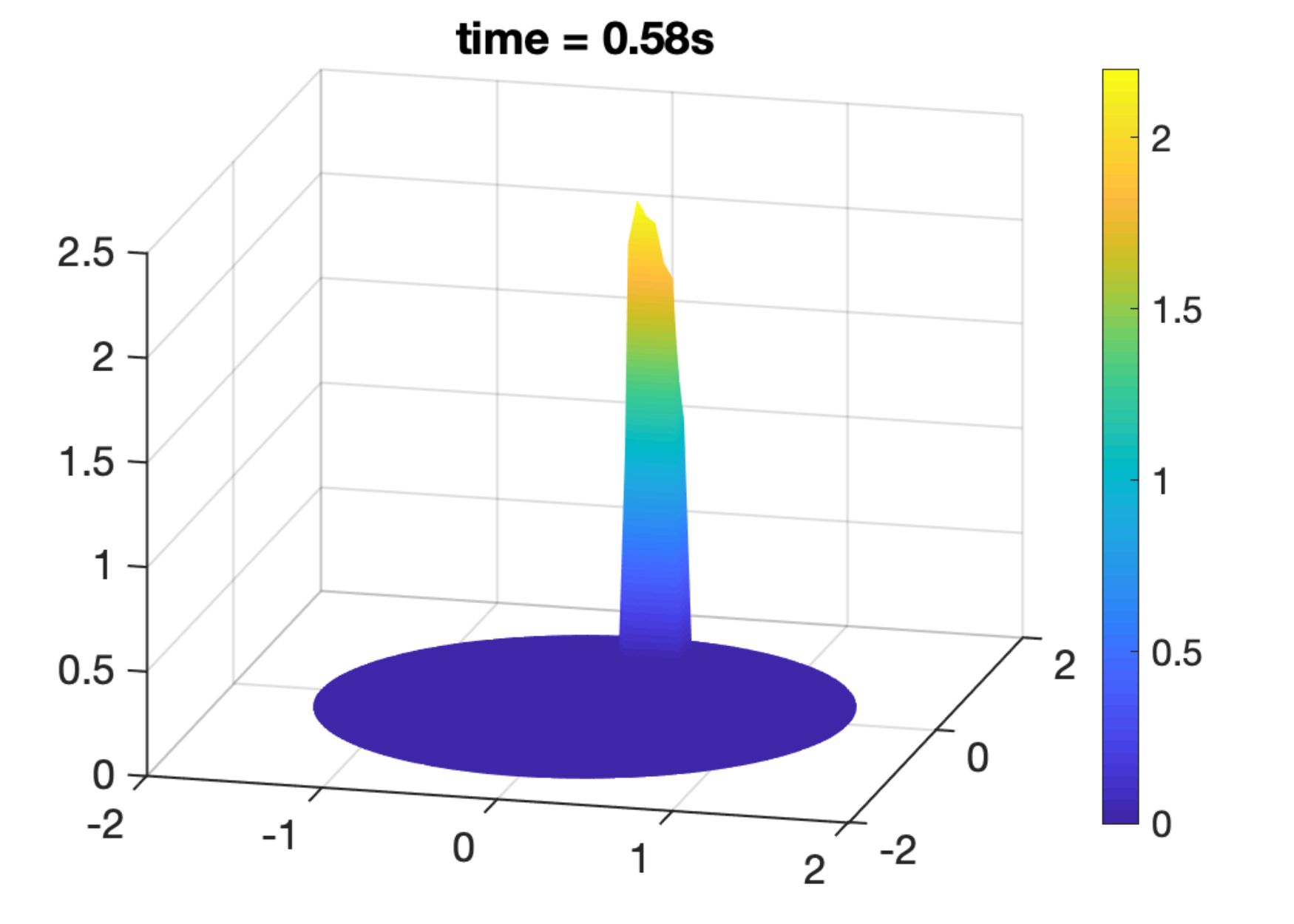}
\includegraphics[width=0.4\textwidth]{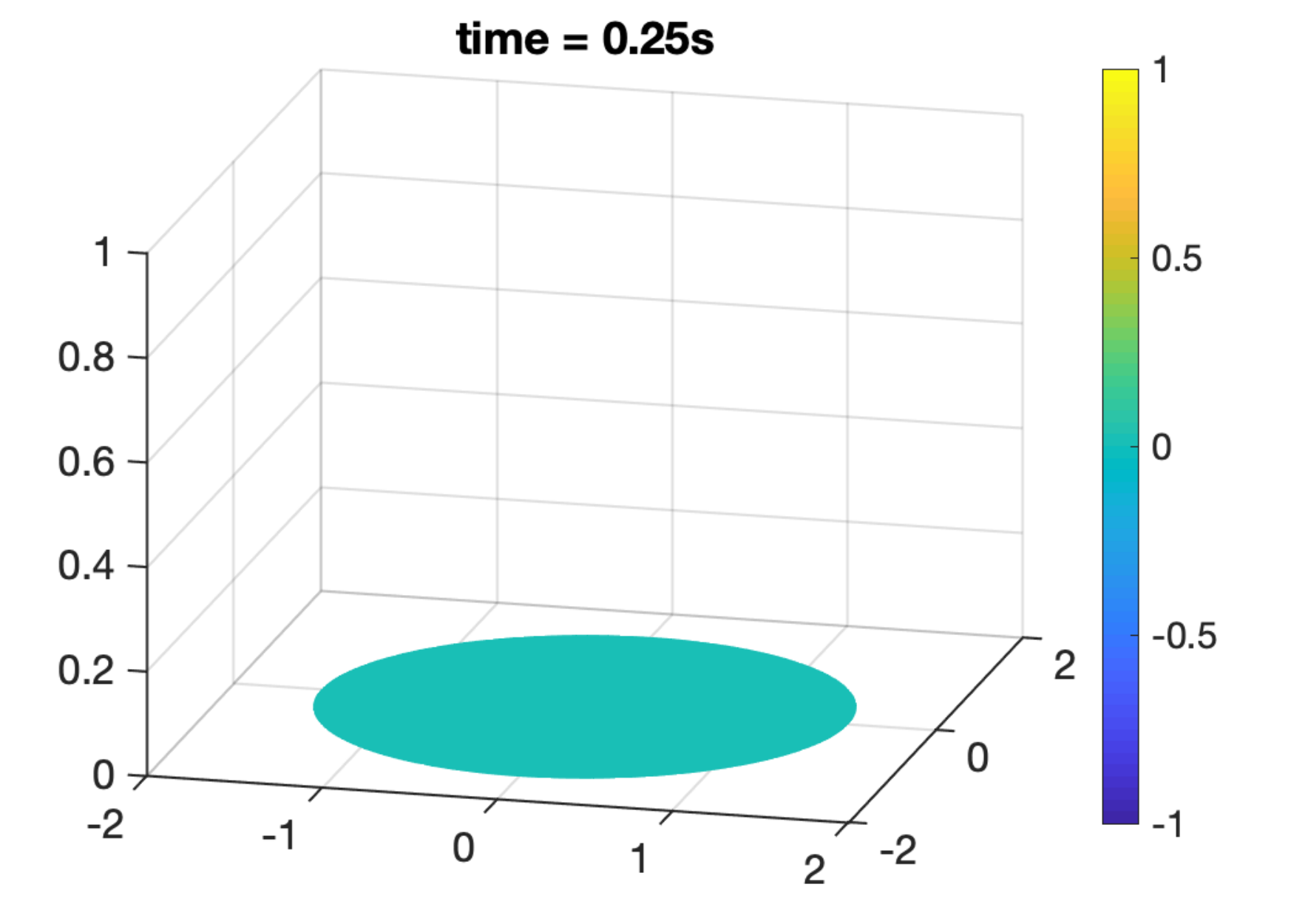}
\caption{\label{fig:identification}The first and second row shows the source 
$\bar{z}_h$ 
for exponent $s=0.1$ at 4 different time instances, $t = 0.25s, 0.3s, 0.43s, t=0.58s$.
The last row shows $\bar{z}_h$ for exponent $s = 0.8$ at $t = 0.25s$. Notice that we 
have only shown one frame for $s = 0.8$ because $\bar{z}_h \equiv 0$ 
at $t = 0.3s, 0.43s, t=0.58s$. This comparison between $\bar{z}_h$ for 
$s=0.1$ and $s=0.8$ clearly indicates that we can recover the sources for 
smaller values of $s$ but $s$ approaches 1, since the fractional Laplacian
approaches the standard Laplacian, we cannot see the external source at 
all times.}
\end{figure}

\bibliographystyle{plain}
\bibliography{refs}

\end{document}